\newtheorem{theorem}{Theorem}[section]
\newtheorem{corollary}[theorem]{Corollary}
\newtheorem{lemma}[theorem]{Lemma}
\newtheorem{proposition}[theorem]{Proposition}
\newtheorem{example}[theorem]{Example}
\theoremstyle{definition}
\newtheorem{definition}[theorem]{Definition}
\theoremstyle{remark}
\newtheorem{remark}[theorem]{Remark}
\numberwithin{equation}{section}
\newcommand{\K}{\mathbb K}
\newcommand{\A}{\mathcal{A}}
\newcommand{\HH}{\mathcal{H}}
\newcommand\F{\mathbb{F} }
\newcommand\Z{\mathbb{Z} }
\newcommand\g{\mathfrak{g} }
\newcommand\ps{ \partial_\sigma }
\newcommand{\sll}{\mathfrak{sl}_2(\K)}
\newcommand\cs{\circlearrowleft }
\DeclareMathOperator{\id}{id}
\DeclareMathOperator{\Ann}{Ann}
\DeclareMathOperator{\GCD}{gcd}\DeclareMathOperator{\sign}{sign}
\DeclareMathOperator{\Der}{Der}
\DeclareMathOperator{\Lin}{\mathcal{L}}\DeclareMathOperator{\linSpan}{LinSpan}
\begin{document}

\title[Hom-Algebras and  Hom-Coalgebras]{Hom-Algebras and  Hom-Coalgebras}

\author{Abdenacer  Makhlouf \and Sergei   Silvestrov}
\address{Abdenacer  Makhlouf, Universit\'{e} de Haute Alsace,  Laboratoire de Math\'{e}matiques, \newline Informatique et Applications, \\
4, rue des Fr\`{e}res Lumi\`{e}re  F-68093 Mulhouse, France} \email{Abdenacer.Makhlouf@uha.fr}

\address{ Sergei Silvestrov,
Centre for Mathematical Sciences,  Lund University, Box
118, \newline SE-221 00 Lund, Sweden}
\email{sergei.silvestrov@math.lth.se}

\date{December 27, 2007}

\keywords{Hom-Lie algebra, Hom-associative
algebra, Hom-coalgebra,  Hom-bialgebra, Hom-Hopf algebra, Hom-Lie
admissible Hom-coalgebra.}

\thanks{2000 \textit{Mathematics Subject Classification} 16W40 \and 17D25.}

\thanks{This work was supported by the Crafoord Foundation,
The Royal Swedish Academy of Sciences, The Swedish Research Council,
The Swedish Links program of SIDA Foundation and Swedish Research Council,
The Swedish Foundation of International Cooperation in Research and High Education (STINT),
The Royal Physiographic Society in Lund, University of Mulhouse and Lund University, and the European network
Liegrits}

\maketitle
\begin{abstract}
The aim of this paper is to develop the theory of Hom-coalgebras and related structures.
After reviewing some key constructions and examples of
quasi-deform\-ations
of Lie algebras involving twisted derivations and giving rise to the
class of
quasi-Lie algebras incorporating Hom-Lie algebras, we describe the
notion and some properties of Hom-algebras
and provide examples.
We introduce Hom-coalgebra
structures, leading to the notions of Hom-bialgebra
and Hom-Hopf algebras, and prove some fundamental properties and give examples.
Finally, we define the concept of Hom-Lie
admissible Hom-coalgebra
and provide their classification based on subgroups of the symmetric
group.
\end{abstract}


\section{Introduction}

In \cite{HLS,LS1,LS2}, the authors have developed a general quasi-deformation
scheme for Lie algebras of vector fields based on use of general $\sigma$-derivations, and shown that this natural quasi-deformation scheme leads to a new broad class of non-associative algebras with twisted six terms generalized Jacobi identities instead of the usual Jacobi identities of Lie algebras. The main initial motivation for this investigation was the goal of creating a unified general approach
to examples of $q$-deformations of Witt and Virasoro algebras constructed in 1990-1992 in pioneering works  \cite{AizawaSaito,ChaiElinPop,ChaiKuLukPopPresn,ChaiIsKuLuk,ChaiPopPres,CurtrZachos1,DaskaloyannisGendefVir,
Kassel1,LiuKeQin}, where in particular it was observed that in these examples some $q$-deformations of ordinary Lie algebra Jacobi identities hold.
Motivated by these and the new examples
arising as application of the general quasi-deformation construction of \cite{HLS,LS1,LS2}
on the one hand, and
the desire to be able to treat within the same framework such well-known generalizations of Lie algebras as the color and super Lie algebras on the other hand, quasi-Lie algebras and subclasses of quasi-Hom-Lie algebras
and Hom-Lie algebras were introduced in
\cite{HLS,LS1,LS2,LS3}.
In the subclass of
Hom-Lie algebras skew-symmetry is untwisted, whereas the
Jacobi identity is twisted by a single linear map and contains three
terms as in Lie algebras,
reducing to ordinary Lie algebras when the twisting linear map is the identity map.
The main feature of quasi-Lie algebras and quasi-Hom-Lie algebras is that
both the skew-symmetry and the Jacobi identity are
twisted by several deforming twisting maps and
also the Jacobi identity in quasi-Lie and
quasi-Hom-Lie algebras in general contains six
twisted triple bracket terms. The three terms twisted Jacobi identity of Hom-Lie algebras is obtained
when the six terms in Jacobi identity of the quasi-Lie or of the quasi-Hom-Lie algebras can be combined pairwise in a suitable way. That possibility depends deeply on how the twisting maps interact with each other and with the bracket multiplication.

Quasi-Lie algebras is a broad class of non-associative algebras, defined in such a way that it
encompasses Lie algebras, Lie superalgebras, color Lie
algebras as well as numerous deformed algebras arising in
connection with twisted, discrete or deformed
generalizations and modifications of derivatives and corresponding generalizations,
discrete versions of vector fields
and differential calculus.
It turns out that, among examples fitting within the framework of
quasi-Lie algebras and subclasses of quasi-Lie and Hom-Lie algebras
belong also various quantum deformations of Lie
algebras, such as deformations and quasi-deformations of the Heisenberg
Lie algebra, $\sll$, oscillator algebras and of
other finite-dimensional Lie algebras and
infinite-dimensional Lie algebras of Witt and
Virasoro type important in Physics within the
string theory, vertex operator models, quantum
scattering, lattice models and other contexts, as
well as various classes of quadratic and sub-quadratic algebras arising in connection to
non-commutative geometry, twisted derivations and
deformed difference operators and non-commutative differential calculi.
Many such examples of algebras, which can in fact be shown
to fit within the framework of quasi-Lie algebras in one or another way, can be found for instance in  \cite{AizawaSaito,ChaiElinPop,ChaiKuLukPopPresn,ChaiIsKuLuk,ChaiPopPres,CurtrZachos1,
DamKu,DaskaloyannisGendefVir,DelEtinQuantFieldStrings,DiFranMathiSen,FLM,Fuchs1,Fuchs2,HLS,HelSil-book,
Hu,Jakob,JakobLee,Kassel1,Kabook,LeBruyn,LeBruynSmith,LeBruynSmithvdBergh,LeBruynvdBergh,LiuKeQin,
FVOlaggeomassocalgbook} and in references cited therein.

In the paper \cite{MS}, we provided a different
way for constructing a subclass of quasi-Lie
algebras, the Hom-Lie algebras, by extending the
fundamental construction of Lie algebras from
associative algebras via commutator bracket
multiplication. To this end we defined the notion
of Hom-associative algebras generalizing
associative algebras to a situation where
associativity law is twisted by a linear map, and showed that the
commutator product defined using the
multiplication in a Hom-associative algebra leads
naturally to Hom-Lie algebras. We introduced also
Hom-Lie-admissible algebras and more general
$G$-Hom-associative algebras with subclasses of
Hom-Vinberg and pre-Hom-Lie algebras,
generalizing to the twisted situation
Lie-admissible algebras, $G$-associative
algebras, Vinberg and pre-Lie algebras
respectively, and shown that for these classes of
algebras the operation of taking commutator leads
to Hom-Lie algebras as well. We constructed also
all the twistings so that the brackets
$[x_1,x_2]=2 x_2, \ [x_1,x_3]=-2 x_3, \
[x_2,x_3]=x_1$ determine a three dimensional
Hom-Lie algebra. Finally, we provided for a
subclass of twistings, the list of all
three-dimensional Hom-Lie algebras. This list
contains all three-dimensional Lie algebras for
some values of structure constants.
The notion, constructions and properties of the
enveloping algebras of Hom-Lie algebras are yet
to be properly studied in full generality. An
important progress in this direction has been
made in the recent work by D. Yau
\cite{Yau:EnvLieAlg}.
Also, in this connection it may be appropriate to mention that
in \cite{Hu}, for so called their $q$-Lie algebras, which are a special subclass
of Hom-Lie algebras, a universal enveloping algebra has been defined,
Poincare-Birhkoff-Witt basis was constructed and corresponding Poincare-Birhkoff-Witt type theorem
has been proved in detail using the reduction system technique of Bergman's Diamond Lemma.
It is important challenging problem to develop further the proper notion and theory of
universal enveloping algebras for general Hom-algebras and for general
quasi-Hom-Lie and Quasi-Lie algebras.
The recent works by Hellstr{\"o}m \cite{HellstromGrInv,HellstromGDL}, pertaining to generalizations of Diamond Lemma and of reduction system technics to more general algebraic structures of operadic type, might be useful in this respect.
The fundamentals of the formal
deformation theory and associated cohomology
structures for Hom-Lie algebras have been
considered recently by the authors in
\cite{HomDeform}. Simultaneously, D. Yau has
developed elements of homology for Hom-Lie
algebras in \cite{Yau:HomolHomLie}. These
directions of future investigation promise to be
very fruitful.

Further development of the area requires a
broader insight in various new Hom-algebraic
structures, generalizing the corresponding key structures
from the context of associative and Lie algebras.
In the present paper we review some constructions
and examples of quasi-Lie and Hom-Lie algebras and
develop the coalgebra counterpart of the notions
and results of \cite{MS}, extending in particular
in the framework of Hom-associative, Hom-Lie
algebras and Hom-coalgebras, the notions and
results on associative and Lie admissible
coalgebras obtained in \cite{GR}. In this
context, we also define structures of
Hom-bialgebras, generalized Hom-bialgebras and Hom-Hopf algebras and describe
some of their properties extending properties of
bialgebras and Hopf algebras. More specifically,
in Section \ref{sec:qhlsigma} we recall the
definitions of quasi-Lie algebras and their
subclass of quasi-Hom-Lie algebras defined in
\cite{HLS,LS1,LS2,LS3}, and then review the
method of constructing the quasi-Hom-Lie algebras via
discrete modifications of vector fields using twisted
derivations, and describe two classes of examples of multi-parameter families of algebras arising as application of this method, the quasi-Lie quasi-deformations of $\sll$ on the algebra of
polynomials in nilpotent indeterminate and
quasi-Lie algebras generalizing Witt (centerless
Virasoro) algebras via discretizations by
$\sigma$-derivations with general endomorphism
$\sigma$ of the algebra of Laurent polynomials.
In Section \ref{sec:HomalgHomcoalg}, we summarize
the relevant definitions of Hom-associative
algebra, Hom-Lie algebra, Hom-Leibniz algebra and Hom-Poisson algebra. In
this section we define the notions and describe
some of basic properties of Hom-coalgebras,
Hom-bialgebras and Hom-Hopf algebras which
generalize the classical coalgebra, bialgebra and
Hopf algebra structures. We also define the module
and comodule structure over Hom-associative
algebra or Hom-coassociative coalgebra.  We also consider a generalization of bialgebra structure in the spirit of Loday \cite{Loday}, where the algebra in no longer unital and the coalgebra is no longer counital, and extend it to our context.  In \cite{Yau:HomolHomLie}, it is shown that starting from an algebra and algebra endomorphism, one can construct a Hom-associative algebra. We extend this result  to Hom-associative coalgebras and generalized Hom-bialgebras and use it to provide some examples. The structures of Hom-coalgebra and the two kinds of Hom-bialgebra structures were already introduced in \cite{Makhlouf-Hopf}. Recently, some developments on Hom-bialgebras were made in \cite{Yau:HomBial}.
In Section 4, we introduce the concept of Hom-Lie
admissible Hom-coalgebra, describe some useful
relations between comultiplication, opposite comultiplication,
the cocommutator defined as their difference, and
their $\beta$-twisted coassociators and
$\beta$-twisted co-Jacobi sums. We also introduce
the notion of $G$-Hom-coalgebra for any subgroup
$G$ of permutation group $S_3$. We show that
$G$-Hom-coalgebras are Hom-Lie admissible
Hom-coalgebras, and also establish duality based
correspondence between classes of
$G$-Hom-coalgebras and $G$-Hom-algebras.

\section{Quasi-Hom-Lie algebras associated
with $\sigma$-derivations}\label{sec:qhlsigma}
Throughout this paper $\K$ denotes a field of characteristic zero.

In this section we first recall the definitions
of quasi-Lie algebras and their subclass of
quasi-Hom-Lie algebras defined first in
\cite{HLS,LS1,LS2}, and then review the
construction of quasi-Hom-Lie algebras of
discretizations of vector fields by twisted
derivations.

Let  $\Lin_{\K}(V)$ be the set of linear maps of
the linear space $V$ over the field $\K$.

\begin{definition} (Larsson, Silvestrov \cite{LS2}) \label{def:quasiLiealg}
A \emph{quasi-Lie algebra} is a tuple
\mbox{$(V,[
\cdot,\cdot ]_V,\alpha,\beta,\omega,\theta)$}
where
\begin{itemize}
    \item $V$ is a linear space over $\K$;
    \item $[\cdot,\cdot]_V: V\times V\to V$ is a bilinear
      map called a product or bracket in $V$;
    \item $\alpha,\beta:V\to V$, are linear maps;
    \item $\omega:D_\omega\to \Lin_{\K}(V)$ and $\theta:D_\theta\to \Lin_{\K}(V)$
      are maps with domains of definition $D_\omega,
      D_\theta\subseteq V\times V$,
\end{itemize}
such that the following conditions hold:
\begin{itemize}
      \item ($\omega$-symmetry) The product satisfies a generalized skew-symmetry condition
        $$[ x,y]_V=\omega(x,y)[ y,x]_V,
        \quad\text{ for all } (x,y)\in D_\omega ;$$
\item (quasi-Jacobi identity) The bracket satisfies a generalized Jacobi identity
    $$\circlearrowleft_{x,y,z}\big\{\,\theta(z,x)\big([\alpha(x),[ y,z]_V]_V+
    \beta[ x,[ y,z]_V]_V\big)\big\}=0,$$
     for all $(z,x),(x,y),(y,z)\in D_\theta$ and  where
$\circlearrowleft_{x,y,z}$ denotes summation over
the cyclic permutation on $x,y,z$.
\end{itemize}
\end{definition}

Note that the twisting maps in the definition of
quasi-Lie algebras are not arbitrary. For example
the axioms of quasi-Lie algebra above imply some
properties like
$$(\omega(x,y)\omega(y,x)-\id)[
x,y]=0, \text{ if } (x,y), (y,x) \in D_\omega ,$$
which follows from the computation $$[
x,y]=\omega(x,y)[ y,x]
=\omega(x,y)\omega(y,x)[ x,y].$$

The class of algebras introduced in Definition
\ref{def:quasiLiealg} incorporates as special
cases \emph{Hom-Lie algebras} and
\emph{quasi-Hom-Lie algebras} which appear
naturally in the algebraic study of
$\sigma$-derivations and related deformations of
infinite-dimensional and finite-dimensional Lie
algebras. To get the class of quasi-Hom-Lie
algebras one specifies $\theta=\omega$ and
restricts to $\alpha$ and $\beta$ satisfying the
twisting condition $[
\alpha(x),\alpha(y)]=\beta\circ\alpha[
x,y]$.

\begin{definition} \label{def:qhlalg}
A \emph{quasi-Hom-Lie algebra} is a tuple
\mbox{$(V,[
\cdot,\cdot]_V,\alpha,\beta,\omega)$} where
\begin{itemize}
    \item $V$ is a $\K$-linear space,
    \item $[\cdot,\cdot]_V:V\times V\to V$ is a bilinear
      map called a \emph{product} or
    a \emph{bracket in $V$}
    \item $\alpha,\beta:V\to V$, are linear maps,
    \item $\omega:D_\omega\to \Lin_\K(V)$ is a map
    with domain of definition $D_\omega\subseteq
    V\times V$,
\end{itemize}
such that the following conditions hold:
\begin{itemize}
    \item ($\beta$-twisting.) The map $\alpha$ is a $\beta$-twisted
      algebra homomorphism,
      i.e. $$[\alpha(x),\alpha(y)]_V
      =\beta\circ\alpha[ x,y]_V,
\quad\text{ for all } x,y\in V;$$
    \item ($\omega$-symmetry.) The product satisfies a generalized skew-symmetry condition
        $$[ x,y]_V=\omega(x,y)
        [ y,x]_V,
        \quad\text{ for all } (x,y)\in D_\omega ;$$
    \item (quasi-Hom-Lie Jacobi identity.) The bracket satisfies a generalized Jacobi identity
    $$\circlearrowleft_{x,y,z}\Big\{\,\omega(z,x)
    \Big([\alpha(x),[ y,z]_V]_V+
    \beta[ x,[ y,z]_V]_V\Big)\Big\}=0,$$
     for all $(z,x),(x,y),(y,z)\in D_\omega$.
\end{itemize}
\end{definition}

Specifying in the definition of quasi-Lie
algebras $D_\omega =V \times V$, $\beta=\id_V$ and
$\theta(x,y)=\omega(x,y)=-\id_V$ for all $(x,y)\in D_\omega=D_\theta$, we get a subclass of Hom-Lie
algebras with twisting linear map $\alpha$, which is moreover an algebra homomorphism due to $\beta$-twisting axiom. This class of Hom-Lie algebras includes Lie algebras when $\alpha = \id$. We will come back to general Hom-Lie algebras in the forthcoming sections.

Important classes and examples of Quasi-Hom-Lie
algebras and Hom-Lie algebras are obtained using
a quasi-deformation procedure of discretizing
vector fields by twisted derivations.  In this
quasi-Lie deformation procedure we start with the
Lie algebra $\g$ we wish to deform, and let
$\rho: \g\to\Der(\A)\subseteq\mathfrak{gl}(\A)$
be a representation of $\g$ in terms of
derivations on some commutative, associative
algebra with unity. The Lie structure on
$\Der(\A)$ is given by the usual commutator
bracket between linear operators. The
quasi-deformation procedure changes first the
involved derivations to $\sigma$-derivations,
i.e., linear maps $\ps:\A\to\A$ satisfying a
generalized Leibniz rule:
$\ps(ab)=\ps(a)b+\sigma(a)\ps(b)$, for all
$a,b\in\A$, and for an algebra endomorphism
$\sigma$ on $\A$. The usual commutator of the
obtained $\sigma$-derivations might be in general
not a $\sigma$-derivation. Thus in the course of
this deformation we also deform the commutator to
a $\sigma$-twisted ($\sigma$-twisted) bracket
$[\cdot,\cdot]$. The new product
$[\cdot,\cdot]$ is defined and closed
on the left $\A$-submodules $\A\cdot\ps$ of
$\Der_\sigma(\A)$, for each choice of
$\ps\in\Der_\sigma(\A)$. This is the content of
\text{Theorem \ref{thm:GenWitt}} which
establishes also a canonical Jacobi-like relation
on $\A\cdot\ps$ for $[\cdot,\cdot]$,
reducing to the ordinary Jacobi identity when
$\sigma=\id$, i.e., in the "limit" case of this
deformation scheme corresponding to the Lie
algebra $\g$. We remark that in some cases, for
instance when $\A$ is a unique factorization
domain, $\A\cdot\ps=\Der_\sigma(\A)$ for suitable
$\ps\in\Der_\sigma(\A)$ (see \cite{HLS}). In this
scheme, we have two "deformation parameters"
namely, $\A$ and $\sigma$. Note, however, that
they are not independent. Indeed, $\sigma$
certainly depends on $\A$.
The algebra structure on $\A\cdot\ps$ is then
pulled back to an abstract algebra which is then
to be viewed as the quasi-deformed version of
$\g$. It might be so that one do not actually
retrieve the original $\g$ by performing
appropriate (depending on the case considered)
limit procedure. This is because for some
"values" of the involved parameters the
representation or specific operators might
collapse, and sometimes taking the limit might
even become meaningless in these circumstances.
This is why we choose to call our deformations
quasi-deformations. Another interesting phenomena
that arises is that the pull-back might "forget
relations". That is to say that the operators in
$\A\cdot\ps$ may satisfy relations, for instance
coming from the twisted Leibniz rules, that the
abstract algebra does not satisfy.

Let  $\A$ be a commutative, associative $\K$-algebra with unity
$1$. Furthermore $\sigma$ will denote an
endomorphism on $\A$. Then by a \emph{twisted
derivation} or \emph{$\sigma$-derivation} on $\A$
we mean an $\K$-linear map $\ps:\A\to\A$ such
that a $\sigma$-twisted Leibniz rule holds:
\begin{align}\label{eq:sigmaLeibniz}
\ps(ab)=\ps(a)b+\sigma(a)\ps(b).
\end{align}
The best known $\sigma$-derivations are
$(\partial\, a)(t)=a'(t)$, the ordinary
differential operator with the ordinary Leibniz
rule, i.e., $\sigma=\id$; and $(\ps\, a)(t)=(D_q
a)(t)$, the Jackson $q$-derivation operator with
$\sigma$-Leibniz rule $(D_q\, (ab))(t)=(D_q
a)(t)b(t)+a(qt)(D_q b)(t),$ where
$\sigma=\mathbf{t}_q$ and $\mathbf{t}_q
f(t):=f(qt).$ (See \cite{HelSil-book} and
references there.)

We let $\mathfrak{D}_\sigma(\A)$ denote the set of
$\sigma$-derivations on $\A$. Fixing a
homomorphism $\sigma:\A\to\A$, an element
$\ps\in\mathfrak{D}_\sigma(\A)$, and an element
$\delta\in\A$, we assume that these objects
satisfy the following two conditions:
\begin{align}\label{eq:GenWittCond1}
    &\sigma(\Ann(\ps))\subseteq \Ann(\ps),\\
    &\ps(\sigma(a)) = \delta\sigma(\ps(a)),\quad\text{for }a\in\A,\label{eq:GenWittCond2}
\end{align} where $\Ann(\ps):=\{a\in\A\,|\, a\cdot
\ps=0\}$. Let
$\A\cdot\ps=\{a\cdot\ps\;|\;a\in\A\}$ denote the
cyclic $\A$-submodule of $\mathfrak{D}_\sigma(\A)$
generated by $\ps$ and extend $\sigma$ to
$\A\cdot\ps$ by
$\sigma(a\cdot\ps)=\sigma(a)\cdot\ps$. The
following theorem, from \cite{HLS}, introducing
an $\K$-algebra structure on $\A\cdot\ps$ making
it a quasi-Hom-Lie algebra, provides a method for
construction of various classes and examples of
quasi-Lie algebras using twisted derivations.

\begin{theorem}[Hartwig, Larsson, Silvestrov \cite{HLS}]
\label{thm:GenWitt}
If $\sigma(\Ann(\ps))\subseteq \Ann(\ps)$, then
the map $[\cdot,\cdot]_\sigma$
defined by
\begin{align} \label{eq:GenWittProdDef}
    [ a\cdot\ps,b\cdot\ps]_\sigma=(\sigma(a)\cdot\ps)\circ(b\cdot\ps)-(\sigma(b)\cdot\ps)
    \circ(a\cdot\ps),
\end{align}for $a,b\in\A$ and where $\circ$ denotes composition of maps, is a well-defined
$\K$-algebra product on the $\K$-linear space
$\A\cdot\ps$. It satisfies the following
identities for $a,b,c\in\A${\rm :}
\begin{align}\label{eq:GenWittProdFormula}
    &[ a\cdot\ps,
    b\cdot\ps]_\sigma=(\sigma(a)\ps(b)-\sigma(b)\ps(a))\cdot\ps,\\
    &[ a\cdot\ps, b\cdot\ps]_\sigma=-[ b\cdot\ps,
    a\cdot\ps]_\sigma,\label{eq:GenWittSkew}
\end{align}
and if, in addition, {\rm
(\ref{eq:GenWittCond2})} holds, we have the
deformed six-term Jacobi identity
\begin{equation} \label{eq:GenWittJacobi}
    \cs_{a,b,c}\,\big([\sigma(a)\cdot\ps,[
    b\cdot\ps,c\cdot\ps
]_\sigma]_\sigma+\delta\cdot[
a\cdot\ps, [
b\cdot\ps,c\cdot\ps]_\sigma]_\sigma\big)=0.
\end{equation}
\end{theorem}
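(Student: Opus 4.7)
The plan is to verify first the explicit bracket formula (\ref{eq:GenWittProdFormula}), since it simultaneously establishes closure of $[\cdot,\cdot]_\sigma$ on $\A\cdot\ps$, the well-definedness of the definition, and makes the remaining identities tractable. Concretely, I would evaluate both compositions $(\sigma(a)\cdot\ps)\circ(b\cdot\ps)$ and $(\sigma(b)\cdot\ps)\circ(a\cdot\ps)$ on an arbitrary $c\in\A$ and expand using the $\sigma$-Leibniz rule (\ref{eq:sigmaLeibniz}):
\begin{equation*}
(\sigma(a)\cdot\ps)\circ(b\cdot\ps)(c) = \sigma(a)\ps(b)\ps(c) + \sigma(a)\sigma(b)\ps^2(c).
\end{equation*}
Subtracting the analogous expression with $a,b$ swapped, the second-order term cancels because $\A$ is commutative, leaving $(\sigma(a)\ps(b)-\sigma(b)\ps(a))\ps(c)$, which gives (\ref{eq:GenWittProdFormula}) and in particular shows that the result lies in $\A\cdot\ps$.

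For well-definedness, I would argue that replacing $a$ by $a+n$ with $n\in\Ann(\ps)$ does not affect (\ref{eq:GenWittProdDef}): the extra summand $(\sigma(n)\cdot\ps)\circ(b\cdot\ps)$ vanishes because (\ref{eq:GenWittCond1}) forces $\sigma(n)\in\Ann(\ps)$, and the summand $(\sigma(b)\cdot\ps)\circ(n\cdot\ps)$ vanishes trivially since $n\cdot\ps=0$. The same argument applies to shifts of $b$. The skew-symmetry identity (\ref{eq:GenWittSkew}) is immediate by inspection of (\ref{eq:GenWittProdFormula}).

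The substantial work is the six-term identity (\ref{eq:GenWittJacobi}). Using (\ref{eq:GenWittProdFormula}), the inner bracket is $[b\cdot\ps,c\cdot\ps]_\sigma=u(b,c)\cdot\ps$ with $u(b,c)=\sigma(b)\ps(c)-\sigma(c)\ps(b)$. Applying (\ref{eq:GenWittProdFormula}) again, the two summands under the cyclic symbol become $X_a\cdot\ps$ where
\begin{equation*}
X_a = \sigma^2(a)\ps(u(b,c)) - \sigma(u(b,c))\,\ps(\sigma(a)) + \delta\bigl(\sigma(a)\ps(u(b,c)) - \sigma(u(b,c))\ps(a)\bigr).
\end{equation*}
I would then use condition (\ref{eq:GenWittCond2}) to replace $\ps(\sigma(a))$ by $\delta\sigma(\ps(a))$ and expand $\ps(u(b,c))$ via the $\sigma$-Leibniz rule (again invoking (\ref{eq:GenWittCond2}) to pull $\ps$ through the $\sigma$'s appearing in $u(b,c)$). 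This produces four families of monomials in $\sigma^2(a),\sigma(a),\ps(a),\sigma(\ps(a))$ (and cyclic analogues in $b,c$), with coefficients that are polynomials in $\delta$ and in $\ps^i\sigma^j$ of $b$ and $c$.

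The hard and error-prone part is the final bookkeeping: one checks that every monomial appearing in $X_a+X_b+X_c$ is matched by an oppositely-signed twin obtained by a different cyclic shift. The cancellations work because each term is of the shape $\,\sigma^p(x)\sigma^q(\ps^r(y))\sigma^s(\ps^t(z))\,$ with a coefficient $\pm\delta^k$, and the precise role of the twisting $\ps\sigma=\delta\sigma\ps$ is exactly to ensure that when one brings all $\ps$'s past all $\sigma$'s, the $\delta$-powers match between the two halves of the six-term sum; the commutativity of $\A$ handles the residual sign cancellations in the cyclic sum. Once all terms pair off, one concludes $\cs_{a,b,c} X_a=0$ in $\A$, hence (\ref{eq:GenWittJacobi}) holds after post-multiplication by $\ps$. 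The main obstacle is precisely controlling this combinatorial cancellation; the $\delta$ parameter and hypothesis (\ref{eq:GenWittCond2}) have been tuned so that the bookkeeping closes.
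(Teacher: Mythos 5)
The paper itself gives no proof of this theorem; it is imported verbatim from \cite{HLS}, so I can only assess your argument on its own terms. Your treatment of \eqref{eq:GenWittProdFormula}, of well-definedness via $\sigma(\Ann(\ps))\subseteq\Ann(\ps)$, and of skew-symmetry is correct and complete in outline.

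The gap is in the six-term identity, precisely at the step you flag as "bookkeeping." Your claim that every monomial in $X_a+X_b+X_c$ is matched by an oppositely-signed twin under a cyclic shift is false for the natural expansion. Concretely, writing $u=\sigma(b)\ps(c)-\sigma(c)\ps(b)$ and expanding $\ps(u)$ by the Leibniz rule with $\sigma(b)$ as the first factor gives $\ps(u)=\delta\sigma(\ps(b))\ps(c)+\sigma^2(b)\ps^2(c)-\delta\sigma(\ps(c))\ps(b)-\sigma^2(c)\ps^2(b)$, and then the cyclic sum of the $\delta\sigma(a)\ps(u)$ half leaves the residuals
\begin{equation*}
\delta\sum_{\cs}\bigl[\sigma(a)\sigma^2(b)-\sigma(b)\sigma^2(a)\bigr]\ps^2(c)
\quad\text{and}\quad
\delta^2\sum_{\cs}\bigl[\sigma(a)\sigma(\ps(b))\ps(c)-\sigma(b)\sigma(\ps(a))\ps(c)\bigr],
\end{equation*}
neither of which vanishes monomial-by-monomial. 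They \emph{do} cancel against each other, but only after invoking the relation
\begin{equation*}
\bigl[\sigma(b)-\sigma^2(b)\bigr]\ps^2(c)=\delta\,\sigma(\ps(b))\bigl[\ps(c)-\sigma(\ps(c))\bigr],
\end{equation*}
which is obtained by applying the $\sigma$-Leibniz rule to $\sigma(b)\ps(c)=\ps(c)\sigma(b)$ in both orders and using \eqref{eq:GenWittCond2}. Equivalently, one can make the pure monomial-matching work, but only by expanding $\ps(u)$ \emph{differently} in the two halves of $X_a$: keep the expansion above for the $\sigma^2(a)\ps(u)$ term, but use $\ps(u)=\sigma(b)\ps^2(c)-\sigma(c)\ps^2(b)$ (the expansion with $\ps(c)$ as first factor, in which the $\delta$-terms cancel internally) for the $\delta\sigma(a)\ps(u)$ term. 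Your sketch presupposes a single canonical expansion and asserts that cyclic shifts plus commutativity finish the job; that assertion is where the proof as written would fail, and the missing ingredient is exactly this two-sided use of the Leibniz rule. The rest of your setup (reduction to $\cs_{a,b,c}X_a=0$ in $\A$, the role of $\ps\circ\sigma=\delta\,\sigma\circ\ps$) is correct.
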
The algebra $\A\cdot\ps$ in the theorem is then a
quasi-Hom-Lie-algebra with $\alpha=\sigma$,
$\beta=\delta$ and $\omega=-\id_{\A\cdot\ps}$.

As example of application of the method in
Theorem \ref{thm:GenWitt} we  review the results
in \cite{LS3,LarsSilvSig-1-sl2} concerned with
this quasi-deformation scheme when applied to the
simple Lie algebra $\sll$.

The Lie algebra $\sll$ can be realized as a
vector space generated by elements $H$, $E$ and
$F$ with the bilinear bracket product defined by
the relations
\begin{align}\label{eq:sl2}
    [ H,E] = 2E, \qquad[ H,F] = -2F,
    \qquad[ E,F] = H.
\end{align}Our basic starting point is the following representation of $\sll$
in terms of first order differential operators
acting on some vector space of functions in the
\text{variable $t$}:
\begin{align*}
    E &\mapsto \partial,\qquad H\mapsto -2t\partial,\qquad F \mapsto
    -t^2\partial.
\end{align*}
To quasi-deform $\sll$ means that we firstly
replace $\partial$ by $\ps$ in this
representation. At our disposal are now the
deformation parameters $\A$ (the "algebra of
functions") and the endomorphism $\sigma$. After
computing the bracket on $\A\cdot\ps$ by
\text{Theorem \ref{thm:GenWitt}} the relations in
the quasi-Lie deformation are obtained by pull
back.

Let $\A$ be a commutative, associative
$\K$-algebra with unity $1$, $t$ an element of
$\A$, and let $\sigma$ denote an $\K$-algebra
endomorphism on $\A$. Also, let $\mathfrak{D}_\sigma(\A)$
denote the linear space of $\sigma$-derivations
on $\A$. Choose an element $\ps$ of
$\mathfrak{D}_\sigma(\A)$ and consider the $\K$-subspace
$\A\cdot\ps$ of elements on the form $a\cdot\ps$
for $a\in\A$. We will usually denote $a\cdot\ps$
simply by $a\ps$. Notice that $\A\cdot\ps$ is a
left $\A$-module, and by \text{Theorem
\ref{thm:GenWitt}} there is a skew-symmetric
algebra structure on $\A\cdot\ps$ given by
\begin{align}
[ a\cdot\ps,
b\cdot\ps]&=\sigma(a)\cdot\ps(b\cdot\ps)-
\sigma(b)\cdot\ps(a\cdot\ps) \notag \\
                           &=(\sigma(a)\ps(b)-\sigma(b)\ps(a))\cdot\ps,\label{eq:bracket}
\end{align}
where $a,b\in\A$. The elements $e:=\ps,
h:=-2t\ps$ and $f:=-t^2\ps$ span an
\text{$\K$-linear} subspace
$\mathcal{S}:=\linSpan_\K\{\ps,-2t\ps,-t^2\ps\}=\linSpan_\K\{e,h,f\}$
of $\A\cdot\ps$. We restrict the multiplication
(\ref{eq:bracket}) to $\mathcal{S}$ without, at
this point, assuming closure. Now, $\ps
(t^2)=\ps(t\cdot
t)=\sigma(t)\ps(t)+\ps(t)t=(\sigma(t)+t)\ps(t).$
Under the natural assumptions $\sigma(1)=1$ and
$\ps(1)=0$ (see \cite{LS2}),
(\ref{eq:bracket}) leads to to
\begin{subequations}
\begin{align}
  & [ h,f]=2\sigma(t)t\ps(t)\ps\label{eq:Shfsimp}\\
  & [ h,e] =2\ps(t)\ps\label{eq:Shesimp}\\
  & [ e,f] =-(\sigma(t)+t)\ps(t)\ps\label{eq:Sefsimp}.
\end{align}
\end{subequations}
\begin{remark}
    Note that when $\sigma=\id$ and $\ps(t)=1$ we retain the
    classical $\sll$ with relations {\rm (\ref{eq:sl2})}.
\end{remark}

In \cite{LS3}, we studied mostly some of the
algebras appearing in the quasi-deformation
scheme in the case when $\A=\K[t]$. This resulted
in new multi-parameter families of sub-quadratic
algebras in particular containing for special
choices of parameters known examples of Lie
algebras, color Lie algebras, and $q$-deformed
Lie algebras. One of such quasi-deformations of
$\sll$ is Jackson $\sll$ corresponding to
discretization of vector fields by Jackson
$q$-derivative and thus to a corresponding
special choice of deformation parameters defined
by the choice of $\A$, $\sigma$ and $\ps$. But we
also have constructed there quasi-Lie
deformations in the case $\A=\K[t]/(t^3)$
yielding new interesting unexpected parametric
families of algebras. In
\cite{LarsSilvSig-1-sl2}, we have constructed
quasi-Lie deformations when $\A=\K[t]/(t^4)$.
This case leads typically to six relations
instead of three which might have been thought as
natural as $\sll$ only has three relations. We
will now review an extension of the construction
to the general class of quasi-Lie deformations
when $\A=\K[t]/(t^N)$ (see also
\cite{LSSAGMFGoet}). We believe that this is a
new, very interesting and rich multi-parameter
family of sub-quadratic algebras that, when
studied from various points of view, may reveal
remarkable properties. It would be of interest to
determine the ring-theoretic properties of these
algebras e.g., for which values of the parameters
are they domains, noetherian, PBW-algebras,
regular \text{etc}, describe their center,
subalgebras, ideals and representations.

Let $\K$ include all $N^{\mathrm{th}}$-roots of
unity and  $\A$ be the algebra $\K[t]/(t^N)$ for
a positive integer $N\geq 2$. This is obviously
an $N$-dimensional $\K$-vector space and a
finitely generated $\K[t]$-module with basis
$\{1,t,\ldots,t^{N-1}\}$. For $i=0,\ldots,{N-1}$,
let $g_i=c_it^i\ps,\ c_i\neq 0$. Put
\begin{equation} \label{eq:pszsimaz}
    \ps(t)=p(t)=\sum_{k=0}^{N-1}p_kt^k, \qquad
    \sigma(t)=\sum_{k=0}^{N-1}q_kt^k,
\end{equation}
considering these as elements in the ring
$\K[t]/(t^N)$. The equalities (\ref{eq:pszsimaz})
have to be compatible with $t^N=0$. This means in
particular that (if $s(t)=(\sigma(t)-q_0)/t)$
\begin{align*}
    \sigma(t^N)&=\sigma(t)^N=(q_0+s(t)t)^N =
    \sum_{\nu=0}^N\binom{N}{\nu}q_0^{\nu}(s(t))^{N-\nu}t^{N-\nu}\\
    &=\sum_{\nu=1}^N\binom{N}{\nu}q_0^{\nu}(s(t))^{N-\nu}t^{N-\nu}
    =q_0\sum_{\nu=1}^N\binom{N}{\nu}q_0^{\nu-1}(s(t))^{N-\nu}t^{N-\nu}=0
\end{align*}
implying (and actually equivalent to) $q_0^N=0$
and hence $q_0=0$. Furthermore,
\begin{align}\label{eq:psz3}
\ps(t^N)&=\sum_{j=0}^{N-1}\sigma(t)^jt^{N-j-1}\ps(t)=
    p(t)\sum_{j=0}^{N-1}s(t)^jt^jt^{N-j-1} \notag \\
    &=p(t)t^{N-1}\sum_{j=0}^{N-1}s(t)^j=
    p_0t^{N-1}\sum_{j=0}^{N-1}s(t)^j=
    p_0\{N\}_{q_1}t^{N-1}=0
\end{align}
where $\{N\}_{q_1} = \sum_{j=0}^{N-1} q_1^{j}$.
It follows thus that
\begin{equation}\label{eq:defcond}
     (1+q_1+q_1^2+\ldots+q_1^{N-1})p_0=0.
\end{equation}
In other words, when $p_0\neq 0$ we generate
deformations at the zeros of the polynomial
$u^{N-1}+\ldots+u^2+u+1$, that is at N'th roots
of unity; whereas if $p_0=0$ then $q_1$ is a true
formal deformation parameter.

As before we make the assumptions that
$\sigma(1)=1$, $\ps(1)=0$ and so relations
(\ref{eq:Shfsimp}), (\ref{eq:Shesimp}) and
(\ref{eq:Sefsimp}) still hold. Moreover, since
for $k\ge 0$ we have $$
    \ps(t^{k+1})=\sum_{j=0}^k\sigma(t)^jt^{k-j}\ps(t)=
    p(t)t^k\sum_{j=0}^ks(t)^j.
$$ Using \eqref{eq:bracket}, we get
\begin{eqnarray}
    [ g_i,g_j] =c_ic_j[ t^i\ps,
    t^j\ps] = c_ic_j[\sigma(t^i)\ps(t^j)-\sigma(t^j)\ps(t^i)]\ps
    \nonumber\\
    =c_ic_j[\sigma(t)^i\ps(t^j)-\sigma(t)^j\ps(t^i)]
    \ps =c_ic_j[\ps(t^j)-\sigma(t)^{j-i}\ps(t^i)]
    \sigma(t)^i\ps.
    \label{eq:Shg}
\end{eqnarray}
By (\ref{eq:GenWittProdDef}) the bracket can be
computed abstractly on generators $g_i,g_j$ as
\begin{align}
    \label{eq:brack_hf}
  [ g_i,g_j]&=
  [ c_it^i\ps,c_jt^j\ps] = c_ic_j[(\sigma(t^i)\ps)\circ(t^j\ps)-(\sigma(t^j)\ps)\circ(t^i\ps)] \notag \\
         &= c_i\sigma(t)^i\ps\circ g_j-
         c_j\sigma(t)^j\ps\circ g_i.
    \end{align}
 Expanding according to the multinomial formula
 $s(t)^k = (q_1+q_2t+\ldots+q_{N-1}t^{N-2})^k$ and
 $\sigma(t^k)= \sigma(t)^k = t^k s(t)^k=
  (q_1+q_2t+\ldots+q_{N-1}t^{N-2})^k$
 we obtain
 \begin{align*}
  &[ g_i,g_j] =
  c_i(s(t)^it^i\ps)\circ g_j-c_j(s(t)^jt^j\ps)
  \circ g_i \\
         &= c_ii!\Big(\sum_{\substack{i_1,\ldots,i_{N-1}\ge 0 \\ i_1+\ldots+i_{N-1}=i \\
         i_2+2i_3+\ldots+(N-2)i_{N-1}<N-i}}
         \ \frac{q_1^{i_1}\cdots q_{N-1}^{i_{N-1}}}{i_1!\cdots i_{N-1}!}\
         \frac{g_{i+i_2+2i_3+\ldots+(N-2)i_{N-1}}\ g_j}{c_{i+i_2+2i_3+\ldots+(N-2)i_{N-1}}}\Big) \\
         &-c_jj!\Big(\sum_{\substack{j_1,\ldots,j_{N-1}\ge 0 \\ j_1+\ldots+j_{N-1}=j \\
         j_2+2j_3+\ldots+(N-2)j_{N-1}<N-j}}
         \ \frac{q_1^{j_1}\cdots q_{N-1}^{j_{N-1}}}{j_1!\cdots j_{N-1}!}\
         \frac{g_{j+j_2+2j_3+\ldots+(N-2)j_{N-1}}\ g_i}{c_{j+j_2+2j_3+\ldots+(N-2)j_{N-1}}}\Big).
    \end{align*}
The bracket is closed on linear span of $g_i$'s
as for $N-1\geq i,j \geq 0$, by
\eqref{eq:bracket}, we get
\begin{align}
    [ g_i,g_j]&=
c_ic_j[\ps(t^j)-\sigma(t)^{j-i}\ps(t^i)]
\sigma(t)^i\ps \notag \\
    &=c_ic_j\sum_{k=0}^{|j-i|-1} \sign(j-i)
    \sum_{\substack{k_1,k_2,\ldots,k_{N-1}\ge 0 \\
    k_1+k_2+\ldots+k_{N-1}=k+\min\{i,j\} \\
         k_2+2k_3+\ldots+(N-2)k_{N-1}<N}}
         \frac{(k+\min\{i,j\})!}{k_1!k_2!\cdots
         k_{N-1}!}\ \notag\\
         &\times q_1^{k_1}q_2^{k_2}\ldots
         q_{N-1}^{k_{N-1}}
         t^{k_2+2k_3+\ldots+(N-2)k_{N-1}}
         \sum_{l=0}^{N-1}p_lt^{i+j+l-1}\ps. \notag
\\
    &=c_ic_j\sum_{l=0}^{N-1}p_l\sum_{k=0}^{|j-i|-1}
    \sign(j-i) \sum_{\substack{k_1,k_2,\ldots,k_{N-1}\ge 0 \\ k_1+k_2+\ldots+k_{N-1}=i+k \\
         k_2+2k_3+\ldots+(N-2)k_{N-1}\le N-i-j-l}}
         \frac{(k+\min\{i,j\})!}{k_1!k_2!\cdots k_{N-1}!}\ \\
         &\times q_1^{k_1}q_2^{k_2}\ldots q_{N-1}^{k_{N-1}}\
         \frac{g_{i+j+l-1+k_2+2k_3+\ldots+(N-2)k_{N-1}}}{c_{i+j+l-1+k_2+2k_3+\ldots+(N-2)k_{N-1}}}.
\end{align}
where $\sign(x)= -1$ if $x<0$, $\sign(x)= 0$ if
$x=0$ and $\sign(x)= 1$ if $x>0$.

Now we turn to the other example, quasi-Lie
deformation of Witt algebra using
$\sigma$-derivations on ${\mathbb C}[t,t^{-1}]$,
where $\sigma$ is arbitrary endomorphism of the
algebra ${\mathbb C}[t^{\pm 1}]$. These algebras
were constructed by Hartwig, Larsson and
Silvestrov in \cite[Theorem 31]{HLS} as  an
outcome of application of Theorem
\ref{thm:GenWitt}. Let $\A=\K[t,t^{-1}]$, the
algebra of Laurent polynomials. Any endomorphism
of $\A=\K[t,t^{-1}]$ is uniquely determined by
its action on the generator $t$. So, assume that
$\sigma(t)=p(t)\neq 0 \in\A$. Note that
$\sigma(1)=1$ and $\sigma(t^{-1})=\sigma(t)^{-1}$
since $\A$ has no zero-divisors. Hence, since
$\sigma(t)$ is invertible, $\sigma(t)=p(t)=qt^s$,
for some $q\in\K\setminus\{0\}$ and $s\in\Z$.
Since $\A=\K[t,t^{-1}]$ is a unique factorization
domain, if $\sigma:\A\to\A$ is a homomorphism
different from the identity map $\id$ (either $q\neq
1$ or $s\neq 1$), then $\mathfrak{D}_\sigma(\A)=\A\cdot
D$, where $D=\frac{\id-\sigma}{g}$ and
$g=\GCD\big((\id-\sigma)(\A)\big)$, and the outer
twisting multiplier in 6 term Jacobi identity
\eqref{eq:GenWittJacobi} can be computed then as
$ \delta=\sigma(g)/g$ (see \cite{HLS}). It
suffices to compute a greatest common divisor of
$(\id-\sigma)(\A)$ on the generator $t$ since
$\sigma(t^{-1})$ is determined by $\sigma(t)$ and
any $\gcd$ is only determined up to a multiple of
an invertible element. Thus $g=\eta^{-1}
t^{k-1}(t-qt^{s})$ is a perfectly general $\gcd$
and $D=\eta t^{-k}\frac{\id-\sigma}{1-qt^{s-1}} $
is a generator for $\mathfrak{D}_\sigma(A)$ as a left
$\A$-module. The $\sigma$-derivations on
$\K[t,t^{-1}]$ are on the form $f(t)\cdot D$ for
$f\in\K[t,t^{-1}]$ and so, given that $t^{\Z}$ is
a linear basis of $\K[t,t^{-1}]$ (over $\K$),
$-t^{\Z}\cdot D$ is a linear basis (over $\K$
again) for $\mathfrak{D}_\sigma(\K[t,t^{-1}])$. By
Theorem \ref{thm:GenWitt}, the linear space
    $\mathfrak{D}_\sigma(\A) = \bigoplus_{n\in{\mathbb Z}}
    \K \cdot d_n,$
    where $d_n=-t^nD$, $n\in \mathbb{Z}$
can be equipped with the bilinear bracket product
defined on generators (by
\eqref{eq:GenWittProdDef}) as $ [
d_n,d_m] _\sigma =
q^nd_{ns}d_m-q^md_{ms}d_n $ and satisfying
defining commutation relations
$$\begin{array}{l}
{[ d_n,d_m]
_\sigma}=\alpha{\rm{sign}}(n-m) \sum_{l={\rm
min}(n,m)}^{{\rm{max}}(n,m)-1}q^{n+m-1-l}
            d_{s(n+m-1)-(k-1)-l(s-1)} \\
            \qquad\textrm{for } n,m\geq 0;\\
     {[ d_n,d_m]}_\sigma=\alpha\Big(
     \sum_{l=0}^{-m-1}q^{n+m+l}d_{(m+l)(s-1)+ns+m-k}
+\sum_{l=0}^{n-1}q^{m+l}d_{(s-1)l+n+ms-k}\Big)\\
           \qquad\textrm{for } n\geq 0, m<0;
\\
{[ d_n,d_m]}_\sigma=-\alpha\Big(
            \sum_{l_1=0}^{m-1}q^{n+l_1}
        d_{(s-1)l_1+m+ns-k}
       +\sum_{l_2=0}^{-n-1}q^{m+n+l_2}d_{(n+l_2)(s-1)+n+ms-k}\Big)\\
            \qquad\textrm{for } m\geq 0, n<0;  \\
           {[ d_n,d_m]}_\sigma=\alpha{\rm{sign}}(n-m)
           \sum_{l={\rm min}(-n,-m)}^{{\rm max}(-n,-m)-1}q^{n+m+l}d_{(m+n)s+(s-1)l-k}\\
            \qquad\textrm{for } n,m<0,
\end{array}$$
skew-symmetry
    $[d_n,d_m]_\sigma=-[d_m,d_n]_\sigma$ and
    a twisted Jacobi identity
 \begin{equation*}
        \circlearrowleft_{n,m,l}\,\Big (q^n\big [ d_{ns},[ d_m,d_l] _\sigma\big
 ] _\sigma+\delta\big
 [
 d_n[ d_m,d_l] _\sigma\big ] _\sigma
 \Big )=0,
  \end{equation*}
where
$\delta=q^kt^{k(s-1)}\sum_{r=0}^{s-1}(qt^{s-1})^r
\in \K[t,t^{-1}]$. We get a family of
quasi-Hom-Lie algebras deforming the Witt
algebra. Here the twisting map $\alpha$ is a
linear map on $\mathfrak{D}_\sigma(\A)$ sending $d_n$ to
$q^nd_{ns}$, the twisting map $\beta$ is a linear
map on $\mathfrak{D}_\sigma(\A)$ acting as the
multiplication by $\delta$ from the left, and
$\omega = \theta = -\id$

A Hom-Lie algebra is obtained when $\delta \in
\K$, and this can be achieved only when $s=1$,
that is, when $\sigma(t)=qt$ (See Example
\ref{ex:qWitt}). If $s=1$ then
$[d_n,d_m]_q=q^nd_nd_m-q^md_md_n$ which is the
bracket for the usual $q$-Witt algebra associated
to discretizations by Jackson $q$-derivative
\cite[Theorem 27]{HLS}), reducing further to the
usual commutator for Witt Lie algebra if $s=1$
and $q=1$. For further analysis of these
quasi-Lie algebras quasi-deformations of Witt algebra we
refer to \cite{RichardSil,RS1AGMFLund}.

\section{Hom-Algebra  and Hom-Coalgebra structures}
\label{sec:HomalgHomcoalg} The notions of
Hom-associative, Hom-Leibniz, and
Hom-Lie-admissible algebraic structures was
introduced in \cite{MS}, generalizing the well
known associative, Leibniz and Lie-admissible
algebras. The Hom-Poisson algebra was introduced in \cite{HomDeform}, it is suitable for deformation theory of commutative Hom-associative algebras.

By dualization of Hom-associative
algebra we define in the sequel  the Hom-coassociative coalgebra
structure.

\begin{definition}
A \emph{Hom-associative algebra} is a triple
$(V,\mu,\alpha)$ where $V$ is a $\K$-linear
space, $\mu : V\otimes V \rightarrow V$ is a
bilinear multiplication and $\alpha:V \rightarrow
V$ is a $\K$-linear space homomorphism satisfying
the Hom-associativity condition
\begin{equation}\label{Hom-ass}
\mu(\alpha(x)\otimes \mu (y\otimes z))= \mu (\mu
(x\otimes y)\otimes \alpha (z)).
\end{equation}
\end{definition}
The Hom-associativity condition \eqref{Hom-ass}
may be expressed by the following commutative
diagram.
$$
\begin{array}{ccc}
V\otimes V\otimes V & \stackrel{\mu \otimes
\alpha}{\longrightarrow } & V\otimes
V \\
\ \quad \left\downarrow ^{\alpha\otimes \mu
}\right. &  & \quad \left\downarrow
^\mu \right. \\
V\otimes V & \stackrel{\mu }{\longrightarrow } &
V
\end{array}
$$

The Hom-associative algebra is said to be \emph{unital} if there
exists a homomorphism $\eta :\K\rightarrow V$
such that the following diagrams are commutative

$$\begin{array}{ccccc} \K \otimes V &
\stackrel{\eta \otimes id}{\longrightarrow } &
V\otimes V &
\stackrel{id\otimes \eta }{\longleftarrow } & V\otimes \K \\
& \searrow ^{\cong } & \quad \left\downarrow ^\mu
\right. & \swarrow ^{\cong
} &  \\
&  & V &  &
\end{array}
$$

Let $\left( V,\mu ,\alpha \right) $ and $\left(
V^{\prime },\mu ^{\prime },\alpha^{\prime
}\right) $ be two Hom-associative algebras. A
linear map $f\ :V\rightarrow V^{\prime }$ is said
to be a
\emph{morphism of Hom-associative algebras} if%
$$  \mu ^{\prime }\circ (f\otimes f)=f\circ \mu
\text{,} \qquad f\circ \alpha=\alpha^{\prime
}\circ f \qquad
$$
and  $f\circ \eta =\eta ^{\prime }$ if the
Hom-algebras are unital with units $\eta$ and
$\eta^{\prime}$.


The tensor product of two Hom-associative
algebras $\left( V_1,\mu _1,\alpha_1 \right) $
and $\left( V_2,\mu_2,\alpha_2  \right) $ is
defined in an obvious way as the Hom-associative
algebra $\left( V_1\otimes V_2,\mu _1 \otimes
\mu_2,\alpha_1 \otimes \alpha_2  \right) $. If
$\eta_1$ and $\eta_2$ are the units of
these Hom-associative algebras, then the tensor
product is also unital with the unit
$\eta_1 \otimes\eta_2$.

\begin{example}\label{example1ass}
Let $\{x_1,x_2,x_3\}$  be a basis of a $3$-dimensional linear space
$V$ over $\K$. The following multiplication $\mu$ and linear map
$\alpha$ on $V$ define a Hom-associative algebra over $\K^3${\rm :}
$$
\begin{array}{ll}
\begin{array}{lll}
 \mu ( x_1,x_1)&=& a x_1, \ \\
\mu ( x_1,x_2)&=&\mu ( x_2,x_1)=a x_2,\\
\mu ( x_1,x_3)&=&\mu ( x_3,x_1)=b x_3,\\
 \end{array}
 & \quad
 \begin{array}{lll}
\mu ( x_2,x_2)&=& a x_2, \ \\
\mu ( x_2, x_3)&=& b x_3, \ \\
\mu ( x_3,x_2)&=& \mu ( x_3,x_3)=0.
  \end{array}
\end{array}
$$

$$  \alpha (x_1)= a x_1, \quad
 \alpha (x_2) =a x_2 , \quad
   \alpha (x_3)=b x_3
$$
where $a,b$ are parameters in $\K$. This algebra
is not associative when $a\neq b$ and $b\neq 0$,
since
$$\mu (\mu (x_1,x_1),x_3))- \mu ( x_1,\mu
(x_1,x_3))=(a-b)b x_3.$$

\end{example}

In \cite{Yau:HomolHomLie}, D. Yau shows that one can construct a Hom-associative algebra starting from an associative algebra and an algebra endomorphism. Therefore using the following theorem, one can provide examples of Hom-associative algebras.
\begin{theorem}[\cite{Yau:HomolHomLie}]\label{thmYauConstrHomAss}
Let $(V,\mu)$ be an associative algebra and let $\alpha : V\rightarrow V$ be an algebra endomorphism. Then $(V,\mu_\alpha,\alpha)$, where $\mu_\alpha=\alpha\circ\mu$,  is a Hom-associative algebra.

Moreover, suppose that  $(V',\mu')$ is another associative algebra and  $\alpha ' : V'\rightarrow V'$ is an algebra endomorphism. If $f:V\rightarrow V'$ is an algebra morphism that satisfies $f\circ\alpha=\alpha'\circ f$ then
$$f:(V,\mu_\alpha,\alpha)\longrightarrow (V',\mu'_{\alpha '},\alpha ')
$$
is a morphism of Hom-associative algebras.
\end{theorem}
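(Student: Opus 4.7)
The plan is to verify the Hom-associativity condition \eqref{Hom-ass} for $\mu_\alpha = \alpha \circ \mu$ by a direct computation that exploits two ingredients: the associativity of $\mu$ and the fact that $\alpha$ is an algebra endomorphism, i.e.\ $\alpha \circ \mu = \mu \circ (\alpha \otimes \alpha)$. I would begin by expanding the left-hand side $\mu_\alpha(\alpha(x) \otimes \mu_\alpha(y \otimes z))$. Unwinding the definition gives $\alpha\bigl(\mu(\alpha(x), \alpha(\mu(y,z)))\bigr)$, and pulling one copy of $\alpha$ outside via the endomorphism property collapses this to $\alpha^2\bigl(\mu(x, \mu(y,z))\bigr)$.

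Next I would treat the right-hand side $\mu_\alpha(\mu_\alpha(x \otimes y) \otimes \alpha(z))$ symmetrically, obtaining $\alpha\bigl(\mu(\alpha(\mu(x,y)), \alpha(z))\bigr) = \alpha^2\bigl(\mu(\mu(x,y), z)\bigr)$. The two expressions agree by the (ordinary) associativity of $\mu$, so \eqref{Hom-ass} holds for $\mu_\alpha$. This is the entirety of the first assertion; there is no real obstacle since the endomorphism property of $\alpha$ is exactly what converts the outer $\alpha$'s introduced by $\mu_\alpha$ into a clean $\alpha^2$ that can be factored out before invoking associativity.

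For the second assertion I would verify the two conditions characterizing a morphism of Hom-associative algebras. The compatibility $f \circ \alpha = \alpha' \circ f$ is assumed. For multiplicativity it suffices to check
\begin{equation*}
\mu'_{\alpha'} \circ (f \otimes f) = f \circ \mu_\alpha,
\end{equation*}
which I would verify on a pure tensor: $\mu'_{\alpha'}(f(x) \otimes f(y)) = \alpha'(\mu'(f(x), f(y))) = \alpha'(f(\mu(x,y))) = f(\alpha(\mu(x,y))) = f(\mu_\alpha(x,y))$, using successively the definition of $\mu'_{\alpha'}$, the fact that $f$ is an algebra morphism from $(V,\mu)$ to $(V',\mu')$, the hypothesis $\alpha' \circ f = f \circ \alpha$, and finally the definition of $\mu_\alpha$. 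This completes the verification; no step is genuinely difficult, the main care being simply to keep track of where $\alpha$ versus $\alpha'$ appears and to apply the endomorphism/intertwining identities in the correct order.
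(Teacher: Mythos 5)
Your proof is correct and follows essentially the same direct computation as the paper: both arguments combine the endomorphism identity $\alpha\circ\mu=\mu\circ(\alpha\otimes\alpha)$ with the associativity of $\mu$, the only cosmetic difference being that you factor an $\alpha^2$ out of both sides before invoking associativity, whereas the paper applies associativity to the elements $\alpha(x),\alpha(y),\alpha(z)$ directly. Your explicit verification of the morphism claim (which the paper dismisses as ``proved similarly'') is also the intended computation.
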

\begin{proof}
We show that $(V,\mu_\alpha,\alpha)$ satisfies the Hom-associativity. Indeed
\begin{align*}
\mu_\alpha(\alpha(x)\otimes \mu_\alpha (y\otimes z))&=\alpha(\mu(\alpha(x)\otimes \alpha(\mu (y\otimes z)))\\
&=\alpha(\mu(\alpha(x)\otimes \mu ( \alpha (y)\otimes  \alpha (z)))\\
&=\alpha(\mu( \mu (\alpha(x)\otimes \alpha (y))\otimes  \alpha (z)))\\
&=\alpha(\mu( \alpha(\mu (x\otimes y))\otimes  \alpha (z)))\\
&= \mu_\alpha (\mu_\alpha
(x\otimes y)\otimes \alpha (z))
\end{align*}
The second assertion is proved similarly.
\end{proof}

The free Hom-associative algebra was constructed  in \cite{Yau:EnvLieAlg}. In the following,
we give  some identities satisfied by a
Hom-associative algebra $(V,\cdot,\alpha)$ and a pentagonal diagram under certain assumptions.
\begin{lemma}
Let  $(V,\cdot,\alpha)$ be a Hom-associative algebra. Then, for  $n\geq 2$, $ x_0,x_1,\cdots,x_n\in V$ the following is true
\begin{eqnarray*}
\alpha^{n-1}(x_0)\cdot (\alpha^{n-2} (x_1)\cdot (\alpha^{n-3} (x_2)\cdots( \alpha(x_{n-2})\cdot (x_{n-1}\cdot x_{n}))\cdots))=\quad \quad\quad \\
((\cdots((x_0\cdot x_1)\cdot\alpha (x_2))\cdots\alpha^{n-3} (x_{n-2}))\cdot\alpha^{n-2} (x_{n-1}))\cdot\alpha^{n-1} (x_n)
\end{eqnarray*}
\end{lemma}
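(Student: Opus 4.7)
The plan is to proceed by induction on $n$. The base case $n=2$ reduces to the Hom-associativity axiom \eqref{Hom-ass} applied to $x_0,x_1,x_2$, since then the two sides of the lemma are precisely the two sides of that axiom. For the inductive step, denote by $R_k(y_0,\ldots,y_k)$ the right-nested expression on the left-hand side of the lemma and by $L_k(y_0,\ldots,y_k)$ the left-nested expression on the right-hand side, so the lemma reads $R_n = L_n$. These satisfy the obvious recursive identities
\begin{equation*}
R_n(x_0,\ldots,x_n) = \alpha^{n-1}(x_0)\cdot R_{n-1}(x_1,\ldots,x_n),\qquad
L_n(x_0,\ldots,x_n) = L_{n-1}(x_0,\ldots,x_{n-1})\cdot \alpha^{n-1}(x_n),
\end{equation*}
which follow immediately from their definitions.

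We use strong induction, assuming $R_k=L_k$ for all $k$ with $2\le k<n$. Starting from $R_n(x_0,\ldots,x_n)=\alpha^{n-1}(x_0)\cdot R_{n-1}(x_1,\ldots,x_n)$, the inductive hypothesis at arity $n-1$ rewrites the inner right-comb as $L_{n-1}(x_1,\ldots,x_n)$, and unfolding the recursive form of $L_{n-1}$ peels off its last factor to give $L_{n-2}(x_1,\ldots,x_{n-1})\cdot \alpha^{n-2}(x_n)$. The whole expression then has the shape $\alpha(u)\cdot(b\cdot c)$ required by \eqref{Hom-ass} with $u=\alpha^{n-2}(x_0)$, $b=L_{n-2}(x_1,\ldots,x_{n-1})$ and $c=\alpha^{n-2}(x_n)$, so a single application of Hom-associativity converts it into
\begin{equation*}
\bigl(\alpha^{n-2}(x_0)\cdot L_{n-2}(x_1,\ldots,x_{n-1})\bigr)\cdot \alpha^{n-1}(x_n).
\end{equation*}

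To finish, the left factor must be identified with $L_{n-1}(x_0,\ldots,x_{n-1})$, after which the recursive form of $L_n$ produces the desired right-hand side. Applying the inductive hypothesis at arity $n-2$ in reverse replaces $L_{n-2}(x_1,\ldots,x_{n-1})$ by $R_{n-2}(x_1,\ldots,x_{n-1})$; but then $\alpha^{n-2}(x_0)\cdot R_{n-2}(x_1,\ldots,x_{n-1})$ is, by the very definition of the right-comb, equal to $R_{n-1}(x_0,\ldots,x_{n-1})$. A final application of the inductive hypothesis at arity $n-1$ identifies this with $L_{n-1}(x_0,\ldots,x_{n-1})$, closing the induction.

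The main obstacle is the bookkeeping: a single inductive step invokes the hypothesis at two distinct arities ($n-1$ twice and $n-2$ once), and the Hom-associativity axiom is only usable after the inner right-comb has been temporarily converted to a left-comb so that its rightmost factor $\alpha^{n-2}(x_n)$ is exposed. Because \eqref{Hom-ass} is purely local, moving parentheses across only one triple at a time, strong induction is the cleanest device; a direct combinatorial reassociation would require iterating the axiom many times while carefully tracking the growing powers of $\alpha$, whereas the inductive framing above needs only one application of \eqref{Hom-ass} per step.
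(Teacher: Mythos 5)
Your proof is correct and follows essentially the same route as the paper --- induction on $n$ driven by a single application of the Hom-associativity axiom per step --- except that the paper only computes the first nontrivial case explicitly and dismisses the rest with ``similarly, by induction,'' whereas you carry the induction out in full. One cosmetic point: at $n=3$ your step invokes the hypothesis at arity $n-2=1$, which lies outside your stated range $2\le k<n$, but this is harmless once one adopts the natural convention $R_1(y_0,y_1)=L_1(y_0,y_1)=y_0\cdot y_1$.
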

\begin{proof}
The case $n=1$ corresponds to the associativity condition. For $n=2$, we have
\begin{align*}
\alpha^2(x_0)\cdot (\alpha (x_1)\cdot( x_2\cdot x_3))&=\alpha^2(x_0)\cdot ((x_1\cdot x_2)\cdot \alpha (x_3))\\
&=(\alpha(x_0)\cdot (x_1\cdot x_2))\cdot \alpha^2 (x_3)\\
&=((x_0\cdot x_1)\cdot \alpha(x_2))\cdot \alpha^2 (x_3)\\
\end{align*}
Similarly, one may obtain the complete proof by induction on $n$.
\end{proof}
\begin{remark}
If the homomorphism $\alpha$ is invertible, the previous identity is equivalent to
\begin{eqnarray*}
x_0\cdot ( x_1\cdot (x_2\cdots( x_{n-2}\cdot (x_{n-1}\cdot x_{n}))\cdots))=\quad \quad\quad \quad \quad\quad\\
((\cdots((\alpha^{1-n} (x_0)\cdot \alpha^{2-n} (x_1))\cdot\alpha^{4-n} (x_2))\cdots\alpha^{n-4} (x_{n-2}))\cdot\alpha^{n-2} (x_{n-1}))\cdot\alpha^{n-1} (x_n)
\end{eqnarray*}
In particular,
\begin{align*}
x_0\cdot ( x_1\cdot x_2))&=(\alpha^{-1} (x_0)\cdot x_1)\cdot\alpha (x_2))\\
x_0\cdot ( x_1\cdot( x_2\cdot x_3))&=((\alpha^{-2} (x_0)\cdot \alpha^{-1}(x_1))\cdot\alpha (x_2))\cdot\alpha^{2}(x_3)\\
x_0\cdot ( x_1\cdot( x_2\cdot( x_3\cdot x_4)))&=((\alpha^{-3} (x_0)\cdot \alpha^{-2}(x_1))\cdot x_2)\cdot\alpha^{2}(x_3))\cdot\alpha^{3}(x_4)
\end{align*}
\end{remark}

To construct a pentagonal diagram, similar to Mac Lane pentagon, we  start with $\alpha^2(x_0)\cdot (\alpha (x_1)\cdot( x_2\cdot x_3))$ and by  Hom-associativity we have
\begin{align*}
\alpha^2(x_0)\cdot (\alpha (x_1)\cdot( x_2\cdot x_3))&=\alpha^2(x_0)\cdot ((x_1\cdot x_2)\cdot \alpha (x_3))\\
&=(\alpha(x_0)\cdot (x_1\cdot x_2))\cdot \alpha^2 (x_3)\\
&=((x_0\cdot x_1)\cdot \alpha(x_2))\cdot \alpha^2 (x_3)\\
&=( \alpha(x_0\cdot x_1)\cdot( \alpha(x_2)\cdot \alpha (x_3))\\
\end{align*}
In another hand
\begin{align*}
\alpha^2(x_0)\cdot (\alpha (x_1)\cdot( x_2\cdot x_3))&=(\alpha(x_0)\cdot \alpha(x_1))\cdot \alpha ( x_2\cdot x_3)
\end{align*}
Therefore
$$\alpha (x_0\cdot x_1)\cdot (\alpha (x_2)\cdot \alpha(x_3))  =  (\alpha(x_0)\cdot
\alpha(x_1))\cdot \alpha (x_2 \cdot x_3)
$$
Then, if $\alpha$ is an algebra  homomorphism, that is $\alpha(x \cdot y)=\alpha(x)\cdot \alpha(y)$, the previous identity is satisfied and therefore we obtain the following commutative pentagonal diagram

$$
\begin{array}{ccccc}
\ &\alpha^2(x_0)\cdot (\alpha (x_1)\cdot( x_2\cdot x_3)) &\ & \ \\
  \swarrow &  \ &  \searrow \\
\alpha^2(x_0)\cdot ((x_1\cdot x_2)\cdot \alpha (x_3))  & \   & (\alpha(x_0)\cdot \alpha(x_1))\cdot( \alpha ( x_2)\cdot \alpha( x_3) )\\
 \downarrow& \   &\uparrow\\
(\alpha(x_0)\cdot (x_1\cdot x_2))\cdot \alpha^2 (x_3) &\longrightarrow& ((x_0\cdot x_1)\cdot \alpha(x_2))\cdot \alpha^2 (x_3) & \
\end{array}
$$
\vspace{0.5cm}

The Hom-Lie algebras which is a particular case of quasi-Lie and quasi-Hom-Lie algebras is defined as follows.
\begin{definition} \label{def:HomLie}
A \emph{Hom-Lie algebra} is a triple $(V, [\cdot,
\cdot], \alpha)$ consisting of
 a linear space $V$, bilinear map $[\cdot, \cdot]: V\times V \rightarrow V$ and
 a linear space homomorphism $\alpha: V \rightarrow V$
 satisfying
$$\begin{array}{c} [x,y]=-[y,x] \quad {\text{(skew-symmetry)}} \\{}
\circlearrowleft_{x,y,z}{[\alpha(x),[y,z]]}=0
\quad {\text{(Hom-Jacobi condition)}}
\end{array}$$
for all $x, y, z$ from $V$.
\end{definition}

\begin{example}
Let $\{x_1,x_2,x_3\}$  be a basis of a $3$-dimensional linear space
$V$ over $\K$. The following bracket and   linear map $\alpha$ on
$V$ define a Hom-Lie algebra over $\K^3${\rm :}
$$
\begin{array}{cc}
\begin{array}{ccc}
 [ x_1, x_2 ] &= &a x_1 +b x_3 \\ {}
 [x_1, x_3 ]&=& c x_2  \\ {}
 [ x_2,x_3 ] & = & d x_1+2 a x_3,
 \end{array}
 & \quad

  \begin{array}{ccc}
  \alpha (x_1)&=&x_1 \\
 \alpha (x_2)&=&2 x_2 \\
   \alpha (x_3)&=&2 x_3
  \end{array}

\end{array}
$$
with $[ x_2, x_1 ]$, $[x_3, x_1 ]$ and  $[
x_3,x_2 ]$ defined via skewsymmetry. It is not a
Lie algebra if and only if $a\neq0$ and $c\neq0$,
since
$$[x_1,[x_2,x_3]]+[x_3,[x_1,x_2]]
+[x_2,[x_3,x_1]]= a c x_2.$$
\end{example}

We call a triple $\left( V,\mu ,\alpha \right) $
\emph{Hom-Lie admissible algebra} if the commutator
defined for $x,y\in V  $ by $[ x,y ]=\mu
(x,y)-\mu (y,x ) $ defines a Hom-Lie algebra $(V,
[\cdot, \cdot], \alpha)$. The Hom-Lie admissible
algebras were studied in \cite{MS}, where we have
shown among other things, that the
Hom-associative algebras are Hom-Lie admissible,
thus generalizing the well known fact that the
commutator on associative algebra defines a Lie
algebra structure.

\begin{proposition}
To any Hom-associative algebra defined by the multiplication $\mu$
and a homomorphism $\alpha$ over a $\K$-linear space $V$, one may
associate a  Hom-Lie algebra defined for all $x,y \in V$ by the
bracket
$
[ x,y ]=\mu (x,y)-\mu (y,x ).
$
\end{proposition}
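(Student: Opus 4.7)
The proof splits into verifying the two axioms of Definition \ref{def:HomLie} for the commutator bracket $[x,y] = \mu(x,y) - \mu(y,x)$. Skew-symmetry is immediate from the definition: swapping $x$ and $y$ negates the bracket, so $[x,y] = -[y,x]$ with no further work needed.

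The substantive step is the Hom-Jacobi identity $\circlearrowleft_{x,y,z}[\alpha(x),[y,z]] = 0$. My plan is to expand the inner bracket and then the outer one by bilinearity of $\mu$:
\begin{align*}
[\alpha(x),[y,z]] &= \mu(\alpha(x),\mu(y,z)) - \mu(\alpha(x),\mu(z,y)) \\
 &\quad - \mu(\mu(y,z),\alpha(x)) + \mu(\mu(z,y),\alpha(x)).
\end{align*}
Summing over cyclic permutations of $(x,y,z)$ produces twelve terms. I would then apply the Hom-associativity identity \eqref{Hom-ass} to each of them to convert every occurrence of $\mu(\alpha(\cdot),\mu(\cdot,\cdot))$ into the form $\mu(\mu(\cdot,\cdot),\alpha(\cdot))$ (or perform the reverse conversion, whichever is convenient) so that all twelve terms live in one common normal form.

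After this normalization, the expected cancellation is pairwise: each term of the form $\mu(\mu(a,b),\alpha(c))$ that appears with a $+$ sign in one cyclic summand will appear with a $-$ sign in another cyclic summand, because the four signed terms in $[\alpha(x),[y,z]]$ already exhibit the commutator pattern and the cyclic sum redistributes them so that each ordered product $\mu(\mu(\cdot,\cdot),\alpha(\cdot))$ enters exactly twice with opposite signs. This is the same combinatorial bookkeeping as in the classical proof that a commutator on an associative algebra satisfies the Jacobi identity, and Hom-associativity is tailored precisely so the argument goes through unchanged with $\alpha$ in place of an untwisted third factor.

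The only real obstacle is the bookkeeping of twelve terms; there is no conceptual hurdle because Hom-associativity plays exactly the role that associativity plays in the classical commutator-Jacobi proof. A cleaner presentation, which I would adopt in the write-up, is to group the twelve terms into three groups of four indexed by the cyclic position of the ``inner'' pair, apply Hom-associativity within each group, and then observe that the resulting expression is manifestly a cyclic antisymmetrization that vanishes.
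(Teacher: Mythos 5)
Your proposal is correct and follows the same route the paper itself uses (the twelve-term expansion of the cyclic sum, conversion via Hom-associativity $\mu(\alpha(a)\otimes\mu(b\otimes c))=\mu(\mu(a\otimes b)\otimes\alpha(c))$, and pairwise cancellation), which the paper carries out explicitly in the later proof concerning primitive elements. The bookkeeping works exactly as you describe: each of the six left-nested terms $\mu(\mu(\cdot,\cdot)\otimes\alpha(\cdot))$ arising from the ``inner'' products is matched with opposite sign by one of the six right-nested terms after applying Hom-associativity.
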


Thus, we have a functor from the category $HomAss$ of Hom-associative algebras into a category $HomLie$ of Hom-Lie algebras.  Its left adjoint functor was constructed by Yau in \cite{Yau:EnvLieAlg}. It corresponds to the enveloping algebra of a Hom-Lie algebra. The construction makes use of the combinatorial objects of weighted binary trees.

Removing the skew-symmetry and rearranging the
Hom-Jacobi condition we get similarly the class
of Hom-Leibniz algebra.
\begin{definition}
A \emph{Hom-Leibniz algebra} is a triple $(V,
[\cdot, \cdot], \alpha)$ consisting of a linear
space $V$, bilinear map $[\cdot, \cdot]: V\times
V \rightarrow V$ and a homomorphism $\alpha: V
\rightarrow V$  satisfying
\begin{equation} \label{Leibnizalgident}
 [[x,y],\alpha(z)]=[[x,z],\alpha (y)]+
 [\alpha(x),[y,z]].
\end{equation}
\end{definition}
Note that if a Hom-Leibniz algebra is
skew-symmetric then it is a Hom-Lie algebra.

We introduce in the following definition
Hom-Poisson structure involved naturally in the
deformation theory of Hom-Lie algebras
\cite{HomDeform}.

\begin{definition}
A \emph{Hom-Poisson algebra} is a quadruple $(V,\mu, \{\cdot,
\cdot\}, \alpha)$ consisting of
 a linear space $V$, bilinear maps $\mu: V\times V \rightarrow V$ and
  $\{\cdot, \cdot\}: V\times V \rightarrow V$, and
 a linear space homomorphism $\alpha: V \rightarrow V$
 satisfying
 \begin{enumerate}
\item $(V,\mu, \alpha)$ is a commutative Hom-associative algebra,
\item $(V, \{\cdot,
\cdot\}, \alpha)$ is a Hom-Lie algebra,
\item
for all $x, y, z$ in $V$,
\begin{equation}\label{CompatibiltyPoisson}
\{\alpha (x) , \mu (y,z)\}=\mu (\alpha (y),
\{x,z\})+ \mu (\alpha (z), \{x,y\}).
\end{equation}
\end{enumerate}
\end{definition}
The condition \eqref{CompatibiltyPoisson}
expresses the compatibility between the
multiplication and the Poisson bracket. It can be
 reformulated equivalently as
\begin{equation}\label{CompatibiltyPoissonLeibform}
\{\mu(x,y),\alpha (z) \}=\mu (\{x,z\},\alpha
(y))+\mu (\alpha (x), \{y,z\})
\end{equation}
for all $x, y, z$ in $V$. Note that in this form
it means that $ad_z (\cdot) = \{\cdot,z\}$ is a sort  of generalization of
derivation of associative  algebra defined by  $\mu$, and
also it resembles the identity
\eqref{Leibnizalgident} in the definition for
Leibniz algebra.

It is good place now to give some examples of
Hom-Lie algebras.
It is well known that
$\sll$ is a rigid Lie algebra, that is every
formal deformation is equivalent to trivial
deformation. In the following, we provide
examples of Hom-Lie algebras as  deformations of
the classical Lie algebra $\sll$ defined by $[
x_1,x_2] = 2x_2,\  [ x_1,x_3] = -2x_3 , \  [
x_2,x_3] =x_1$ and  $q$-deformed  Witt algebra.

\begin{example}
We construct  Hom-Lie infinitesimal formal deformations of $\sll$ which are not Lie algebras.  We consider the  $3$-dimensional Hom-Lie algebras with the skew-symmetric bracket $[~,~]_t$ and linear map $\alpha_t$  defined  as follows
$$
\begin{array}{cc}
\begin{array}{ccc}
 [ x_1, x_2 ] _t&= &a_1 t x_1+(2- a_2 t) x_2  \\ {}
 [x_1, x_3 ]_t&=& a_3 t x_1+a_4 t x_2+(-2+ a_2 t) x_3, \\ {}
 [ x_2,x_3 ] _t& = & (1-\frac{a_2}{2} t) x_1,
 \end{array}
 & \ \
(\alpha_t)= \left(
  \begin{array}{ccc}
 1+ b_1 t & \frac{a_1}{2}t& \frac{b_2-a_3}{2}t \\
 b_2 t &1- \frac{a_2 }{2}t&-\frac{a_4}{2}t \\
    0 & 0 &1- \frac{a_2 }{2}
  \end{array}
\right)
\end{array}
$$
where $ a_1,a_2,a_3,a_4,b_1,b_2$ are parameters in $\K$.

These Hom-Lie algebras become  Lie algebras for all $t$ if and only if $a_1=0$ and $a_3=0$, as follows from
$$[x_1,[x_2,x_3]]+[x_3,[x_1,x_2]]+[x_2,[x_3,x_1]]=
(2a_3t -(a_2a_3 +a_1a_4)t^2)x_2+(2a_1t-a_1a_2 t^2)x_3=0.
$$

\end{example}
\begin{example}[\textbf{Jackson $\sll$}]

We consider the Hom-Lie algebra Jackson $\sll$
which is a Hom-Lie deformation of the classical Lie algebra  $\sll$. The Jackson $\sll$ is
related to Jackson derivations.  As linear space, it is generated by
$x_1,x_2,x_3$ with the skew-symmetric brackets defined by
$$[ x_1,x_2]_t = 2x_2,\ \ [ x_1,x_3]_t = -2x_3-2tx_3 , \ \ [ x_2,x_3]_t =x_1+\frac{t}{2}x_1.$$

The linear map $\alpha_t$ is defined by
$$\alpha_t(x_1)= x_1, \quad\alpha_t(x_2)=\frac{2+t}{2(1+t)}x_2=x_2+\sum_{k=0}^{\infty}{\frac{(-1)^k}{2}t^k\ } x_2,
 \quad \alpha_t(x_3)=x_3+\frac{t}{2} \, x_3.$$
The Hom-Jacobi identity is proved as follows. It is enough to
consider it on $x_2$, $x_3$ and $x_1$:
\begin{align*}
[ \alpha_t(x_2),&[ x_3,x_1]_t]_t+[ \alpha_t(x_3),[ x_1,x_2]_t]_t+[
\alpha_t(x_1),[ x_2,x_3]_t]_t=\\
&=(2+t)[ x_2,x_3]_t+(2+t)[ x_3,x_2]_t+\frac{(2+t)}{2}[ x_1,x_1]_t =0.
\end{align*}

%
\end{example}

\begin{example} \label{ex:qWitt}
Let $\A$ be the unique factorization domain
$\K[z,z^{-1}]$, the Laurent polynomials in $z$
over the field $\K$. Then the space ${\mathcal
D}_\sigma(\A)$ can be generated by a single
element $D$ as a left $\A$-module, that is,
${\mathcal D}_\sigma(\A)=\A\cdot D$ (This is a special case of
\cite[Theorem 4]{HLS}.) When $\sigma(z)=qz$ with $q\neq0$
and $q\neq 1$, one can take $D$ as $z$ times the
Jackson $q$-derivative
$$D=\frac{id-\sigma}{1-q}\quad :\quad
f(z)\mapsto\frac{f(z)-f(qz)}{1-q}.$$
The $\K$-linear space
${\mathcal D}_\sigma (\A) = \bigoplus_{n\in \mathbb{Z}}
\K\cdot x_n,$ with
$x_n=-z^nD$ can be equipped with the skew-symmetric
bracket $[ \cdot,\cdot ]_\sigma$
defined on generators  as
\begin{equation} \label{qWittrel} [x_n,x_m]=
q^nx_{n}x_m-q^mx_{m}x_n =[x_n,x_m]
=(\{n\}_q-\{m\}_q)x_{n+m},
\end{equation}
where $\{n\}_q=(q^n-1)/(q-1)$ for $q\neq 1$
and $\{n\}_1=n.$
This bracket is skew-symmetric and
satisfies the $\sigma$-deformed Jacobi identity
\begin{equation} \label{qWittJacobi}
(q^n+1)[ x_n, [ x_l, x_m]]+ (q^l+1)[ x_l,  [ x_m, x_n]_\sigma]
+(q^m+1)[ x_m, [ x_n,x_l]]=0.
\end{equation}
Therefore,
$(V,[\cdot,\cdot],\alpha) =
(\bigoplus_{i\in \mathbb{Z}} \K
x_n, [\cdot,\cdot],\alpha)$
with the bilinear bracket defined on generators as
$[x_n,x_m] = (\{n\}_q-\{m\}_q)x_{n+m}$ and
the linear twisting map $\alpha:V\rightarrow V$
acting on generators
as $\alpha (x_n) =  (q^n+1) x_n$ is an example of
Hom-Lie algebra \cite{LS2,MS} (see Definition \ref{def:HomLie} in this paper).
Obviously this Hom-Lie algebra can be viewed
as a $q$-deformation of Witt algebra in the sense that
for $q = 1$ indeed one recovers the bracket and
the commutation
relations for generators of the Witt algebra.
The definition of its generators using first
order differential operators is recovered if one assumes that $D= z\frac{d}{dz}$
for $q=1$ as one would expect from
passing to a limit
in the definition of the operator $D$.

It can be also shown that there is a central
extension $Vir_q$ of this deformation in the category of hom-Lie
algebras with the deformed Jacobi identity \eqref{qWittJacobi}, which is a natural $q$-deformation
of the Virasoro algebra. The algebra $Vir_q$ is
spanned by elements
$\{x_n\,|\,n\in\mathbb{Z}\}\cup\{{\bf c}\}$ where
${\bf c}$ is central with respect to Hom-Lie bracket,
i.e.,
$[ Vir_q,{\bf c}]=
[{\bf c}, Vir_q]=0$. The
bracket of $x_n$ and $x_m$ is computed according to
$$[ x_n,x_m]=(\{n\}_q-\{m\}_q)x_{m+n}+
\delta_{n+m,0} \frac{q^{-n}}{6(1+q^n)}\{n-1\}_q\{n\}_q\{n+1\}_q{\bf c}.$$
Note that when $q=1$ we retain the
classical Virasoro algebra
$$[ x_n,x_m]=(n-m)x_{m+n}+
\delta_{n+m,0}
\frac{1}{12}\{n-1\}\{n\}\{n+1\} {\bf c}.$$
from conformal field and string theories.
Note also that when specializing ${\bf c}$ to
zero, or equivalently rescaling ${\bf c}$
by extra parameter and then letting the
parameter degenerate to zero,
one recovers the $q$-deformed Witt
algebra. Because of this the Witt
algebra is called also,
primarily in the physics literature,
a centerless Virasoro algebra. In a similar way
the $q$-deformed Witt algebra could be called a
centerless $q$-deformed Virasoro algebra but of
course with the word "central" used in terms of
the Hom-Lie algebra bracket.

In \cite{HomDeform} it was shown how the $q$-deformed Witt algebras
can be viewed in the framework of deformation theory of Hom-Lie
algebras.
\end{example}

\begin{remark}
In \cite{HLS}, the Example \ref{ex:qWitt} has been presented as the simplest example of application of the general method for quasi-Lie quasi-deformation and discrete modification of Lie algebras of vector fields based on general $\sigma$-derivations.
To our knowledge, analogous examples of $q$-deformed Witt algebras  and
$q$-Virasoro algebras associated to ordinary $q$-derivatives
have been constructed for the first time  in 1990 in \cite{AizawaSaito,ChaiElinPop,ChaiIsKuLuk,CurtrZachos1}
where also the $q$-deformed Jacobi identities for these algebras have been discovered.
The earliest paper we know, approaching such example of $q$-deformed Witt algebra  and
of the associated with it $q$-Virasoro algebra in the way closest to Hom-Lie algebras approach
by systematically using the $q$-deformed Jacobi identity as the identity for a non-associative algebra for obtaining central extension, was the paper by Aizawa and Sato \cite{AizawaSaito} from 1990-1991.
In that paper, the authors achieve an important insight by defining a class of non-associative algebras basically almost as general as Hom-Lie algebras and use it systematically
for construction of the $q$-Virasoro algebra
as central extension in terms of this class of non-associative algebras.
The only minor difference of their definition from Hom-Lie algebras (see Definition \ref{def:HomLie})
is that they put extra condition that in addition to skew-symmetry and a Hom-Jacobi identity
there must be a "special limit"
so that in the limit a Lie algebra is obtained.
At the same time no precise meaning
what "special limit" should mean for the general class of non-associative algebras satisfying the skew symmetry and the Hom-Jacobi identity (in our terminology) is given. When imposed on this specific example,
the meaning they use is exactly
that by passing to the limit $q\rightarrow 1$ in the defining commutation relations
one must recover the defining commutation relations of the classical Witt and Virasoro Lie algebras.
In 1999 in \cite{Hu}, the Example \ref{ex:qWitt} (up to minor change of deformation parameter and generators) of $q$-Witt and $q$-Virasoro algebras associated to Jackson $q$-derivative has been presented in a way close to the way it appeared in \cite{HLS}, namely using Jackson $q$-derivative by stressing the fact that it is a $\sigma$-derivation (skew-derivation), presenting the bracket multiplication and
the same $q$-Jacobi identity as described in the Example \ref{ex:qWitt} and then systematically using it
for obtaining the relations for $q$-Virasoro algebra as a central extension in terms of this bracket
and $q$-Jacobi identity. The article \cite{Hu} is mainly concerned with this specific example and some modifications and with the specific for this example non-associative structure, thus fitting perfectly
within the framework and the general method developed in \cite{HLS}.
The author of \cite{Hu} also used the specific $q$-deformed Jacobi condition \eqref{qWittJacobi}
satisfied in this particular example, the skew-symmetry, and the extra condition for the non-associative algebra being graded (but then not necessarily with one dimensional homogeneous components)
to define what he called $q$-Lie algebras. He also provided more examples of such $q$-Lie algebras
obtained as a sum of the $q$-Witt algebra and of quantum spaces.
The $q$-Jacobi condition \eqref{qWittJacobi}
is of cause a very specific example of the general operator twisted Jacobi condition of Hom-Lie algebras, and the condition that $q$-Lie algebras are graded is an additional restriction.
Therefore, the class of $q$-Lie algebras from \cite{Hu} is obviously just a specific
subclass of Hom-Lie algebras. Also clearly, in the limit when $q \rightarrow 1$ this particular $q$-deformed Jacobi condition converges to
Lie algebra Jacobi condition which means that the extra "special limit" condition in the 1990-1991 paper of Aizawa and Sato is obviously satisfied for $q$-Lie algebras in this sense, thus making them a subclass with respect to  the definition of Aizawa and Sato as well.
\end{remark}

\subsection{Hom-coalgebra, Hom-bialgebra and Hom-Hopf algebra structures}
Now we introduce the notions  of Hom-coalgebras,
Hom-bialgebras and Hom-Hopf algebras and describe
some properties of those structures extending the
classical structures of coalgebras, bialgebra
and Hopf algebra. We also define notions of
modules and comodules over  Hom-associative
algebra and Hom-coassociative
coalgebra.  Also, by removing the unit and the counit from the bialgebra structure, we obtain the notion of generalized bialgebra which we extend to our context. We prove, in the spirit of the theorem \ref{thmYauConstrHomAss}, that we can construct a Hom-associative coalgebra and a generalized Hom-bialgebra from a coalgebra (resp. generalized bialgebra) together with coalgebra morphism (resp. generalized bialgebra morphism).
\begin{definition}
A \emph{Hom-coalgebra} is a
triple $\left( V,\Delta ,\beta
\right) $ where $V$ is a $\K$-vector space and
$\Delta :V\rightarrow V\otimes V,\   \beta :V\rightarrow V
 $
are linear maps.

A  \emph{Hom-coassociative coalgebra} is a Hom-coalgebra satisfying
\begin{equation}\label{C1}  \left( \beta\otimes \Delta
\right) \circ \Delta =\left( \Delta \otimes
\beta\right) \circ \Delta.
\end{equation}
A Hom-coassociative coalgebra is said to be  \emph{counital} if there exists a map $\varepsilon :V\rightarrow \K $
satisfying

\begin{equation}\label{C2} \left( id\otimes \varepsilon
\right) \circ \Delta =id \qquad \text{ and }\qquad
\left( \varepsilon \otimes id\right) \circ
\Delta =id\end{equation}

\end{definition}

The conditions \ref{C1}  and \ref{C2} are respectively equivalent to the
following commutative diagrams:
\begin{eqnarray*}
\begin{array}{ccc}
 V & \stackrel{\Delta}{\longrightarrow
} & V\otimes
V \\
\ \quad \left\downarrow ^{ \Delta}\right. &  &
\quad \left\downarrow
^{\beta\otimes\Delta}  \right. \\
V\otimes V & \stackrel{\Delta \otimes
\beta}{\longrightarrow } & V\otimes V\otimes V
\end{array}
\quad\quad\quad
\begin{array}{ccccc} \K \otimes V &
\stackrel{\varepsilon \otimes
id_V}{\longleftarrow } & V\otimes V &
\stackrel{id\otimes \varepsilon }{\longrightarrow } & V\otimes \K \\
& \nwarrow ^{\cong } & \quad \left\uparrow
^\Delta \right. & \nearrow^{\cong
} &  \\
&  & V &  &
\end{array}
\end{eqnarray*}

Let $\left( V,\Delta ,\beta \right) $
and $\left( V^{\prime },\Delta ^{\prime
},\beta^{\prime }\right)
$ be two Hom-coalgebras (resp.  Hom-associative coalgebras). A
linear map  $f\ :V\rightarrow V^{\prime }$ is a \emph{morphism of  Hom-coalgebras} (resp. \emph{Hom-coassociative coalgebras}) if%
$$
(f\otimes f)\circ \Delta =\Delta ^{\prime }\circ
f
 \quad \text{ and}\quad f\circ\beta= \beta^{\prime }\circ f.$$

If furthermore the Hom-coassociative coalgebras admit counits $ \varepsilon$ and $ \varepsilon ^{\prime }$, we have moreover $\varepsilon =\varepsilon ^{\prime }\circ f $.

The following theorem shows how to construct a Hom-coassociative Hom-coalgebra starting from a coalgebra and a coalgebra endomorphism. It is a coalgebra version of the theorem \ref{thmYauConstrHomAss}. We  need only the coassociative comultiplication of the coalgebra. The existence of counit is not necessary.

\begin{theorem} \label{thmConstrHomCoalg}
Let $(V,\Delta)$ be a coalgebra  and let $\beta : V\rightarrow V$ be a coalgebra endomorphism. Then $(V,\Delta_\beta,\beta)$, where $\Delta_\beta=\Delta\circ\beta$,  is a Hom-coassociative coalgebra.

Moreover, suppose that  $(V',\Delta')$ is another coalgebra and  $\beta ' : V'\rightarrow V'$ is a coalgebra endomorphism. If $f:V\rightarrow V'$ is a coalgebra morphism that satisfies $f\circ\beta=\beta'\circ f$ then
$$f:(V,\Delta_\beta,\beta)\longrightarrow (V',\Delta'_{\beta'},\beta')
$$
is a morphism of Hom-coassociative coalgebras.
\end{theorem}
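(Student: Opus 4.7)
The plan is to mirror, in the coalgebra setting, the proof of Theorem \ref{thmYauConstrHomAss} that the author has already carried out for Hom-associative algebras. The single ingredient I need is the hypothesis that $\beta$ is a coalgebra endomorphism, which in equations reads $\Delta \circ \beta = (\beta \otimes \beta) \circ \Delta$. Everything else reduces to bookkeeping with the definition $\Delta_\beta = \Delta \circ \beta$.

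I would first introduce Sweedler notation $\Delta(x) = \sum x_{(1)} \otimes x_{(2)}$; the compatibility $\Delta \circ \beta = (\beta\otimes\beta)\circ \Delta$ then gives
\begin{equation*}
\Delta_\beta(x) = \sum \beta(x_{(1)}) \otimes \beta(x_{(2)}).
\end{equation*}
Applying $\Delta_\beta$ once more on either factor and pushing each $\beta$ through $\Delta$ a second time via the same identity yields
\begin{equation*}
(\beta \otimes \Delta_\beta)\circ \Delta_\beta(x) = \sum \beta^2(x_{(1)}) \otimes \beta^2(x_{(2)(1)}) \otimes \beta^2(x_{(2)(2)}),
\end{equation*}
\begin{equation*}
(\Delta_\beta \otimes \beta)\circ \Delta_\beta(x) = \sum \beta^2(x_{(1)(1)}) \otimes \beta^2(x_{(1)(2)}) \otimes \beta^2(x_{(2)}).
\end{equation*}
Applying $\beta^{\otimes 3}\circ \beta^{\otimes 3}$ to the ordinary coassociativity identity $\sum x_{(1)}\otimes x_{(2)(1)}\otimes x_{(2)(2)} = \sum x_{(1)(1)}\otimes x_{(1)(2)}\otimes x_{(2)}$ identifies the two right-hand sides above, which is exactly condition \eqref{C1} for $(V,\Delta_\beta,\beta)$. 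Notice that no counit is used; this is why the statement does not require $(V,\Delta)$ to be counital.

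For the second assertion, I would verify the two defining conditions of a morphism of Hom-coassociative coalgebras. The intertwining condition $f\circ \beta = \beta'\circ f$ is part of the hypothesis. The comultiplicative condition $(f\otimes f)\circ \Delta_\beta = \Delta'_{\beta'}\circ f$ follows from the one-line computation
\begin{equation*}
(f\otimes f)\circ \Delta \circ \beta \;=\; \Delta'\circ f \circ \beta \;=\; \Delta' \circ \beta' \circ f,
\end{equation*}
where the first equality uses that $f\colon (V,\Delta)\to (V',\Delta')$ is a morphism of coalgebras and the second uses the intertwining hypothesis.

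The only real risk in this proof is notational rather than conceptual: one has to keep the tensor factors in the right order and track the doubled twistings $\beta^2$ that appear after pushing $\beta$ across $\Delta$ twice. Set up that way, the argument is essentially the formal dual of the proof of Theorem \ref{thmYauConstrHomAss}; in diagrammatic form one could also read it off by reversing all arrows in that proof, but the Sweedler calculation has the advantage of applying without any finite-dimensionality assumption on $V$.
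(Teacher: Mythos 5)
Your proof is correct and follows essentially the same route as the paper's: both reduce the Hom-coassociativity of $\Delta_\beta$ to ordinary coassociativity of $\Delta$ via the identity $(\beta\otimes\beta)\circ\Delta=\Delta\circ\beta$, and both verify the morphism condition by the same one-line computation $(f\otimes f)\circ\Delta\circ\beta=\Delta'\circ f\circ\beta=\Delta'\circ\beta'\circ f$. The only difference is cosmetic: you work in Sweedler notation with the twistings pushed through to give $\beta^2$ on each tensor factor, while the paper writes the identical argument as a chain of compositions of maps.
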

\begin{proof}
We show that $(V,\Delta_\beta,\beta)$ satisfies the axiom \ref{C1}.

 Indeed, using the fact that $(\beta\otimes\beta)\circ\Delta=\Delta\circ\beta$, we have
\begin{align*}
\left( \beta\otimes \Delta_\beta
\right) \circ \Delta_\beta &=\left( \Delta_\beta \otimes
\beta\right) \circ \Delta_\beta\\
&=\left( (\Delta\circ\beta) \otimes
\beta\right) \circ\Delta\circ\beta\\
&=\left(( (\beta\otimes\beta)\circ\Delta) \otimes
\beta\right) \circ\Delta\circ\beta\\
&= (\beta\otimes\beta\otimes\beta)\circ (\Delta \otimes
id ) \circ\Delta\circ\beta\\
&= (\beta\otimes\beta\otimes\beta)\circ (id \otimes\Delta
 ) \circ\Delta\circ\beta\\
 &=\left(\Delta_\beta \otimes\beta
\right) \circ \Delta_\beta.
\end{align*}

The second assertion is proved similarly:
$$f\circ\Delta_\beta=f\circ\Delta\circ\beta=\Delta'\circ f\circ\beta=\Delta'\circ\beta'\circ f=\Delta'_{\beta'}\circ f.$$
\end{proof}

We introduce  in the following,  the structures of
module and comodule over Hom-associative
algebras and Hom-coassociative
coalgebras and Hom-bialgebra.

Let $\A=(V,\mu,\alpha)$ be a Hom-associative
$\K$-algebra, an $\A$-module (left) is a triple
$(M,f,\gamma)$ where $M$ is $\K$-vector space and
$f,\gamma$ are  $\K$-linear maps, $f:  M
\rightarrow M$ and $\gamma : V \otimes M
\rightarrow M$, such that the following diagram
commutes:

$$
\begin{array}{ccc}
V\otimes V\otimes M & \stackrel{\mu \otimes
f}{\longrightarrow } & V\otimes
M \\
\ \quad \left\downarrow ^{\alpha \otimes \gamma
}\right. &  & \quad \left\downarrow
^\gamma \right. \\
V\otimes M & \stackrel{\gamma }{\longrightarrow }
& M
\end{array}
$$
The dualization leads to  comodule definition
over a Hom-coassociative coalgebra.

Let  $C=(V,\Delta, \beta)$ be a Hom-coassociative
coalgebra. A $C$-comodule (right) is a triple
$(M,g,\rho)$ where $M$ is a $\K$-vector space and
$g,\rho$ are $\K$-linear maps, $g: M \rightarrow
M$ and $\rho : M \rightarrow M\otimes V $, such
that the following diagram commutes:

$$
\begin{array}{ccc}
 M & \stackrel{\rho}{\longrightarrow } & M \otimes V
 \\
\ \quad \left\downarrow ^{\rho}\right. &  & \quad
\left\downarrow
^{g \otimes \Delta} \right. \\
M\otimes V & \stackrel{\rho\otimes
\beta}{\longrightarrow } & M\otimes V\otimes V
\end{array}
$$
\begin{remark}
A Hom-associative $\K$-algebra
$\A=(V,\mu,\alpha)$ is  a left $\A$-module with
$M=V$, $f=\alpha$ and $\gamma =\mu$. Also, a
Hom-coassociative coalgebra $C=(V,\Delta, \beta)$
is a right  $C$-comodule with $M=V$, $g=\beta$
and $\rho =\Delta$.
\end{remark}

\begin{definition}
A \emph{Hom-bialgebra} is a 7-uple $\left(
V,\mu ,\alpha,\eta ,\Delta ,\beta,\varepsilon
\right) $ where

(B1)\qquad $\left( V,\mu ,\alpha,\eta \right) $
is a Hom-associative algebra with unit  $\eta $.

(B2) $\qquad \left( V,\Delta ,\beta,\varepsilon
\right) $ is a Hom-coassociative coalgebra with counit $\varepsilon$.

(B3)\qquad The linear maps $\Delta $ and
$\varepsilon $ are compatible with the multiplication  $\mu$, that is $$
\left\{
\begin{array}{l}
\Delta \left( e_1\right) =e_1\otimes e_1\qquad
\text{where}\ e_1=\eta \left( 1\right) \\ \Delta
\left( \mu(x\otimes y)\right)=\Delta \left( x
\right) \bullet \Delta \left( y\right)
=\sum_{(x)(y)}\mu(x^{(1)}\otimes y^{\left(
1\right) })\otimes \mu( x^{\left( 2\right)
}\otimes y^{\left( 2\right) })\qquad \\
\varepsilon \left( e_1\right) =1 \\
\varepsilon \left( \mu(x\otimes y)\right)
=\varepsilon \left( x\right) \varepsilon \left(
y\right)
\end{array}
\right.
$$
where the bullet $\bullet$ denotes the
multiplication on tensor product and by  using
the Sweedler's notation $\Delta \left( x\right)
=\sum_{(x)}x^{(1)}\otimes x^{\left( 2\right) }$.
If there is no ambiguity we denote the
multiplication by a dot.
\end{definition}

\begin{remark}
One can consider a more restrictive definition
where  linear maps $\Delta $ and $\varepsilon $
are morphisms of Hom-associative algebras that is
the condition (B3) becomes equivalent to
$$
\left\{
\begin{array}{l}
\Delta \left( e_1\right) =e_1\otimes e_1\qquad
\text{where}\ e_1=\eta \left( 1\right) \\ \Delta
\left( \mu(x\otimes y)\right)=\Delta \left( x
\right) \bullet \Delta \left( y\right)
=\sum_{(x)(y)}\mu(x^{(1)}\otimes y^{\left(
1\right) })\otimes \mu( x^{\left( 2\right)
}\otimes y^{\left( 2\right) })\qquad \\
\varepsilon \left( e_1\right) =1 \\
\varepsilon \left( \mu(x\otimes y)\right)
=\varepsilon \left(
x\right) \varepsilon \left( y\right)\\
\Delta \left(
\alpha(x)\right)=\sum_{(x)}{\alpha(x^{(1)})\otimes
\alpha( x^{\left( 2\right) })}\\
\varepsilon\circ \alpha \left( x\right)
=\varepsilon \left( x\right)
\end{array}
\right.
$$
\end{remark}

 Given a Hom-bialgebra $\left( V,\mu ,\alpha,\eta ,\Delta,\beta ,\varepsilon
\right) $, we show that the vector space $Hom
\left( V,V \right)$ with the multiplication given
by the convolution product carries a structure of
Hom-associ\-ative algebra.
\begin{proposition}
Let  $\left( V,\mu ,\alpha,\eta ,\Delta,\beta
,\varepsilon \right) $ be a Hom-bialgebra. Then
the algebra $Hom \left( V,V \right)$ with the
multiplication given by the convolution product
defined by
$$ f \star g=\mu \circ \left( f\otimes g \right) \circ\Delta $$
and the unit being $\eta \circ \epsilon$ is a unital
Hom-associative algebra with the homomorphism map
defined by $\gamma (f)=\alpha \circ f \circ
\beta$.
\end{proposition}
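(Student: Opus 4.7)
The plan is to verify the two remaining axioms of a unital Hom-associative algebra on $\Hom(V,V)$, namely the Hom-associativity of $\star$ with respect to $\gamma$ and the two-sided unit property of $\eta\circ\varepsilon$, by reducing them respectively to Hom-associativity of $\mu$ together with Hom-coassociativity of $\Delta$, and to the counit axiom together with the unit axiom of $\eta$. Bilinearity of $\star$ and $\K$-linearity of $\gamma$ are immediate from the definitions.

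For Hom-associativity I would unfold $\gamma(f)\star(g\star h)=\mu\circ(\gamma(f)\otimes(g\star h))\circ\Delta$ as a single composition, pulling $f,g,h$ inside by functoriality of the tensor product. This rewrites the left-hand side as
$$\mu\circ(\alpha\otimes\mu)\circ(f\otimes g\otimes h)\circ(\beta\otimes\Delta)\circ\Delta,$$
and an analogous unfolding of $(f\star g)\star\gamma(h)$ yields
$$\mu\circ(\mu\otimes\alpha)\circ(f\otimes g\otimes h)\circ(\Delta\otimes\beta)\circ\Delta.$$
The Hom-associativity condition \eqref{Hom-ass} gives $\mu\circ(\alpha\otimes\mu)=\mu\circ(\mu\otimes\alpha)$, and the Hom-coassociativity condition \eqref{C1} gives $(\beta\otimes\Delta)\circ\Delta=(\Delta\otimes\beta)\circ\Delta$, so the two compositions coincide.

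For the unit property I would evaluate pointwise. Writing $\Delta(v)=\sum_{(v)}v^{(1)}\otimes v^{(2)}$ and $\eta(1)=e_1$, one computes
$$((\eta\circ\varepsilon)\star f)(v)=\sum_{(v)}\mu\bigl(\varepsilon(v^{(1)})\,e_1\otimes f(v^{(2)})\bigr)=\sum_{(v)}\varepsilon(v^{(1)})\,f(v^{(2)})=f(v),$$
where the middle equality invokes the left unit diagram for $\eta$ (together with $\K$-linearity of $\mu$) and the last uses the counit identity $(\varepsilon\otimes\id)\circ\Delta=\id$ from \eqref{C2}. The relation $f\star(\eta\circ\varepsilon)=f$ follows symmetrically from the right unit diagram and the other counit identity.

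The main obstacle is purely bookkeeping: one has to keep careful track of which tensor slot each occurrence of $\alpha$ or $\beta$ sits in while shuffling composites of tensor products past one another. Conceptually, however, the argument is simply the twisted analogue of the classical proof that the convolution algebra of a bialgebra is associative, with the twists $\alpha$ and $\beta$ matched pairwise by precisely the Hom-associativity and Hom-coassociativity conditions that define a Hom-bialgebra.
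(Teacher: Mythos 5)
Your proposal is correct and follows essentially the same route as the paper's proof: both unfold $\gamma(f)\star(g\star h)$ and $(f\star g)\star\gamma(h)$ into the composites $\mu\circ(\alpha\otimes\mu)\circ(f\otimes g\otimes h)\circ(\beta\otimes\Delta)\circ\Delta$ and $\mu\circ(\mu\otimes\alpha)\circ(f\otimes g\otimes h)\circ(\Delta\otimes\beta)\circ\Delta$ and then invoke Hom-associativity of $\mu$ together with Hom-coassociativity of $\Delta$, and both verify the unit property pointwise in Sweedler notation using the unit and counit axioms. No gaps to flag.
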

\begin{proof}
Let $f,g,h\in Hom \left( V,V \right)$.
\begin{eqnarray*}
  \gamma (f) *( g*h)) &=& \mu \circ \left( \gamma (f)\otimes ( g*h) \right) \Delta  \\
   &=& \mu \circ \left( \gamma (f)\otimes ( \mu \circ \left( g\otimes h \right)\circ \Delta) \right) \Delta \\
   &=& \mu \circ \left( \alpha \otimes
   \mu \right)\circ
   \left( f  \otimes g\otimes h \right)
   \circ \left( \beta \otimes \Delta) \right)
   \Delta .
\end{eqnarray*}
Similarly
$$(f * g)*\gamma (h)=\mu \circ
\left(  \mu \otimes \alpha \right)\circ \left( f
\otimes g\otimes h \right)\circ \left( \Delta
\otimes \beta) \right) \Delta .
$$
Then, the Hom-associativity of $\mu$ and a
Hom-coassociativity of $\Delta$ lead to the
Hom-associ\-ativity of the convolution product.

The map $\eta \circ \epsilon$ is the unit for the convolution product. Indeed,  let $f\in Hom \left( V,V \right)$ and $x\in V$,
\begin{align*} (f \star (\eta \circ \epsilon))(x) =\mu \circ \left( f\otimes \eta \circ \epsilon \right) \circ\Delta(x)= \sum_{(x)}{\mu \left( f(x^{(1)})\otimes \eta \circ \epsilon(x^{(2)}) \right)}=\\ \sum_{(x)}{\epsilon(x^{(2)})\mu  \left( f(x^{(1)})\otimes \eta (1) \right)}=\sum_{(x)}{\epsilon(x^{(2)})  f(x^{(1)}) }=\sum_{(x)}{f( x^{(1)}\epsilon(x^{(2)})) }=f(x).
\end{align*}
Similar calculation shows that $ (\eta \circ \epsilon)\star f =f$.
\end{proof}
\begin{definition}
 An endomorphism $S$ of $V$ is said to be
an\emph{ antipode} if it is  the inverse of the
identity over $V$ for the Hom-associative algebra $Hom \left(
V,V \right)$ with the multiplication given by the
convolution product defined by
$$ f \star g=\mu \circ
\left( f\otimes g \right) \Delta $$ and the unit
being $\eta \circ \epsilon$.
\end{definition}

The condition being antipode may be expressed by
the condition:
$$
\mu \circ S\otimes Id\circ \Delta = \mu \circ
Id\otimes S\circ \Delta =\eta \circ \varepsilon .
$$
\begin{definition}
A {\it Hom-Hopf algebra }is a Hom-bialgebra with
an antipode.
\end{definition}
Then, a Hom-Hopf algebra over a $\K$-vector space
$V$ is given by
$$\HH=(V,\mu ,\alpha,\eta ,\Delta, \beta
,\varepsilon ,S)$$

where the following homomorphisms
$$\mu :V\otimes V\rightarrow V, \quad \eta :\K\rightarrow V,\quad \alpha :V\rightarrow V$$
$$\Delta :V\rightarrow V\otimes V, \quad \varepsilon :V\rightarrow \K,\quad\beta :V\rightarrow V$$
$$S :V\rightarrow \K$$
satisfy the following conditions
\begin{enumerate}
\item $(V,\mu,\alpha ,\eta )$ is a unital  Hom-associative algebra.
\item $(V,\Delta, \beta ,\varepsilon)$ is a counital Hom-coalgebra.
\item $\Delta $ and $\varepsilon $ are compatible with the multiplication  $\mu$, that is
$$
\left\{
\begin{array}{l}
\Delta \left( e_1\right) =e_1\otimes e_1\qquad
\text{where}\ e_1=\eta \left( 1\right) \\ \Delta
\left( x\cdot y\right)=\Delta \left( x \right)
\bullet \Delta \left( y\right)
=\sum_{(x)(y)}x^{(1)}\cdot y^{\left( 1\right)
}\otimes x^{\left( 2\right)
}\cdot y^{\left( 2\right) }\qquad \\
\varepsilon \left( e_1\right) =1 \\
\varepsilon \left( x\cdot y\right) =\varepsilon
\left( x\right) \varepsilon \left( y\right)
\end{array}
\right.
$$
\item $S$ is the antipode:
$$\mu \circ S\otimes Id\circ \Delta =\mu \circ Id\otimes S\circ \Delta =\eta
\circ \varepsilon .
$$
\end{enumerate}

Now we consider some antipode's properties of Hom-Hopf algebras.

Let $\HH=\left( V,\mu,\alpha ,\eta ,\Delta ,\beta,\varepsilon, S
\right) $  be a Hom-Hopf algebra. For any element $x\in V$, using the counity and
Sweedler notation, one may write
\begin{equation} x=\sum_{(x)}{x^{(1)}\otimes
\varepsilon (x^{(2)})}= \sum_{(x)}{\varepsilon
(x^{(1)})\otimes x^{(2)}}. \label{Coun}
\end{equation}
Then, for any $f\in End_\K (V)$, we have
\begin{equation}
f(x)=\sum_{(x)}{f(x^{(1)}) \varepsilon
(x^{(2)})}= \sum_{(x)}{\varepsilon
(x^{(1)})\otimes f(x^{(2)})}. \label{Coun2}
\end{equation}
Let $ f \star g=\mu \circ \left( f\otimes g
\right) \Delta $ be the convolution product of
$f,g\in End_\K(V)$. One may write
\begin{equation}
(f\star g)(x)= \sum_{(x)}{\mu ( f(x^{(1)})\otimes
g (x^{(2)}))}. \label{starpro}
\end{equation}
Since the antipode $S$ is the inverse of the
identity for the convolution product then $S$
satisfies
\begin{equation}
\varepsilon(x)\eta (1)= \sum_{(x)}{\mu (
S(x^{(1)})\otimes x^{(2)})}=\sum_{(x)}{\mu (
x^{(1)}\otimes S(x^{(2)})}). \label{anti1}
\end{equation}

\begin{proposition}
We have the following properties of the antipode when it exists :
\begin{enumerate}
\item The antipode $S$   is unique,
\item  $S(\eta (1))=\eta (1) $,
\item  $\varepsilon \circ S=\varepsilon$.
\end{enumerate}
\end{proposition}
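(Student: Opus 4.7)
The plan is to treat the three assertions in order of increasing delicacy, relying on the Hom-associative convolution structure $(\Hom(V,V),\star,\gamma)$ with unit $\eta\circ\varepsilon$ and twist $\gamma(f)=\alpha\circ f\circ\beta$ established in the preceding proposition, together with the Sweedler notation $\Delta(x)=\sum_{(x)}x^{(1)}\otimes x^{(2)}$.

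I would handle (2) first by evaluating the antipode identity $\mu\circ(S\otimes\id)\circ\Delta=\eta\circ\varepsilon$ at $e_1=\eta(1)$. By the bialgebra compatibility $\Delta(e_1)=e_1\otimes e_1$ and $\varepsilon(e_1)=1$, this reduces to $\mu(S(e_1)\otimes e_1)=e_1$, and the unit axiom $\mu(y\otimes e_1)=y$ then forces $S(e_1)=e_1$.

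Next, for (3), I would apply $\varepsilon$ to both sides of $\id\star S=\eta\circ\varepsilon$. Using the compatibility $\varepsilon\circ\mu=\varepsilon\otimes\varepsilon$ on the left and $\varepsilon\circ\eta=\id_{\K}$ on the right, one gets $\sum_{(x)}\varepsilon(x^{(1)})\,\varepsilon(S(x^{(2)}))=\varepsilon(x)$. Pulling the scalar $\varepsilon(x^{(1)})$ inside the linear map $\varepsilon\circ S$ and invoking counity $\sum_{(x)}\varepsilon(x^{(1)})\,x^{(2)}=x$ collapses the left side to $\varepsilon(S(x))$, giving the claim.

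For (1), suppose $S$ and $S'$ are both antipodes. Applying Hom-associativity of $\star$ to the triple $(S,\id,S')$ yields $(S\star\id)\star\gamma(S')=\gamma(S)\star(\id\star S')$. Since $S\star\id=\id\star S'=\eta\circ\varepsilon$ is the convolution unit, both sides simplify and one is left with $\gamma(S)=\gamma(S')$, i.e.\ $\alpha\circ S\circ\beta=\alpha\circ S'\circ\beta$. The main obstacle is that Hom-associativity delivers uniqueness only in this twisted form: to recover the bare equality $S=S'$ one needs $\alpha$ injective and $\beta$ surjective, the standard convention being that both structure maps are bijective. Under that hypothesis uniqueness follows; otherwise the statement must be read as uniqueness modulo the twist $\gamma$.
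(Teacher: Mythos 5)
Your proofs of (2) and (3) coincide with the paper's: for (2) you evaluate the antipode identity at $e_1=\eta(1)$, use $\Delta(e_1)=e_1\otimes e_1$ and the unit axiom $\mu(y\otimes e_1)=y$; for (3) you apply $\varepsilon$ to $\id\star S=\eta\circ\varepsilon$ (the paper uses $S\star\id$, which is symmetric) and invoke multiplicativity of $\varepsilon$ together with counity. For (1) you genuinely diverge, and your divergence is the interesting part. The paper runs the classical sandwich $S'=S'\star\id\star S'=\cdots=S$, which at the step passing from $S'\star(\id\star S)$ to $(S'\star\id)\star S$ silently reassociates a triple convolution product; Hom-associativity only licenses the twisted reassociation $\gamma(f)\star(g\star h)=(f\star g)\star\gamma(h)$ with $\gamma(f)=\alpha\circ f\circ\beta$, so the paper's argument as written needs the convolution product to be honestly associative (e.g.\ $\alpha=\beta=\id$, or additional multiplicativity hypotheses). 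Your version applies Hom-associativity directly to the triple $(S,\id,S')$, uses the two-sided unit property of $\eta\circ\varepsilon$ on both sides, and correctly lands on $\gamma(S)=\gamma(S')$, i.e.\ $\alpha\circ S\circ\beta=\alpha\circ S'\circ\beta$, from which $S=S'$ follows only when $\alpha$ is injective and $\beta$ is surjective. What your route buys is an honest accounting of exactly what the Hom-axioms deliver --- it exposes that the unconditional uniqueness in (1) is not a formal consequence of the stated definitions but requires either bijectivity of the structure maps or an unstated associativity of $\star$; what the paper's route buys is the clean classical statement, at the price of that unjustified reassociation. So your proposal is correct as far as it goes, and where it proves less than the paper claims, the shortfall lies in the paper's argument rather than in yours.
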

\begin{proof}
1) We have $S\star id=id \star S=\eta \circ
\varepsilon$. Thus, $(S \star id) \star S=S \star
(id \star S)=S$. If $S'$ is another antipode of
$\HH$ then
$$S'=S' \star id \star S'=S' \star id
\star S=S \star id \star S=S.$$ Therefore the
antipode when it exists is unique.

2) Setting $e_1=\eta (1)$ and since $\Delta
(e_1)= e_1 \otimes e_1$ one has
$$(S\star id) (e_1)=
\mu (S(e_1)\otimes e_1) =S(e_1)=\eta(\varepsilon
(e_1))=e_1.$$

3) Applying \eqref{Coun2} to $S$, we obtain $
S(x)=\sum_{(x)}{S(x^{(1)}) \varepsilon
(x^{(2)})}$.

Applying $\varepsilon$ to \eqref{anti1}, we
obtain
$$\varepsilon (x)=\varepsilon
(\sum_{(x)}\mu(S(x^{(1)})\otimes x^{(2)})).$$
Since $\varepsilon$ is  compatible with the multiplication  $\mu$,
one has
$$\varepsilon (x)=\sum_{(x)}\varepsilon(S(x^{(1)}))\varepsilon
(x^{(2)})=\varepsilon(\sum_{(x)}S(x^{(1)})
\varepsilon(x^{(2)}))=\varepsilon(S(x)).
$$
Thus  $\varepsilon \circ S=\varepsilon$.
\end{proof}


\paragraph{Group-like and primitive elements.} Next, we describe some properties of group-like and
primitive elements in a Hom-bialgebra. We also define generalized primitive elements.
It turns out that the sets of primitive and  generalized primitive elements carry a natural structure of Hom-Lie algebra.

Let  $\HH=(V,\mu,\alpha ,\eta ,\Delta, \beta
,\varepsilon)$ be a Hom-bialgebra and $e_1=\eta
(1)$ be the unit.

\begin{definition}
An element $x\in \HH$ is called \emph{group-like} if $\Delta (x)= x\otimes x $ and  \emph{primitive} if
$\Delta (x)=e_1\otimes x + x\otimes e_1$.
\end{definition}

The coassociativity  of $\Delta$ on a group-like element $x\in \HH$ leads to the condition
$$\beta (x)\otimes x \otimes x= x\otimes x \otimes \beta (x)$$
The condition is satisfied, in particular, if $\beta (x)=\lambda x$ with $\lambda\in \K$.
Let  $V$ be an $n$-dimensional vector space, assume that $\{x_i\}_{i=1,\cdots,n}$ be a basis of $V$, the group-like element $x=\sum_{i=1}^{n}{a_i x_i}$ and
$\beta (x)=\sum_{i=1}^{n}{\beta_i x_i}$, then the coassociativity condition is equivalent to the system of equations
$$a_i \beta_j -a_j \beta_i= 0, \quad \ \text{for } \ \  i,j=1,\cdots,n.$$
For the unit $e_1$, which is also a group-like element, the coassociativity implies  $$\beta (e_1)=\lambda e_1 \quad \lambda\in \K.$$

Now, let $x\in \HH$ be a primitive element, the
coassociativity of $\Delta$ on $x$  implies
$$e_1\otimes e_1\otimes (\beta(x)-\lambda x)+(\beta(x)-\lambda x)\otimes e_1\otimes e_1=0.
$$
Therefore, for a primitive element $x\in \HH$, we have $$\beta(x)=\lambda x.$$
Also, for a primitive element $x\in \HH$, one has
$$(\beta\otimes\Delta)\circ\Delta(x)=\tau_{13}\circ(\Delta\otimes\beta)\circ\Delta(x)
$$
where $\tau_{13}$ is a permutation in the
symmetric group $\mathcal{S}_3$.
Thus, the coassociativity condition becomes
$$(\Delta\otimes\beta)\circ\Delta(x)=\tau_{13}\circ(\Delta\otimes\beta)\circ\Delta(x)
$$
\begin{lemma}
Let $x$ be a primitive element in $\HH$, then
$\varepsilon(x)=0$.
\end{lemma}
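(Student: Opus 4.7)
The plan is to apply one of the counit axioms directly to the primitive element $x$ and read off the result. Since $\HH$ is a Hom-bialgebra, the underlying Hom-coassociative coalgebra is counital, so by the right counit axiom $(\id \otimes \varepsilon)\circ\Delta = \id$ (after the canonical identification $V\otimes\K\cong V$).

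I would first evaluate $(\id\otimes\varepsilon)\circ\Delta$ on $x$ using primitivity $\Delta(x)=e_1\otimes x + x\otimes e_1$, obtaining $\varepsilon(x)\,e_1 + \varepsilon(e_1)\,x$. Then I would invoke the compatibility condition $\varepsilon(e_1)=1$ from the definition of a Hom-bialgebra to simplify this to $\varepsilon(x)\,e_1 + x$. Equating with $x$ yields $\varepsilon(x)\,e_1 = 0$, and since $e_1=\eta(1)$ is the unit and therefore nonzero (the unit map $\eta:\K\to V$ is injective in any unital setting where $e_1\neq 0$), we conclude $\varepsilon(x)=0$.

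There is essentially no obstacle here; the statement is a direct consequence of the counit axiom together with $\varepsilon(e_1)=1$, and the argument is identical to the classical bialgebra case because the Hom-twist $\beta$ never enters the computation. One could equally well use the left counit axiom $(\varepsilon\otimes\id)\circ\Delta=\id$ as a sanity check: it gives $\varepsilon(e_1)x + \varepsilon(x)e_1 = x$, leading to the same conclusion.
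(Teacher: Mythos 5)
Your argument is correct and is essentially identical to the paper's proof: both apply the counit axiom $(\id\otimes\varepsilon)\circ\Delta=\id$ to the primitive element, use $\Delta(x)=e_1\otimes x+x\otimes e_1$ and $\varepsilon(e_1)=1$, and conclude $\varepsilon(x)=0$. Your extra remark that $e_1\neq 0$ is needed to pass from $\varepsilon(x)e_1=0$ to $\varepsilon(x)=0$ is a harmless refinement the paper leaves implicit.
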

\begin{proof}
By counity property, we have $ x=(id \otimes
\varepsilon)\circ \Delta (x)$. If $\Delta
(x)=e_1\otimes x + x\otimes e_1$, then
$x=\varepsilon (x) e_1+\varepsilon (e_1) x$, and
since $\varepsilon (e_1)=1$ it implies
$\varepsilon (x)=0$.
\end{proof}
\begin{proposition}

Let  $\HH=(V,\mu,\alpha ,\eta ,\Delta, \beta
,\varepsilon)$ be a Hom-bialgebra and $e_1=\eta
(1)$ be the unit.
 If $x$ and $y$ are two primitive elements in $\HH$. Then we have $\varepsilon (x)=0$ and  the commutator
 $[x,y]=\mu (x\otimes
y) -\mu (y \otimes x)$ is also a primitive
element.

The set of all primitive elements of $\HH$,
denoted by $Prim(\HH)$, has a structure of
Hom-Lie algebra.
\end{proposition}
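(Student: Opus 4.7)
The proof splits into three tasks: verifying $\varepsilon(x)=0$, checking that $[x,y]$ is primitive, and upgrading this to a Hom-Lie algebra structure on $\mathrm{Prim}(\HH)$.

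The first claim is immediate from the lemma just proven: applying counity $x=(\mathrm{id}\otimes\varepsilon)\circ\Delta(x)$ to $\Delta(x)=e_1\otimes x+x\otimes e_1$ and using $\varepsilon(e_1)=1$ gives $\varepsilon(x)=0$. So nothing new needs to be done here; I would simply cite the lemma.

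The central computation is showing $[x,y]\in\mathrm{Prim}(\HH)$. The plan is to apply the bialgebra compatibility (B3) to both $\mu(x\otimes y)$ and $\mu(y\otimes x)$ and subtract. Concretely, I would expand
\begin{equation*}
\Delta(\mu(x\otimes y))=\Delta(x)\bullet\Delta(y)=(e_1\otimes x+x\otimes e_1)\bullet(e_1\otimes y+y\otimes e_1)
\end{equation*}
using $(a\otimes b)\bullet(c\otimes d)=\mu(a\otimes c)\otimes\mu(b\otimes d)$ together with the unit identities $\mu(e_1\otimes e_1)=e_1$, $\mu(e_1\otimes y)=y$, $\mu(x\otimes e_1)=x$, obtaining
\begin{equation*}
\Delta(\mu(x\otimes y))=e_1\otimes\mu(x\otimes y)+y\otimes x+x\otimes y+\mu(x\otimes y)\otimes e_1.
\end{equation*}
The analogous expansion for $\mu(y\otimes x)$ produces the same symmetric middle terms $x\otimes y+y\otimes x$. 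Subtracting, these cross terms cancel and one is left with
\begin{equation*}
\Delta([x,y])=e_1\otimes[x,y]+[x,y]\otimes e_1,
\end{equation*}
so $[x,y]$ is primitive. This is the heart of the argument and is really just bookkeeping on the unit.

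For the final claim, I would combine two earlier facts. First, the proposition stating that every Hom-associative algebra $(V,\mu,\alpha)$ yields a Hom-Lie algebra via $[x,y]=\mu(x\otimes y)-\mu(y\otimes x)$ already supplies skew-symmetry and the Hom-Jacobi identity on all of $V$; it therefore suffices to show $\mathrm{Prim}(\HH)$ is a subspace stable under both $[\cdot,\cdot]$ and $\alpha$. Stability under the bracket is exactly what I just verified; linearity of $\Delta$ and $\eta$ makes $\mathrm{Prim}(\HH)$ a linear subspace. The main obstacle is stability under $\alpha$: one needs $\Delta(\alpha(x))=e_1\otimes\alpha(x)+\alpha(x)\otimes e_1$. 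This is not guaranteed by the minimal version of (B3), so I would invoke the more restrictive Hom-bialgebra convention given in the remark, where $\Delta\circ\alpha=(\alpha\otimes\alpha)\circ\Delta$ and $\alpha(e_1)=e_1$ (the latter following from $\alpha$ being a unital algebra morphism, or added as an assumption). Under that compatibility, applying $\Delta\circ\alpha$ to a primitive $x$ yields $\alpha(e_1)\otimes\alpha(x)+\alpha(x)\otimes\alpha(e_1)=e_1\otimes\alpha(x)+\alpha(x)\otimes e_1$, giving $\alpha(x)\in\mathrm{Prim}(\HH)$. With both closures established, $(\mathrm{Prim}(\HH),[\cdot,\cdot],\alpha|_{\mathrm{Prim}(\HH)})$ inherits the Hom-Lie structure.
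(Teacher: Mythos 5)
Your proposal is correct and follows essentially the same route as the paper: the identity $\varepsilon(x)=0$ is quoted from the preceding lemma, and the core of the argument is the same expansion of $\Delta(x)\bullet\Delta(y)-\Delta(y)\bullet\Delta(x)$ via the compatibility condition (B3) and the unit identities, with the cross terms $x\otimes y+y\otimes x$ cancelling to leave $\Delta([x,y])=e_1\otimes[x,y]+[x,y]\otimes e_1$.

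The one place where you go beyond the paper is the last step. The paper passes from closure under the bracket directly to the Hom-Lie structure by invoking the Hom-associative $\to$ Hom-Lie functor and re-verifying the Hom-Jacobi identity on $V$; it does not address whether $\alpha$ restricts to an endomorphism of $\mathrm{Prim}(\HH)$, which is needed for the triple $\bigl(\mathrm{Prim}(\HH),[\cdot,\cdot],\alpha\bigr)$ to literally satisfy Definition of a Hom-Lie algebra. You correctly observe that $\alpha$-stability does not follow from the minimal form of (B3) and must be secured by the stronger convention of the remark ($\Delta\circ\alpha=(\alpha\otimes\alpha)\circ\Delta$ together with $\alpha(e_1)=e_1$), under which $\Delta(\alpha(x))=e_1\otimes\alpha(x)+\alpha(x)\otimes e_1$ for primitive $x$. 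This is a genuine refinement of the paper's argument rather than a gap in yours; the rest is identical.
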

\begin{proof}
 By a direct calculation one has
\begin{eqnarray*}
   \Delta ([x,y]) &=& \Delta ( \mu (x\otimes y) -\mu (y \otimes x))\\
 \ &=& \Delta (x)\bullet \Delta (y)-\Delta (y)\bullet \Delta (x) \\
  \ &=& (e_1\otimes
x + x\otimes e_1)\bullet (e_1\otimes y + y\otimes
e_1)- (e_1\otimes y + y\otimes e_1)\bullet
(e_1\otimes
x + x\otimes e_1)\\
 \ &=& e_1 \otimes \mu(x\otimes y)+y\otimes x+x\otimes y+\mu(x\otimes y)\otimes e_1\\ & &
 -e_1 \otimes \mu(y\otimes x )-x\otimes y-y\otimes x -\mu( y\otimes x)\otimes e_1\\
  \ &=& e_1 \otimes (\mu(x\otimes y)- \mu(y\otimes x ))+(\mu(x\otimes y)- \mu(y\otimes x ))\otimes e_1\\
  \ &=& e_1 \otimes
[x,y]+[x,y]\otimes e_1
\end{eqnarray*}

which means that $Prim(\HH)$ is closed under the
bracket multiplication $[\cdot,\cdot]$.

We have seen in \cite{MS} that there is a natural
map from the Hom-associative algebras to Hom-Lie
algebras. The bracket $[x,y]=\mu (x\otimes y)
-\mu (y \otimes x)$ is obviously skewsymmetric
and one checks that the Hom-Jacobi condition is
satisfied:

\begin{multline}
  [\alpha(x),[y,z]]-[[x,y],\alpha(z)]-[\alpha (y),[x,z]]=  \nonumber \\
   \mu (\alpha(x)\otimes \mu (y\otimes z))-\mu (\alpha(x)\otimes \mu (z\otimes y
   ))
   -\mu (\mu (y\otimes z)\otimes \alpha(x))+\mu (\mu (z\otimes y )\otimes \alpha(x))\nonumber \\
   -\mu (\mu (x\otimes y)\otimes \alpha(z)) +\mu (\mu (y\otimes x )\otimes \alpha(z))+
   \mu (\alpha(z)\otimes \mu (x\otimes y))-\mu (\alpha(z)\otimes \mu (y\otimes
   x
   ))\nonumber \\
   -\mu (\alpha(y)\otimes \mu (x\otimes z))+\mu (\alpha(y)\otimes \mu (z\otimes
   x
   ))
   +\mu (\mu (x\otimes z)\otimes \alpha(y))-\mu (\mu (z\otimes x )\otimes
   \alpha(y))=0
\end{multline}

\end{proof}

We introduce now a notion of generalized
primitive element.
\begin{definition}
An element $x\in \HH$ is called generalized
primitive element if it satisfies the conditions
\begin{equation}
\label{GPrim}(\beta\otimes\Delta)\circ\Delta(x)
=\tau_{13}\circ(\Delta\otimes\beta)\circ\Delta(x)
\end{equation}
\begin{equation}
\Delta^{op}(x)=\Delta(x)
\end{equation}

where $\tau_{13}$ is a permutation in the
symmetric group $\mathcal{S}_3$.
\end{definition}

In particular, a primitive element in $\HH$ is a generalized
primitive element.
\begin{remark}
The condition \eqref{GPrim} may be written
$$(\Delta\otimes\beta)\circ\Delta(x)=
\tau_{13}\circ(\beta\otimes\Delta)\circ\Delta(x).
$$
\end{remark}
\begin{proposition}
Let  $\HH=(V,\mu,\alpha ,\eta ,\Delta, \beta
,\varepsilon)$ be a Hom-bialgebra and $e_1=\eta
(1)$ be the unit.
 If $x$ and $y$ are two generalized primitive elements in $\HH$.
 Then, we have $\varepsilon (x)=0$ and the commutator $[x,y]=\mu (x\otimes y) -\mu (y
\otimes x)$ is also a generalized primitive
element.

The set of all generalized primitive elements of
$\HH$, denoted by $GPrim(\HH)$, has a structure
of Hom-Lie algebra.
\end{proposition}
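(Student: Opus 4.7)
The plan is to follow the template of the proposition for primitive elements, replacing the explicit form $\Delta(x)=e_1\otimes x+x\otimes e_1$ by the twisted symmetries $\Delta^{op}(x)=\Delta(x)$ and $(\beta\otimes\Delta)\circ\Delta(x)=\tau_{13}\circ(\Delta\otimes\beta)\circ\Delta(x)$. Three things must be verified in order: $\varepsilon(x)=0$, closure of $GPrim(\HH)$ under the commutator bracket, and the Hom-Lie axioms for the restricted structure.

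For $\varepsilon(x)=0$, I would apply projections of the form $\varepsilon\otimes\varepsilon\otimes\id$, $\varepsilon\otimes\id\otimes\varepsilon$ and $\id\otimes\varepsilon\otimes\varepsilon$ to both sides of \eqref{GPrim}, simplify via counity and the relation $\varepsilon\circ\beta=\varepsilon$ (which is natural once $\beta$ is regarded as a coalgebra morphism, as in the more restrictive form of (B3)), and combine the resulting identities with the cocommutativity axiom read under $\varepsilon\otimes\id$. A careful extraction should then yield the claim, together with the side benefit that $\beta(x)=x$ for every generalized primitive element.

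To show $[x,y]\in GPrim(\HH)$, I expand
\[
\Delta([x,y])=\Delta(x)\bullet\Delta(y)-\Delta(y)\bullet\Delta(x)
\]
using the bialgebra compatibility (B3). The cocommutativity $\Delta^{op}([x,y])=\Delta([x,y])$ will then follow by applying the flip to the right-hand side, reindexing the tensor sums, and invoking $\Delta^{op}(x)=\Delta(x)$ and $\Delta^{op}(y)=\Delta(y)$. The principal obstacle is verifying the $\tau_{13}$-symmetry for the commutator: I would compute $(\beta\otimes\Delta)\circ\Delta([x,y])$ by expanding via (B3), substitute the known $\tau_{13}$-symmetry of $(\Delta\otimes\beta)\circ\Delta(x)$ and of $(\Delta\otimes\beta)\circ\Delta(y)$ into the resulting six-fold tensor sum, and reorganize the terms to match $\tau_{13}\circ(\Delta\otimes\beta)\circ\Delta([x,y])$, using the multiplicativity $\beta\circ\mu=\mu\circ(\beta\otimes\beta)$ available under the restrictive form of (B3). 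This bookkeeping is the main combinatorial hurdle; once the algebra-morphism property of $\beta$ is in place, the identity should reduce to a matching of indices on both sides.

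Once closure of $GPrim(\HH)$ under $[\cdot,\cdot]$ is established, the Hom-Lie algebra structure follows immediately from the functor $HomAss\to HomLie$ recorded earlier in this section: skew-symmetry of $[\cdot,\cdot]=\mu(x\otimes y)-\mu(y\otimes x)$ is automatic, and the Hom-Jacobi identity with twisting $\alpha$ is inherited from the Hom-associativity of $\mu$, exactly as in the proof for primitive elements. No further verification beyond closure is needed.
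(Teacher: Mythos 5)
Your overall architecture (check $\varepsilon(x)=0$, check closure under the commutator, then invoke the Hom-associative $\to$ Hom-Lie passage) matches the paper's, and the last step is handled identically. But there are two genuine problems. The more serious one is your treatment of $\varepsilon(x)=0$. Applying $\varepsilon\otimes\varepsilon\otimes\id$, $\varepsilon\otimes\id\otimes\varepsilon$ or $\id\otimes\varepsilon\otimes\varepsilon$ to both sides of \eqref{GPrim} and simplifying with counity and $\varepsilon\circ\beta=\varepsilon$ produces, in each case, the tautologies $x=x$ or $\beta(x)=\beta(x)$; no information about $\varepsilon(x)$ survives. Indeed no such derivation can exist: a group-like element $g$ with $\beta(g)=\lambda g$ satisfies both $\Delta^{op}(g)=\Delta(g)$ and \eqref{GPrim}, hence is a generalized primitive element in the sense of the definition, yet $\varepsilon(g)=1$. (The paper's own proof silently omits this point; for genuine primitive elements $\varepsilon(x)=0$ comes from the explicit form $\Delta(x)=e_1\otimes x+x\otimes e_1$, which is exactly what you no longer have.) The advertised "side benefit" $\beta(x)=x$ is likewise not obtainable --- even for primitive elements the paper only gets $\beta(x)=\lambda x$, and that from Hom-coassociativity, not from \eqref{GPrim}.

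The second issue is in the closure argument. Your route --- expand $(\beta\otimes\Delta)\circ\Delta([x,y])$ and substitute the $\tau_{13}$-symmetries of $x$ and $y$ --- does go through by linearity, but only after you invoke $\beta\circ\mu=\mu\circ(\beta\otimes\beta)$ to split $\beta(x^{(1)}\cdot y^{(1)})$; that multiplicativity is part of the "more restrictive" variant of (B3), not of the Hom-bialgebra axioms you are given, so you end up proving a weaker statement. The paper works in the opposite direction: it expands $(\Delta\otimes\beta)\circ\Delta(x\cdot y-y\cdot x)$ using (B3) twice (so $\beta$ is only ever applied to a whole product and never needs to be split), applies $\tau_{13}$, and then uses only the cocommutativity $\Delta^{op}=\Delta$ of $x$ and of $y$ (on both the outer and inner Sweedler legs) to reindex the result into $(\beta\otimes\Delta)\circ\Delta(x\cdot y-y\cdot x)$. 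You should either adopt that order of operations or explicitly add the hypothesis that $\beta$ is an algebra morphism; and in either case the "bookkeeping" you defer is precisely the content of the proof and needs to be written out.
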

\begin{proof}
Let $x$ and $y$ be two generalized primitive
elements in $\HH$. In the following the
multiplication $\mu$ is denoted by a dot. The
following equalities hold:
\begin{eqnarray*}
  (\Delta\otimes\beta)\circ\Delta(x\cdot y-y\cdot x) &=& (\Delta\otimes\beta)\circ\Delta(x\cdot y)-
  (\Delta\otimes\beta)\circ\Delta(y\cdot x) \\
  \ &=& (\Delta\otimes\beta)(\Delta(x)\bullet \Delta(y))- (\Delta\otimes\beta)(\Delta(y)\bullet \Delta(x))\\
 \ &=& \Delta(x^{(1)}\cdot y^{(1)})\otimes \beta(x^{(2)}\cdot y^{(2)})-
 \Delta(y^{(1)}\cdot x^{(1)})\otimes \beta(y^{(2)}\cdot x^{(2)})\\
  \ &=& (x^{(1)(1)}\cdot y^{(1)(1)})\otimes(x^{(1)(2)}\cdot y^{(1)(2)})\otimes \beta(x^{(2)}\cdot
  y^{(2)})\\ && -(y^{(1)(1)}\cdot x^{(1)(1)})\otimes(y^{(1)(2)}\cdot x^{(1)(2)})\otimes \beta(y^{(2)}\cdot
  x^{(2)}).
\end{eqnarray*}
Then, using the fact that $\Delta^{op}=\Delta$
for generalized primitive elements one has:
\begin{eqnarray*}
  \tau_{13}\circ(\Delta\otimes\beta)\circ\Delta(x\cdot y-y\cdot x) &=&\beta(x^{(2)}\cdot
  y^{(2)})\otimes(x^{(1)(2)}\cdot y^{(1)(2)})\otimes
  (x^{(1)(1)}\cdot y^{(1)(1)}) \\
  &&-\beta(y^{(2)}\cdot
  x^{(2)})\otimes(y^{(1)(2)}\cdot x^{(1)(2)})\otimes (y^{(1)(1)}\cdot x^{(1)(1)})  \\
  \ &=& (\beta\otimes\Delta)
  \circ\Delta(x\cdot y-y\cdot x).
\end{eqnarray*}
The structure of Hom-Lie algebra follows from the
same argument as in the primitive elements case.
\end{proof}

\paragraph{Generalized Hom-bialgebras.} We introduce in the following a more general definition of Hom-bialgebra, where we do not consider that the Hom-associative algebra is unital and the Hom-coassociative coalgebra is counital. This notion extend the notion of generalized bialgebra in the associative case introduced by Loday \cite{Loday} in a more general framework. A generalized bialgebra is a triple  $\left(V,\mu,\Delta \right) $ of an associative multiplication $\mu$, a coassociative comultiplication $\Delta$ together with a compatibility condition.
\begin{definition}
A \emph{generalized Hom-bialgebra} is a 5-uple $\left(
V,\mu ,\alpha ,\Delta ,\beta
\right) $ where

(GB1)\qquad $\left( V,\mu ,\alpha \right) $
is a Hom-associative algebra.

(GB2) $\qquad \left( V,\Delta ,\beta
\right) $ is a Hom-coassociative coalgebra.

(GB3)\qquad The linear maps $\Delta $ is compatible with the multiplication  $\mu$, that is
$$ \Delta
\left( \mu(x\otimes y)\right)=\Delta \left( x
\right) \bullet \Delta \left( y\right)
=\sum_{(x)(y)}\mu(x^{(1)}\otimes y^{\left(
1\right) })\otimes \mu( x^{\left( 2\right)
}\otimes y^{\left( 2\right) })
$$
\end{definition}
We call generalized bialgebra a generalized Hom-bialgebra where $\alpha$ and $\beta$ are the identity map. Any bialgebra is in particular a generalized bialgebra.  A morphism of generalized Hom-bialgebras is just a morphism of Hom-associative algebras and a morphism of Hom-coassociative coalgebras.  We can construct generalized Hom-bialgebras using generalized bialgebras or just bialgebras with bialgebra endomorphisms.
\begin{proposition}\label{propoGenHomBialg}
Let $(V,\mu,\Delta)$ be a generalized bialgebra  and let $\alpha : V\rightarrow V$ be a generalized bialgebra endomorphism. Then $(V,\mu_\alpha,\alpha,\Delta_\alpha,\alpha)$, where $\mu_\alpha=\alpha\circ\mu$ and $\Delta_\alpha=\Delta\circ\alpha$,  is a generalized Hom-bialgebra.

Moreover, suppose that  $(V',\mu',\Delta')$ is another generalized bialgebra and  $\alpha ' : V'\rightarrow V'$ is a generalized bialgebra endomorphism. If $f:V\rightarrow V'$ is a generalized bialgebra morphism that satisfies $f\circ\alpha=\alpha'\circ f$ then
$$f:(V,\mu_\alpha,\alpha,\Delta_\alpha,\alpha)\longrightarrow (V',\mu'_{\alpha'},\alpha',\Delta_{\alpha'},\alpha')
$$
is a morphism of generalized Hom-bialgebras.

\end{proposition}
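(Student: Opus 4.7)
The plan is to verify the three axioms (GB1), (GB2), (GB3) for the quintuple $(V,\mu_\alpha,\alpha,\Delta_\alpha,\alpha)$, and then check that $f$ respects both the Hom-associative and Hom-coassociative structures.

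First I would dispose of (GB1) and (GB2) by invoking the results already proved in the excerpt. Since $\alpha$ is in particular an algebra endomorphism of $(V,\mu)$, Theorem \ref{thmYauConstrHomAss} immediately gives that $(V,\mu_\alpha,\alpha)$ is Hom-associative. Since $\alpha$ is also a coalgebra endomorphism of $(V,\Delta)$, Theorem \ref{thmConstrHomCoalg} yields that $(V,\Delta_\alpha,\alpha)$ is Hom-coassociative. So the only genuinely new verification is the compatibility (GB3).

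For (GB3) I would compute both sides of $\Delta_\alpha(\mu_\alpha(x\otimes y)) = \Delta_\alpha(x)\bullet_\alpha \Delta_\alpha(y)$, where $\bullet_\alpha$ denotes the tensor-product multiplication built from $\mu_\alpha$. Using the fact that $\alpha$ is a bialgebra endomorphism, i.e.\ $\alpha\circ\mu=\mu\circ(\alpha\otimes\alpha)$ and $\Delta\circ\alpha=(\alpha\otimes\alpha)\circ\Delta$, the left-hand side reduces to
\[
\Delta_\alpha(\mu_\alpha(x\otimes y)) = (\alpha^2\otimes\alpha^2)\circ\Delta(\mu(x\otimes y)) = \sum_{(x)(y)} \alpha^2(\mu(x^{(1)}\otimes y^{(1)}))\otimes \alpha^2(\mu(x^{(2)}\otimes y^{(2)})),
\]
after applying the classical compatibility of $\Delta$ with $\mu$ in the generalized bialgebra $V$. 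On the right-hand side, expanding $\Delta_\alpha(x) = \sum_{(x)}\alpha(x^{(1)})\otimes \alpha(x^{(2)})$ and similarly for $y$, and then applying $\mu_\alpha=\alpha\circ\mu$ componentwise, the same sum appears. So the two sides agree, proving (GB3). The only subtle point is keeping track of the two copies of $\alpha$ in $\mu_\alpha$ and $\Delta_\alpha$; this is where one must be careful, but once one writes $\mu_\alpha=\alpha\circ\mu$ and $\Delta_\alpha=\Delta\circ\alpha$ explicitly and pushes the $\alpha$'s through using the endomorphism hypothesis, the calculation is routine.

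For the second assertion, I would check separately that $f$ is a morphism of Hom-associative algebras and of Hom-coassociative coalgebras. The Hom-associative side is already covered by Theorem \ref{thmYauConstrHomAss}. The Hom-coassociative side is covered by Theorem \ref{thmConstrHomCoalg}. Since a morphism of generalized Hom-bialgebras is by definition nothing more than the conjunction of these two conditions, no further compatibility needs to be checked. The only mild obstacle is conceptual rather than computational: one must recognize that the unit/counit axioms do not enter, so the construction goes through entirely at the level of the multiplication and comultiplication, which is exactly why the proposition is stated in the generalized (nonunital, noncounital) setting.
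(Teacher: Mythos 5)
Your proposal is correct and follows essentially the same route as the paper: (GB1) and (GB2) are delegated to Theorems \ref{thmYauConstrHomAss} and \ref{thmConstrHomCoalg}, and (GB3) is verified by pushing the two copies of $\alpha$ through the classical compatibility of $\mu$ and $\Delta$, the only difference being that you work in Sweedler notation while the paper writes the same computation as an identity of composed maps with the flip $\Upsilon$. The treatment of the morphism statement via the same two theorems also matches the paper.
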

\begin{proof}
The condition GB1  follows from the theorem \ref{thmYauConstrHomAss} and GB2 from the theorem \ref{thmConstrHomCoalg}.
It remains to prove the compatibility condition (GB3).  The  condition may be written
$$ \Delta_\alpha
\circ \mu_\alpha=(\mu_\alpha\otimes\mu_\alpha)\circ \Upsilon \circ (\Delta_\alpha\otimes\Delta_\alpha)
$$
where $\Upsilon$ is the usual flip, $\Upsilon(x_1\otimes x_2\otimes x_3 \otimes x_4)=x_1\otimes x_3\otimes x_2 \otimes x_4$.
We have
\begin{align*}
 \Delta_\alpha\circ
 \mu_\alpha &= \Delta\circ\alpha\circ
 \alpha\circ\mu=(\alpha\otimes\alpha)\circ\Delta\circ\mu\circ
( \alpha\otimes  \alpha)\\
&=(\alpha\otimes\alpha)\circ((\mu\otimes\mu)\circ \Upsilon \circ (\Delta\otimes\Delta))\circ
( \alpha\otimes  \alpha)\\
&=((\alpha\circ\mu)\otimes(\alpha\circ\mu)\circ \Upsilon \circ( (\Delta\circ\alpha)\otimes\Delta\circ\alpha))\\
&=(\mu_\alpha\otimes\mu_\alpha)\circ \Upsilon \circ (\Delta_\alpha\otimes\Delta_\alpha)
\end{align*}
The second assertion follows also from theorems \ref{thmYauConstrHomAss} and  \ref{thmConstrHomCoalg}.
\end{proof}

Now, we provide some examples of generalized Hom-bialgebra.
\begin{example}
Let   $\K G$ be the group-algebra over the group $G$.
As a vector space, $\K G$ is generated by   $\{e_g : g \in
G\}$. If $\alpha:G\rightarrow G$ is a group homomorphism, then it can be extended to an algebra endomorphism of  $\K G$ by setting
$$\alpha (\sum_{g\in G}{a_g e_g})=\sum_{g\in G}{a_g \alpha (e_g)}.
$$
A structure of Hom-associative algebra over  $\K G$ was defined in \cite{Yau:HomolHomLie} using theorem \ref{thmYauConstrHomAss}. Consider the usual bialgebra structure on $\K G$ and $\alpha$  a generalized bialgebra morphism.  Then,  following the proposition \ref{propoGenHomBialg},  we define over $\K G$ a  generalized Hom-bialgebra $(\K G, \mu, \alpha,\Delta,\alpha)$  by setting:
$$\mu(e_g\otimes e_{g'})=\alpha (e_{g\cdot g'}),
$$
$$
\Delta \left( e_{g}\right) =\alpha(e_{g})\otimes \alpha(e_{g}).
$$
\end{example}

\begin{example} Consider the  polynomial algebra  $\A=\K [(X_{ij})]$ in variables $(X_{i j})_{i,j=1,\cdots,n}$. It carries a  structure of generalized bialgebra with the comultiplication  defined by
$ \delta (X_{i j})=\sum_{k=1}^{n}{X_{i k}\otimes X_{k j}}$ and  $\delta (1)=1\otimes 1$.
Let $\alpha$ be a generalized bialgebra morphism, it is defined by $n^2$ polynomials $\alpha (X_{i j})$.
We define a generalized Hom-bialgebra $(\A, \mu, \alpha,\Delta,\alpha)$ by
\begin{align*}
\mu(f\otimes g)&=f(\alpha (X_{1 1}),\cdots,\alpha (X_{n n}))g(\alpha (X_{1 1}),\cdots,\alpha (X_{n n})),\\
 \Delta (X_{i j})&=\sum_{k=1}^{n}{\alpha (X_{i k})\otimes \alpha (X_{k j})},\\
 \Delta (1)&=\alpha (1)\otimes \alpha (1).
\end{align*}
\end{example}

\begin{example}
Let $X$ be a set and consider the set of non-commutative
polynomials $\A=\K\langle X \rangle$. It carries  a generalized bialgebra structure with a
 comultiplication defined for  $ x\in X$
by  $\delta (x)=1\otimes x +x\otimes 1$ and  $\delta (1)=1\otimes 1$.
Let $\alpha$ be a generalized bialgebra morphism.
We define a generalized Hom-bialgebra $(\A, \mu, \alpha,\Delta,\alpha)$ by
\begin{align*}
\mu(f\otimes g)&=f(\alpha (X))g(\alpha (X)),\\
 \Delta (x)&=\alpha (1)\otimes \alpha (x) +\alpha (x)\otimes \alpha (1),\\
 \Delta (1)&=\alpha (1)\otimes \alpha (1).
\end{align*}

\end{example}

\begin{remark}The previous constructions show that Hom-associative algebra (resp.  the Hom-coassociative coalgebra and the generalized Hom-bialgebra) may be viewed as a deformation of the associative algebra (resp.  coalgebra and generalized bialgebra). We recover the first structure when the endomorphism $\alpha$ becomes the identity map.
\end{remark}

\section{Hom-Lie admissible Hom-Coalgebras}
\label{sec:HomLieadmisHomcoalg}
 Let $(V,\Delta, \beta )$ be a Hom-coalgebra
where $V$ is a vector space over
$\K$, $\Delta  : V\rightarrow V\otimes V$ and
$\beta : V\rightarrow V$ are linear maps and
$\Delta$  is not necessarily coassociative or
Hom-coassociative.

By a  \emph{$\beta$-coassociator} of $\Delta$ we
call a linear map $\textbf{c}_\beta (\Delta)$
defined by
$$\textbf{c}_\beta (\Delta):= \left(
\Delta \otimes \beta\right) \circ \Delta- \left(
\beta\otimes \Delta \right) \circ \Delta.
$$
Let $\mathcal{S}_3$ be the symmetric group of
order 3. Given $\sigma\in \mathcal{S}_3 $, we
define a linear map
$$\Phi _ \sigma \ : V^{\otimes 3}\longrightarrow V^{\otimes 3}
$$
by
$$\Phi _ \sigma (x_1\otimes x_2 \otimes x_3 )= x_{\sigma ^{-1}(1)}  \otimes x_{\sigma
^{-1}(2)}\otimes x_{\sigma ^{-1}(3)}.
$$
Recall that $\Delta^{op}=\tau\circ \Delta$ where
$\tau$ is the usual flip that is $\tau (x\otimes
y)= y \otimes x$.
\begin{definition} \label{def:HomLieadmissibleHomcoalg}
  A triple
$( V, \Delta, \beta)$   is a \emph{Hom-Lie
admissible Hom-coalgebra} if the linear map
$$\Delta_L : V\longrightarrow V\otimes V
$$
defined by $\Delta _L = \Delta - \Delta^{op} $,
is a Hom-Lie coalgebra multiplication, that is
the following condition is satisfied
\begin{eqnarray}\label{CoHomLie}
  \textbf{c}_\beta (\Delta_L)+\Phi_{(213)} \circ \textbf{c}_\beta (\Delta_L)+
   \Phi_{(231)} \circ \textbf{c}_\beta (\Delta_L) =
  0
\end{eqnarray}
where $(213)$ and $(231)$ are the two cyclic
permutations of order 3 in $\mathcal{S}_3$.
\end{definition}
\begin{remark}
 Since $\Delta _L = \Delta - \Delta^{op} $,
 the equality $ \Delta_L ^{op}= - \Delta_L$ holds.
 \end{remark}

\begin{lemma}
Let $(V,\Delta, \beta )$ be a Hom-coalgebra where
$\Delta  : V\rightarrow V\otimes V$ and $\beta :
V\rightarrow V$ are linear maps and $\Delta$  is
not necessarily coassociative or
Hom-coassociative, then the following relations
are true
\begin{eqnarray}
\textbf{c}_\beta (\Delta^{op})&=&-\Phi_{(13)}
\circ \textbf{c}_\beta
(\Delta) \\
(\beta \otimes \Delta^{op})\circ \Delta &=&
\Phi_{(13)} \circ (\Delta \otimes \beta) \circ
\Delta^{op}
 \\
 (\beta \otimes \Delta)\circ \Delta ^{op} &=& \Phi_{(13)} \circ
(\Delta ^{op} \otimes \beta) \circ \Delta
 \\
 (\Delta \otimes \beta)\circ \Delta ^{op} &=& \Phi_{(213)} \circ
( \beta\otimes \Delta ) \circ \Delta
 \\
 (\Delta  ^{op}\otimes \beta)\circ \Delta &=& \Phi_{(12)} \circ
( \Delta\otimes \beta ) \circ \Delta .
\end{eqnarray}
\end{lemma}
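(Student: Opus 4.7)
The plan is to verify each of the five identities by a direct computation on an arbitrary $x \in V$ using Sweedler-type notation, and then recognize each resulting expression in $V^{\otimes 3}$ as the image of the other side under the appropriate permutation $\Phi_\sigma$. Writing $\Delta(x) = \sum_{(x)} x^{(1)} \otimes x^{(2)}$ and $\Delta^{op}(x) = \sum_{(x)} x^{(2)} \otimes x^{(1)}$, and iterating as $\Delta(x^{(i)}) = \sum x^{(i)(1)} \otimes x^{(i)(2)}$, every term appearing in (1)--(5) becomes a sum of rank-three tensors in the iterated components $x^{(a)(b)}$ decorated by a single occurrence of $\beta$. The statement of each identity then reduces to the combinatorial assertion that two such sums agree after a specific permutation of the three slots.

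The two mechanical inputs I would use repeatedly are: the action rule $\Phi_\sigma(a_1 \otimes a_2 \otimes a_3) = a_{\sigma^{-1}(1)} \otimes a_{\sigma^{-1}(2)} \otimes a_{\sigma^{-1}(3)}$, and the equivalent reformulation $\Delta^{op} = \tau \circ \Delta$ with $\tau$ the flip on $V \otimes V$. The latter gives the useful bookkeeping identities $(\beta \otimes \Delta^{op}) = (\id \otimes \tau)(\beta \otimes \Delta)$ and $(\Delta^{op} \otimes \beta) = (\tau \otimes \id)(\Delta \otimes \beta)$, because $\beta$ occupies a single slot that is unaffected by an adjacent flip. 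Each $\Phi_{(ij)}$ on three slots acts as $\tau$ on the $i$-th and $j$-th slots and as identity on the remaining one, while $\Phi_{(213)}$ and $\Phi_{(231)}$ are the two cyclic rotations.

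First I would prove (1). Expanding both terms of $\mathbf{c}_\beta(\Delta^{op})(x)$ gives $x^{(2)(2)} \otimes x^{(2)(1)} \otimes \beta(x^{(1)}) - \beta(x^{(2)}) \otimes x^{(1)(2)} \otimes x^{(1)(1)}$, while $\mathbf{c}_\beta(\Delta)(x)$ equals $x^{(1)(1)} \otimes x^{(1)(2)} \otimes \beta(x^{(2)}) - \beta(x^{(1)}) \otimes x^{(2)(1)} \otimes x^{(2)(2)}$. Applying $-\Phi_{(13)}$ to the second expression swaps the first and third slots and changes the sign, yielding exactly the first expression. Identities (2)--(5) follow by the same template: compute each side as a sum of two (or one) elementary Sweedler tensor monomials, read off the permutation that sends one to the other from the positions of the indices, and check that this permutation matches the one asserted. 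For example, in (5), both sides equal expressions in $x^{(1)(1)}, x^{(1)(2)}, \beta(x^{(2)})$, and the two differ precisely by the flip of the first two slots, which is $\Phi_{(12)}$.

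The main obstacle is not conceptual but purely notational: one must keep careful track of which inner coproduct is being flipped when $\Delta^{op}$ appears on the inside versus the outside of another $\Delta$, since the resulting swap applies to different pairs of slots. I would mitigate this by explicitly labelling positions before applying any $\Phi_\sigma$, or, more abstractly, by decomposing each occurrence of $\Delta^{op}$ as $\tau \circ \Delta$ and then observing that $(\beta \otimes \tau)$, $(\tau \otimes \beta)$, and the composition of a flip with a cyclic rotation of three factors realize exactly the permutations $\Phi_{(13)}$, $\Phi_{(12)}$ and $\Phi_{(213)}$ appearing on the right-hand sides. Either way, all five identities follow by the same short, direct comparison.
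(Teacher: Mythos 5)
Your proposal is correct: all five identities do follow from the direct Sweedler computation you describe, and the paper itself states this lemma without proof, so element-wise verification is exactly what is called for. Your expansion of $\mathbf{c}_\beta(\Delta^{op})$ and $\mathbf{c}_\beta(\Delta)$ for identity (1) and your slot analysis for (5) are both right, and the convention $\Phi_\sigma(x_1\otimes x_2\otimes x_3)=x_{\sigma^{-1}(1)}\otimes x_{\sigma^{-1}(2)}\otimes x_{\sigma^{-1}(3)}$ together with reading $(213)$, $(231)$ as $3$-cycles makes (4) come out exactly as stated. One small caution about your closing remark: the inner flips $\id\otimes\tau$ and $\tau\otimes\id$ are $\Phi_{(23)}$ and $\Phi_{(12)}$ respectively, so the $\Phi_{(13)}$ appearing in (2) and (3) is not realized by a single adjacent flip but by the composite of such a flip with the $3$-cycle that the outer $\tau$ in $\Delta^{op}=\tau\circ\Delta$ produces when pushed past $\Delta\otimes\beta$ or $\beta\otimes\Delta$; this does not affect the direct computation that constitutes your actual proof.
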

\begin{lemma}
The $\beta$-coassociator of $\Delta_{L}$ is
expressed using $\Delta$ and $\Delta ^{op}$ as
follows:
\begin{eqnarray}
\label{coassDeltaLviaDeltaDeltaOp}
\textbf{c}_\beta (\Delta_{L})&=&\textbf{c}_\beta
(\Delta)+\textbf{c}_\beta (\Delta^{op})\\
\nonumber && -(\Delta \otimes \beta)\circ \Delta
^{op}-(\Delta  ^{op}\otimes \beta)\circ \Delta
+\\ \nonumber  && \Phi_{(13)} \circ (\Delta
\otimes \beta) \circ \Delta^{op}+\Phi_{(13)}
\circ ( \Delta ^{op} \otimes \beta )
\circ \Delta \\
\label{coassDeltaLviaDelta} &=&\textbf{c}_\beta
(\Delta)-\Phi_{(13)} \circ \textbf{c}_\beta (\Delta)\\
\nonumber && -\Phi_{(213)} \circ (\beta \otimes
\Delta)\circ \Delta -\Phi_{(12)} \circ (\Delta
\otimes \beta)\circ \Delta \\ \nonumber &&
+\Phi_{(23)} \circ (\beta \otimes \Delta) \circ
\Delta +\Phi_{(231)} \circ ( \Delta \otimes \beta
) \circ \Delta .
\end{eqnarray}
\end{lemma}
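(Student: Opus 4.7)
The plan is to establish both equalities by purely formal manipulation, using only the definition of the $\beta$-coassociator, the bilinearity of tensor products and composition, and the identities of the preceding lemma. No Hom-coassociativity is needed since the statement does not assume it.

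First I would prove \eqref{coassDeltaLviaDeltaDeltaOp}. Starting from
$$\textbf{c}_\beta(\Delta_L) \;=\; (\Delta_L\otimes\beta)\circ\Delta_L \;-\; (\beta\otimes\Delta_L)\circ\Delta_L,$$
I substitute $\Delta_L = \Delta - \Delta^{op}$ in each of the four slots. Bilinearity produces eight terms, which naturally split into two groups: the four terms with matching factors assemble into $\textbf{c}_\beta(\Delta) + \textbf{c}_\beta(\Delta^{op})$, while the four mixed cross terms are
$$-(\Delta\otimes\beta)\circ\Delta^{op}-(\Delta^{op}\otimes\beta)\circ\Delta+(\beta\otimes\Delta)\circ\Delta^{op}+(\beta\otimes\Delta^{op})\circ\Delta.$$
Applying the two identities
$(\beta\otimes\Delta)\circ\Delta^{op}=\Phi_{(13)}\circ(\Delta^{op}\otimes\beta)\circ\Delta$ and
$(\beta\otimes\Delta^{op})\circ\Delta=\Phi_{(13)}\circ(\Delta\otimes\beta)\circ\Delta^{op}$
from the preceding lemma to the last two of these cross terms gives exactly the right-hand side of \eqref{coassDeltaLviaDeltaDeltaOp}.

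Next I would derive \eqref{coassDeltaLviaDelta} from \eqref{coassDeltaLviaDeltaDeltaOp} by eliminating every occurrence of $\Delta^{op}$ using the same preceding lemma. Concretely, I substitute $\textbf{c}_\beta(\Delta^{op})=-\Phi_{(13)}\circ\textbf{c}_\beta(\Delta)$, $(\Delta\otimes\beta)\circ\Delta^{op}=\Phi_{(213)}\circ(\beta\otimes\Delta)\circ\Delta$, and $(\Delta^{op}\otimes\beta)\circ\Delta=\Phi_{(12)}\circ(\Delta\otimes\beta)\circ\Delta$. The two leftover terms $\Phi_{(13)}\circ(\Delta\otimes\beta)\circ\Delta^{op}$ and $\Phi_{(13)}\circ(\Delta^{op}\otimes\beta)\circ\Delta$ are handled by substituting again and using $\Phi_\sigma\circ\Phi_\tau=\Phi_{\sigma\tau}$, producing $\Phi_{(13)(213)}\circ(\beta\otimes\Delta)\circ\Delta$ and $\Phi_{(13)(12)}\circ(\Delta\otimes\beta)\circ\Delta$. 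These composite permutations simplify to the cycles $(23)$ and $(231)$ respectively, yielding the four $\Phi$-prefactored terms on the right of \eqref{coassDeltaLviaDelta}.

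Only one step involves any real risk of error: the bookkeeping for the permutation products $(13)(213)$ and $(13)(12)$ (and similarly checking that the coefficients cancel to exactly the six terms listed). This is the main obstacle in the sense of being error-prone rather than conceptually deep; I would verify it by tracing the action of each $\Phi_\sigma$ on a generic simple tensor $x_1\otimes x_2\otimes x_3$ according to the convention $\Phi_\sigma(x_1\otimes x_2\otimes x_3)=x_{\sigma^{-1}(1)}\otimes x_{\sigma^{-1}(2)}\otimes x_{\sigma^{-1}(3)}$, which gives a left $\mathcal{S}_3$-action and hence $\Phi_\sigma\circ\Phi_\tau=\Phi_{\sigma\tau}$. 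Once the two cycle identities are confirmed, \eqref{coassDeltaLviaDelta} follows immediately from \eqref{coassDeltaLviaDeltaDeltaOp} by collecting terms.
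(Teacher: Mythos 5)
Your proof is correct. The paper actually states this lemma without proof, and your argument --- expanding $\textbf{c}_\beta(\Delta_L)$ by bilinearity into $\textbf{c}_\beta(\Delta)+\textbf{c}_\beta(\Delta^{op})$ plus four cross terms, then eliminating $\Delta^{op}$ via the identities of the preceding lemma together with $\Phi_\sigma\circ\Phi_\tau=\Phi_{\sigma\tau}$ (and the correct products $(13)(213)=(23)$, $(13)(12)=(231)$) --- is exactly the computation the authors intend, matching the style of their proof of Proposition \ref{prop:coJacobiSumA3viaS3}.
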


\begin{proposition} \label{prop:coJacobiSumA3viaS3}
Let  $( V, \Delta, \beta)$ be  a Hom-coalgebra.
Then one has
\begin{equation} \label{coJacobiSumA3viaS3}
\textbf{c}_\beta (\Delta_L)+\Phi_{(213)} \circ
\textbf{c}_\beta (\Delta_L)+
   \Phi_{(231)} \circ \textbf{c}_\beta (\Delta_L) = 2 \sum_{\sigma \in \mathcal{S}_3} {(-1)^{\epsilon (\sigma)}\Phi
_\sigma \circ \textbf{c}_\beta (\Delta)}
\end{equation}
where $(-1)^{\epsilon (\sigma)}$ is the signature
of the permutation $\sigma$.
\end{proposition}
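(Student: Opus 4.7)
The plan is to reduce the statement to a bookkeeping exercise by substituting the expression for $\textbf{c}_\beta(\Delta_L)$ provided in equation \eqref{coassDeltaLviaDelta} of the preceding lemma. Writing $A:=(\Delta\otimes\beta)\circ\Delta$ and $B:=(\beta\otimes\Delta)\circ\Delta$, so that $\textbf{c}_\beta(\Delta)=A-B$, that lemma rearranges into the compact form
$$\textbf{c}_\beta(\Delta_L)=(\Phi_{\id}-\Phi_{(12)}-\Phi_{(13)}+\Phi_{(231)})\circ A\;-\;(\Phi_{\id}-\Phi_{(13)}-\Phi_{(23)}+\Phi_{(213)})\circ B,$$
after grouping the two occurrences of each of $A$ and $B$ hidden in $\textbf{c}_\beta(\Delta)$ and $\Phi_{(13)}\circ\textbf{c}_\beta(\Delta)$.

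Next I would apply $\Phi_{\id}+\Phi_{(213)}+\Phi_{(231)}$ to the above expression and expand using the elementary composition identity $\Phi_\sigma\circ\Phi_\tau=\Phi_{\sigma\tau}$, which is a direct consequence of the defining formula $\Phi_\sigma(x_1\otimes x_2\otimes x_3)=x_{\sigma^{-1}(1)}\otimes x_{\sigma^{-1}(2)}\otimes x_{\sigma^{-1}(3)}$. This turns the left-hand side into a sum of $24$ signed terms $\pm\Phi_\rho\circ X$ with $X\in\{A,B\}$ and $\rho\in\mathcal{S}_3$. The crucial structural observation is that $\{\Phi_{\id},\Phi_{(213)},\Phi_{(231)}\}$ corresponds to the alternating subgroup $A_3\subset\mathcal{S}_3$, and left multiplication by $A_3$ preserves parity; thus every product $\rho\kappa$ appearing in the expansion lies in the same $A_3$-coset as $\kappa$, and each element of a given coset is produced the same number of times by this multiplication.

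Collecting terms separately for $A$ and $B$ and tabulating the twelve required products $\{\id,(213),(231)\}\cdot\{\id,(12),(13),(23),(213),(231)\}$, the expected outcome is that the coefficient of $\Phi_\sigma\circ A$ equals $2(-1)^{\epsilon(\sigma)}$ and the coefficient of $\Phi_\sigma\circ B$ equals $-2(-1)^{\epsilon(\sigma)}$ for every $\sigma\in\mathcal{S}_3$. Assembling these, the left-hand side becomes
$$2\sum_{\sigma\in\mathcal{S}_3}(-1)^{\epsilon(\sigma)}\Phi_\sigma\circ(A-B)=2\sum_{\sigma\in\mathcal{S}_3}(-1)^{\epsilon(\sigma)}\Phi_\sigma\circ\textbf{c}_\beta(\Delta),$$
which is exactly \eqref{coJacobiSumA3viaS3}.

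The main obstacle is not conceptual but bookkeeping: one must fix the conventions for both the composition of permutations and the meaning of the cycle symbols $(213)$ and $(231)$, and then carry out the twelve coset-product computations without error. The overall factor of $2$ on the right-hand side arises naturally because, after the $A_3$-symmetrization, each of the six elements of $\mathcal{S}_3$ is hit exactly twice by the tabulation; no cancellation between $A$- and $B$-terms occurs because their signs across $\mathcal{S}_3$ differ by the overall sign of $A-B$.
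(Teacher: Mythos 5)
Your proposal is correct and follows essentially the same route as the paper: both substitute the expression \eqref{coassDeltaLviaDelta} for $\textbf{c}_\beta(\Delta_L)$, compose with $\Phi_{(213)}$ and $\Phi_{(231)}$ using $\Phi_\sigma\circ\Phi_\tau=\Phi_{\sigma\tau}$, and observe that each signed term $(-1)^{\epsilon(\sigma)}\Phi_\sigma\circ\textbf{c}_\beta(\Delta)$ occurs exactly twice. Your coset--parity counting (each $\rho$ in the signed four-term sums for $A=(\Delta\otimes\beta)\circ\Delta$ and $B=(\beta\otimes\Delta)\circ\Delta$ sweeps out its full $A_3$-coset, with the sign $(-1)^{\epsilon(\rho)}$ invariant under left multiplication by $A_3$) is a clean way to organize the explicit tabulation the paper carries out in \eqref{Phi(213)coassDeltaLviaDelta} and \eqref{Phi(231)coassDeltaLviaDelta}, but it is the same argument.
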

\begin{proof}
By \eqref{coassDeltaLviaDelta}  and
multiplication rules in the group $\mathcal{S}_3$, it
follows that
\begin{eqnarray} \nonumber \Phi_{(213)} \circ
\textbf{c}_\beta (\Delta_L) &=& \Phi_{(213)}
\circ \textbf{c}_\beta (\Delta)
-\Phi_{(213)}\circ \Phi_{(13)} \circ \textbf{c}_\beta (\Delta)\\
\nonumber && -\Phi_{(213)}\circ\Phi_{(213)} \circ
(\beta \otimes \Delta)\circ \Delta
-\Phi_{(213)}\circ\Phi_{(12)} \circ (\Delta
\otimes \beta)\circ \Delta \\ \nonumber &&
+\Phi_{(213)}\circ\Phi_{(23)} \circ (\beta
\otimes \Delta) \circ \Delta
+\Phi_{(213)}\circ\Phi_{(231)} \circ ( \Delta
\otimes \beta )
\circ \Delta  \\
\label{Phi(213)coassDeltaLviaDelta} &=&
\Phi_{(213)} \circ \textbf{c}_\beta (\Delta)
-\Phi_{(12)}\circ \textbf{c}_\beta (\Delta)\\
\nonumber && -\Phi_{(231)} \circ (\beta \otimes
\Delta)\circ \Delta -\Phi_{(23)} \circ (\Delta
\otimes \beta)\circ \Delta \\ \nonumber &&
+\Phi_{(13)} \circ (\beta \otimes \Delta) \circ
\Delta + ( \Delta
\otimes \beta ) \circ \Delta, \\
\nonumber \Phi_{(231)} \circ \textbf{c}_\beta
(\Delta_L) &=& \Phi_{(231)} \circ
\textbf{c}_\beta (\Delta)
-\Phi_{(231)}\circ \Phi_{(13)} \circ \textbf{c}_\beta (\Delta)\\
\nonumber && -\Phi_{(231)}\circ\Phi_{(213)} \circ
(\beta \otimes \Delta)\circ \Delta
-\Phi_{(231)}\circ\Phi_{(12)} \circ (\Delta
\otimes \beta)\circ \Delta \\ \nonumber &&
+\Phi_{(231)}\circ\Phi_{(23)} \circ (\beta
\otimes \Delta) \circ \Delta
+\Phi_{(231)}\circ\Phi_{(231)} \circ ( \Delta
\otimes \beta )
\circ \Delta  \\
\label{Phi(231)coassDeltaLviaDelta} &=&
\Phi_{(231)} \circ \textbf{c}_\beta (\Delta)
-\Phi_{(23)}\circ \textbf{c}_\beta (\Delta)\\
\nonumber && -(\beta \otimes \Delta)\circ \Delta
-\Phi_{(13)} \circ (\Delta \otimes \beta)\circ
\Delta \\ \nonumber && +\Phi_{(12)} \circ (\beta
\otimes \Delta) \circ \Delta + \Phi_{(213)} \circ
(\Delta \otimes \beta ) \circ \Delta.
\end{eqnarray}
After summing up the equalities
\eqref{coassDeltaLviaDelta},
\eqref{Phi(213)coassDeltaLviaDelta} and
\eqref{Phi(231)coassDeltaLviaDelta} the terms on
the right hand sides may be pairwise combined
into the terms of the form ${(-1)^{\epsilon
(\sigma)}\Phi _\sigma \circ \textbf{c}_\beta
(\Delta)}$ with each one being present in the sum
twice for all $\sigma \in \mathcal{S}_3$.
\end{proof}

Definition \ref{def:HomLieadmissibleHomcoalg}
together with \eqref{coJacobiSumA3viaS3} yields
the following corollary.

\begin{corollary}
A triple $( V, \Delta, \beta)$ is a Hom-Lie
admissible Hom-coalgebra if and only if
$$\sum_{\sigma \in \mathcal{S}_3}{(-1)^{\epsilon (\sigma)}\Phi
_\sigma \circ \textbf{c}_\beta (\Delta)}=0
$$
where $(-1)^{\epsilon (\sigma)}$ is the signature
of the permutation $\sigma$.
\end{corollary}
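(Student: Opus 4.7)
The plan is a short deduction combining the two already-proved statements. By Definition \ref{def:HomLieadmissibleHomcoalg}, the triple $(V,\Delta,\beta)$ is a Hom-Lie admissible Hom-coalgebra if and only if
$$\textbf{c}_\beta (\Delta_L)+\Phi_{(213)} \circ \textbf{c}_\beta (\Delta_L)+ \Phi_{(231)} \circ \textbf{c}_\beta (\Delta_L) = 0,$$
which is exactly the left-hand side of \eqref{coJacobiSumA3viaS3}. So the proof reduces to invoking Proposition \ref{prop:coJacobiSumA3viaS3}, which rewrites that left-hand side as $2\sum_{\sigma \in \mathcal{S}_3}{(-1)^{\epsilon (\sigma)}\Phi _\sigma \circ \textbf{c}_\beta (\Delta)}$.

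Thus I would proceed in a single step: substitute \eqref{coJacobiSumA3viaS3} into the condition of Definition \ref{def:HomLieadmissibleHomcoalg} to obtain the equivalence
$$(V,\Delta,\beta) \text{ is Hom-Lie admissible} \iff 2\sum_{\sigma \in \mathcal{S}_3}{(-1)^{\epsilon (\sigma)}\Phi _\sigma \circ \textbf{c}_\beta (\Delta)}=0.$$
Since by the standing convention of the paper $\K$ is a field of characteristic zero, the factor $2$ is invertible, and the vanishing of twice the alternating sum is equivalent to the vanishing of the alternating sum itself, yielding the stated criterion.

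There is essentially no obstacle: the hard computational work has already been carried out in Proposition \ref{prop:coJacobiSumA3viaS3}, where the six cross terms produced by $\Phi_{(213)}$ and $\Phi_{(231)}$ acting on the expansion \eqref{coassDeltaLviaDelta} were shown to combine pairwise into copies of $(-1)^{\epsilon (\sigma)}\Phi _\sigma \circ \textbf{c}_\beta (\Delta)$. The only point worth flagging explicitly in the write-up is the use of $\mathrm{char}(\K)=0$ to cancel the factor $2$, since without that hypothesis (for instance in characteristic two) the corollary as stated would be weaker than the defining condition.
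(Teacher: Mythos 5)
Your proposal is correct and is exactly the paper's route: the corollary is stated immediately after the remark that Definition \ref{def:HomLieadmissibleHomcoalg} together with \eqref{coJacobiSumA3viaS3} yields it, i.e.\ one substitutes the identity of Proposition \ref{prop:coJacobiSumA3viaS3} into the defining condition and cancels the factor $2$. Your explicit note that $\mathrm{char}(\K)=0$ is what licenses dividing by $2$ is a worthwhile clarification the paper leaves implicit.
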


Next we introduce the notion of
\emph{$G$-Hom-coalgebra} where $G$ is any subgroup
of the symmetric group $\mathcal{S}_3$.
\begin{definition}
Let $G$ be a subgroup of the symmetric group
$\mathcal{S}_3$, A Hom-coalgebra $( V, \Delta,
\beta)$ is called \emph{$G$-Hom-coalgebra} if
\begin{equation}\label{admi}
\sum_{\sigma \in G}{(-1)^{\epsilon (\sigma)}\Phi
_\sigma \circ \textbf{c}_\beta (\Delta)}=0
\end{equation}
where  $(-1)^{\varepsilon ({\sigma})}$ is the
signature of the permutation $\sigma$.
\end{definition}

\begin{proposition}
Let $G$ be a subgroup of the permutations group
$\mathcal{S}_3$. Then any $G$-Hom-Coalgebra $( V,
\Delta, \beta)$ is a Hom-Lie admissible
Hom-coalgebra.
\end{proposition}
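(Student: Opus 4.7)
The plan is to reduce the claim to the corollary that immediately precedes the definition of $G$-Hom-coalgebras, which characterizes Hom-Lie admissibility of $(V,\Delta,\beta)$ as the vanishing of the full signed sum $\sum_{\sigma \in \mathcal{S}_3}(-1)^{\epsilon(\sigma)}\Phi_\sigma \circ \textbf{c}_\beta(\Delta)$. So I only need to show that the $G$-Hom-coalgebra condition \eqref{admi} forces this full alternating sum over $\mathcal{S}_3$ to vanish.

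First I would observe the functoriality of $\Phi$: directly from the definition $\Phi_\sigma(x_1\otimes x_2\otimes x_3) = x_{\sigma^{-1}(1)}\otimes x_{\sigma^{-1}(2)}\otimes x_{\sigma^{-1}(3)}$ one checks that $\Phi_{\tau\sigma} = \Phi_\tau \circ \Phi_\sigma$ for all $\tau,\sigma \in \mathcal{S}_3$, while the signature is multiplicative, $(-1)^{\epsilon(\tau\sigma)} = (-1)^{\epsilon(\tau)}(-1)^{\epsilon(\sigma)}$. This is the only algebraic fact needed beyond the hypothesis.

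Next I would fix left coset representatives $\tau_1,\ldots,\tau_k$ with $k = |\mathcal{S}_3|/|G|$, so that $\mathcal{S}_3 = \bigsqcup_{i=1}^{k} \tau_i G$, and rewrite the full signed sum coset by coset:
$$
\sum_{\sigma \in \mathcal{S}_3}(-1)^{\epsilon(\sigma)}\Phi_\sigma \circ \textbf{c}_\beta(\Delta)
= \sum_{i=1}^{k}\sum_{\rho \in G}(-1)^{\epsilon(\tau_i\rho)}\Phi_{\tau_i\rho}\circ \textbf{c}_\beta(\Delta)
= \sum_{i=1}^{k}(-1)^{\epsilon(\tau_i)}\,\Phi_{\tau_i}\circ \Bigl(\sum_{\rho \in G}(-1)^{\epsilon(\rho)}\Phi_\rho \circ \textbf{c}_\beta(\Delta)\Bigr).
$$
By the $G$-Hom-coalgebra hypothesis \eqref{admi} the inner parenthesised sum is zero, hence so is the whole expression. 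Invoking the corollary, $(V,\Delta,\beta)$ is a Hom-Lie admissible Hom-coalgebra.

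There is no serious obstacle here; the only point requiring a little care is that $\Phi_{\tau_i}$ can be pulled out of the inner sum as a left composition, which is exactly the functoriality $\Phi_{\tau\sigma}=\Phi_\tau\circ\Phi_\sigma$ noted above, combined with the linearity of $\Phi_{\tau_i}$ on the space of linear maps $V \to V^{\otimes 3}$.
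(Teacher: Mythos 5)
Your proof is correct and is essentially the same argument the paper gives: decompose $\mathcal{S}_3$ into left cosets of $G$, use the multiplicativity of the signature and the identity $\Phi_{\tau\sigma}=\Phi_\tau\circ\Phi_\sigma$ to factor each coset's contribution through the vanishing $G$-sum, and conclude via the preceding corollary. Your write-up is in fact more explicit than the paper's (which loosely says ``conjugacy classes'' where it means cosets and leaves the factoring step implicit), but there is no substantive difference.
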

\begin{proof}
The skew-symmetry follows straightaway from the
definition. Take the set of conjugacy classes
$\{g G\}_{g\in I}$  where $I\subseteq G$, and for
any $\sigma_1, \sigma_2\in I,\sigma_1 \neq
\sigma_2 \Rightarrow \sigma_1 G\bigcap \sigma_1 G
=\emptyset$. Then

$$\sum_{\sigma\in \mathcal{S}_3}{{(-1)^{\epsilon (\sigma)}}\Phi _\sigma \circ
\textbf{c}_\beta (\Delta)}=\sum_{\sigma_1\in
I}{\sum_{\sigma_2\in \sigma_1 G}{(-1)^{\epsilon
(\sigma)}\Phi _\sigma \circ \textbf{c}_\beta
(\Delta)}}=0.$$
\end{proof}

The subgroups of $\mathcal{S}_3$ are
$$G_1=\{Id\}, ~G_2=\{Id,\tau_{1 2}\},~G_3=\{Id,\tau_{2
3}\},$$ $$~G_4=\{Id,\tau_{1 3}\},~G_5=A_3
,~G_6=\mathcal{S}_3,$$ where $A_3$ is the
alternating group and where $\tau_{ij}$ is the
transposition between $i$ and $j$.

We obtain the following type of
Hom-Lie-admissible Hom-coalgebras.
\begin{itemize}
\item The  $G_1$-Hom-coalgebras  are the Hom-associative
coalgebras defined above.

\item The  $G_2$-Hom-coalgebras satisfy the condition
\begin{equation}\nonumber
\textbf{c}_\beta (\Delta)+
\Phi_{(12)}\textbf{c}_\beta (\Delta)=0.
\end{equation}

\item The  $G_3$-Hom-coalgebras satisfy the condition
\begin{equation}\nonumber
\textbf{c}_\beta (\Delta)+
\Phi_{(23)}\textbf{c}_\beta (\Delta)=0.
\end{equation}

\item The  $G_4$-Hom-coalgebras satisfy the condition
\begin{equation}\nonumber
\textbf{c}_\beta (\Delta)+
\Phi_{(13)}\textbf{c}_\beta (\Delta)=0.
\end{equation}
\item The  $G_5$-Hom-coalgebras satisfy the condition
\begin{eqnarray}\nonumber
\textbf{c}_\beta (\Delta)+
\Phi_{(213)}\textbf{c}_\beta (\Delta)+
\Phi_{(231)}\textbf{c}_\beta (\Delta)=0.
\end{eqnarray}\nonumber
If the product $\mu$ is skew-symmetric then the
previous condition is exactly the Hom-Jacobi
identity.
\item The  $G_6$-Hom-coalgebras are the Hom-Lie-admissible
coalgebras.
\end{itemize}

The $G_2$-Hom-coalgebras may be called
Vinberg-Hom-coalgebra and $G_3$-Hom-coalgebras
may be called preLie-Hom-coalgebras.

\begin{definition}
 A triple $(V,
\Delta, \beta)$ consisting of a linear space $V$,
a linear map $\mu: V \rightarrow V\times V$ and a
homomorphism $\beta$  is called a  \emph{Vinberg-Hom-coalgebra} if it satisfies
\begin{equation}\nonumber
\textbf{c}_\beta (\Delta)+
\Phi_{(12)}\textbf{c}_\beta (\Delta)=0.
\end{equation}
and is called  a  \emph{preLie-Hom-coalgebra} if it satisfies
\begin{equation}\nonumber
\textbf{c}_\beta (\Delta)+
\Phi_{(23)}\textbf{c}_\beta (\Delta)=0.
\end{equation}
\end{definition}

More generally, by dualization we have a
correspondence between $G$-Hom-associative
algebras introduced in \cite{MS} and
$G$-Hom-coalgebras for a subgroup $G$ of
$\mathcal{S}_3$.

Let $G$ be a subgroup  of $\mathcal{S}_3$, and let
$(V, \mu, \alpha)$ be a $G$-Hom-associative
algebra that is $\mu : V\otimes V \rightarrow V$
and $ \alpha : V \rightarrow V$ are linear maps
and the following condition is satisfied
\begin{equation}\label{G_ass}
\sum_{\sigma\in G}{(-1)^{\varepsilon
({\sigma})}a_{\alpha,\mu}\circ \Phi_\sigma}=0.
\end{equation}
where $a_{\alpha,\mu}$ is the $\alpha$-associator
that is $a_{\alpha,\mu}= \mu \circ(\mu  \otimes
\alpha) -\mu\circ(\alpha\otimes \mu) $

Setting $$(\mu \otimes \alpha)_G =\sum_{\sigma\in
G}{(-1)^{\varepsilon ({\sigma})}(\mu \otimes
\alpha)\circ \Phi_\sigma}\  \text{ and } \
(\alpha \otimes \mu )_G =\sum_{\sigma\in
G}{(-1)^{\varepsilon ({\sigma})}(\alpha \otimes
\mu )\circ \Phi_\sigma}$$ the condition
\eqref{G_ass} is equivalent to the following
commutative diagram
$$
\begin{array}{ccc}
V\otimes V\otimes V & \stackrel{(\mu \otimes
\alpha)_G}{\longrightarrow } & V\otimes
V \\
\ \quad \left\downarrow ^{(\alpha \otimes \mu
)_G}\right. &  & \quad \left\downarrow
^\mu \right. \\
V\otimes V & \stackrel{\mu }{\longrightarrow } &
V
\end{array}
$$

By the dualization of the square one may obtain
the following commutative diagram
$$
\begin{array}{ccc}
 V & \stackrel{\Delta}{\longrightarrow
} & V\otimes
V \\
\ \quad \left\downarrow ^{ \Delta}\right. &  &
\quad \left\downarrow
^{(\beta\otimes\Delta)_G}  \right. \\
V\otimes V & \stackrel{(\Delta \otimes
\beta)_G}{\longrightarrow } & V\otimes V\otimes V
\end{array}
$$
where $$ (\beta\otimes\Delta)_G=\sum_{\sigma\in
G}{(-1)^{\varepsilon ({\sigma})}\Phi_\sigma \circ
(\beta \otimes\Delta )}  \ \text{ and }\ (\Delta
\otimes \beta)_G=\sum_{\sigma\in
G}{(-1)^{\varepsilon ({\sigma})}\Phi_\sigma \circ
(\Delta\otimes\beta)}.
$$

The previous commutative diagram expresses that
$(V, \Delta, \beta)$ is a $G$-Hom-coalgebra. More
precisely we have the following connection
between $G$-Hom-coalgebras and
$G$-Hom-associ\-ative algebras.

\begin{proposition}
Let  $\left( V,\Delta ,\beta \right) $ be a
$G$-Hom-coalgebra where $G$ is a subgroup of
$\mathcal{S}_3$. Its dual vector space $V^\star$
is provided with a $G$-Hom-associ\-ative algebra
$\left( V^\star,\Delta^\star ,\beta^\star \right)
$ where $\Delta^\star ,\beta^\star$ are the
transpose map.
\end{proposition}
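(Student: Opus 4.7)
The plan is to apply the transpose (dualization) functor to the commutative square that characterizes $G$-Hom-coassociativity (displayed immediately before the proposition) and verify that the resulting square is precisely the one that characterizes $G$-Hom-associativity.

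First I set $\mu := \Delta^\star$ and $\alpha := \beta^\star$. Under the canonical identification $(V \otimes V)^\star \cong V^\star \otimes V^\star$ (valid in the finite-dimensional setting, or by passing to the restricted dual in general), transposition is compatible with tensor products:
\begin{equation*}
(\Delta \otimes \beta)^\star = \mu \otimes \alpha, \qquad (\beta \otimes \Delta)^\star = \alpha \otimes \mu.
\end{equation*}
For the coordinate permutations on $V^{\otimes 3}$, a straightforward check on pure tensors gives $\Phi_\sigma^\star = \Phi_{\sigma^{-1}}$ on $(V^\star)^{\otimes 3}$.

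Next I translate the signed sums. Using $(A \circ B)^\star = B^\star \circ A^\star$, together with the fact that $G$ is closed under inversion and $(-1)^{\varepsilon(\sigma)} = (-1)^{\varepsilon(\sigma^{-1})}$, I compute
\begin{align*}
\bigl((\beta \otimes \Delta)_G\bigr)^\star
 &= \sum_{\sigma \in G} (-1)^{\varepsilon(\sigma)}\, (\alpha \otimes \mu) \circ \Phi_{\sigma^{-1}}
 = (\alpha \otimes \mu)_G,
\end{align*}
and similarly $\bigl((\Delta \otimes \beta)_G\bigr)^\star = (\mu \otimes \alpha)_G$. Applying the transpose functor to the commutative square that expresses the $G$-Hom-coassociativity of $(V,\Delta,\beta)$ therefore yields exactly the commutative square
\begin{equation*}
\begin{array}{ccc}
V^\star \otimes V^\star \otimes V^\star & \stackrel{(\mu \otimes \alpha)_G}{\longrightarrow} & V^\star \otimes V^\star \\
\left\downarrow{(\alpha \otimes \mu)_G}\right. & & \left\downarrow{\mu}\right. \\
V^\star \otimes V^\star & \stackrel{\mu}{\longrightarrow} & V^\star
\end{array}
\end{equation*}
which is the defining diagram for a $G$-Hom-associative algebra structure on $V^\star$.

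The step I expect to do most of the work is the bookkeeping of the identification $\Phi_\sigma^\star = \Phi_{\sigma^{-1}}$ and the resulting re-indexing of the signed sum, together with the (routine but easy to slip on) fact that transposition reverses composition, so the order of factors in each signed sum must be swapped before re-indexing $\sigma \mapsto \sigma^{-1}$ using that $G$ is a subgroup. Everything else reduces to the formal fact that the transpose functor is contravariant and monoidal on finite-dimensional (or suitably restricted) spaces, so that commutative diagrams of linear maps go to commutative diagrams with arrows reversed.
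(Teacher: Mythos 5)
Your proof is correct, and at bottom it is the same dualization argument the paper gives, just packaged functorially instead of element-wise. The paper sets $\mu:=\Delta^{\star}\circ\lambda_{2}$ and $\alpha:=\beta^{\star}$ using the canonical injections $\lambda_{n}:(V^{\star})^{\otimes n}\rightarrow (V^{\otimes n})^{\star}$, computes $\mu\circ(\mu\otimes\alpha)$ and $\mu\circ(\alpha\otimes\mu)$ against the pairing, and concludes with the single identity expressing the $\alpha$-associator of $\mu$ as $\mu_{\K}\circ(\mathrm{id}\otimes\mu_{\K})\circ\lambda_{3}(f_{1}\otimes f_{2}\otimes f_{3})$ composed with the $\beta$-coassociator $\textbf{c}_{\beta}(\Delta)$; the passage from that identity to the full signed sum over $G$ is then asserted rather than carried out. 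Your version dualizes the defining commutative square directly, and in doing so makes explicit precisely the two facts the paper leaves implicit: that $\Phi_{\sigma}^{\star}=\Phi_{\sigma^{-1}}$ on $(V^{\star})^{\otimes 3}$, and that the re-indexing $\sigma\mapsto\sigma^{-1}$ of the signed sum is legitimate because $G$ is a subgroup and the signature is invariant under inversion. So your write-up is, if anything, more complete on the one point that distinguishes a general $G$ from $G=\{\mathrm{Id}\}$. One small remark: the identification $(V\otimes V)^{\star}\cong V^{\star}\otimes V^{\star}$ (hence finite-dimensionality or a restricted dual) is not actually needed for this direction; the canonical injection suffices, since all four maps in your transposed square are obtained by restricting the dual maps along $\lambda_{n}$, and the required equality of maps $(V^{\star})^{\otimes 3}\rightarrow V^{\star}$ follows by evaluating on elements of $V$ and invoking the coalgebra identity. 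This is consistent with the proposition carrying no finiteness hypothesis, in contrast with its converse in the paper, which does.
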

\begin{proof}
Let  $\left( V,\Delta , \beta \right) $ be a
$G$-Hom-coalgebra, and let $V^{\star}$ be the dual
space of $V$, that is $V^{\star}=Hom(V,\K)$.

Consider the map
$$\lambda_n : (V^{\star})^{\otimes n}\longrightarrow (V^{\star})^{\otimes n}
$$
$$ f_1\otimes \cdots \otimes f_n \longrightarrow  \lambda_n (f_1\otimes \cdots \otimes f_n)$$
such that for $v_1 \otimes \cdots \otimes  v_n\in
V^{\otimes n}$
$$\lambda_n (f_1\otimes \cdots \otimes f_n)(v_1 \otimes \cdots
\otimes v_n)=f_1(v_1) \otimes \cdots \otimes
f_n(v_n)$$
and set $$ \mu :=\Delta^{\star}\circ \lambda_2
\quad \quad \alpha :=
 \beta ^{ \ast}$$
where the star $\star$ denotes the transpose
linear map. Then, the quadruple $\left(
V^{\star},\mu ,\eta , \alpha\right) $ is a
 $G$-Hom-associative algebra. Indeed,  $\mu (f_1,f_2)=\mu_\K \circ
 \lambda_2 (f_1\otimes f_2) \circ \Delta $ where $\mu_\K$ is the multiplication of $\K$ and $f_1,f_2 \in
 V^{\ast}$. One has
\begin{eqnarray}\mu \circ ( \mu \otimes \alpha) (f_1 \otimes f_2
\otimes f_3 )&=& \mu
 (\mu (f_1 \otimes f_2) \otimes \alpha (f_3)) \nonumber \\ &=&\mu_\K \circ \lambda_2(\mu (f_1 \otimes f_2) \otimes \alpha
 (f_3))\circ \Delta \nonumber \\ &=& \mu_\K \circ \lambda_2 (\lambda_2( (f_1\otimes f_2) \circ
 \Delta)\otimes \alpha
 (f_3))\circ \Delta \nonumber  \\ &=&
 \mu_\K \circ (\mu_\K \otimes id ) \circ \lambda_3 (f_1 \otimes f_2
 \otimes f_3) \circ (\Delta \otimes \beta )\circ
 \Delta . \nonumber
\end{eqnarray}
Similarly
$$\mu \circ ( \alpha \otimes \mu ) (f_1 \otimes f_2
\otimes f_3 )= \mu_\K \circ ( id\otimes  \mu_\K)
\circ \lambda_3 (f_1 \otimes f_2 \otimes f_3)
\circ ( \beta\otimes\Delta  )\circ \Delta.$$
 Using the associativity and the commutativity of $\mu_\K$, the
$\alpha$-associator may be written as
 $$a_{\alpha , \mu}=\mu_\K \circ ( id\otimes  \mu_\K) \circ \lambda_3 (f_1
\otimes f_2
 \otimes f_3)\circ(  ( \Delta\otimes\beta  )\circ
 \Delta- ( \beta\otimes\Delta  )\circ
 \Delta).
 $$

 Then we have the following connection between the
 $\alpha$-associator and $\beta$-coassociator
$$a_{\alpha , \mu}=\mu_\K \circ ( id\otimes  \mu_\K) \circ \lambda_3 (f_1
\otimes f_2
 \otimes f_3)\circ \textbf{c}_\beta (\Delta).
 $$
 Therefore if $\left( V,\Delta ,\beta \right) $ is a $G$-Hom-coalgebra, then the   $\left(
V^\star,\Delta^\star ,\beta^\star \right) $ is a
$G$-Hom-associative algebra.

\end{proof}

\begin{proposition}
Let  $\left( V,\mu ,\alpha \right) $ be a finite
dimensional $G$-Hom-associative algebra where $G$
is a subgroup of $\mathcal{S}_3$. Its dual vector
space $V^\star$ is provided with a
$G$-Hom-coalgebra  $\left( V^\star,\mu^\star
,\alpha^\star \right) $, where $\mu^\star
,\alpha^\star$ are the transpose map.
 \end{proposition}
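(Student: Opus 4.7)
The plan is to reverse the argument of the preceding proposition, using finite dimensionality to identify $(V\otimes V)^\star$ with $V^\star\otimes V^\star$ and $V^{\star\star}$ with $V$ canonically. First I would define the comultiplication on $V^\star$ by $\Delta := \mu^\star$ (post-composed with the canonical isomorphism $(V\otimes V)^\star \cong V^\star\otimes V^\star$, where $\lambda_2$ is used to pair pure tensors) and set $\beta := \alpha^\star$. The triple $(V^\star,\Delta,\beta)$ is then a Hom-coalgebra in the sense of the definition, and the only thing to verify is the $G$-Hom-coalgebra condition \eqref{admi}.

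The key step is to establish the duality identity
\[
\mathbf{c}_{\alpha^\star}(\mu^\star) \;=\; \bigl(a_{\alpha,\mu}\bigr)^\star
\]
under the canonical identification $(V^{\otimes 3})^\star \cong (V^\star)^{\otimes 3}$. This is exactly the formula
\[
 a_{\alpha,\mu}=\mu_\K \circ (\id\otimes \mu_\K)\circ \lambda_3(f_1\otimes f_2\otimes f_3)\circ \mathbf{c}_\beta(\Delta)
\]
derived in the proof of the preceding proposition, now read in the opposite direction: starting from $\mu$ and $\alpha$ on $V$ and transposing, the composition of $\Delta\otimes\beta$ and $\beta\otimes\Delta$ with $\Delta$ on $V^\star$ is precisely the transpose of the compositions appearing in $a_{\alpha,\mu}$.

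Next I would verify that the transpose of the permutation operator $\Phi_\sigma$ on $V^{\otimes 3}$, translated through $\lambda_3$ to an operator on $(V^\star)^{\otimes 3}$, equals $\Phi_{\sigma^{-1}}$. This is the standard fact that the contragredient representation of $\mathcal S_3$ on the dual sends a permutation to its inverse. Combining this with the previous identity yields
\[
\sum_{\sigma\in G}(-1)^{\epsilon(\sigma)}\,\Phi_\sigma\circ \mathbf{c}_\beta(\Delta)
\;=\;\Bigl(\sum_{\sigma\in G}(-1)^{\epsilon(\sigma)}\,a_{\alpha,\mu}\circ\Phi_{\sigma^{-1}}\Bigr)^{\!\star}.
\]
Since $G$ is a subgroup of $\mathcal S_3$ we have $G=G^{-1}$, and since $\epsilon(\sigma)=\epsilon(\sigma^{-1})$, the inner sum is precisely the $G$-Hom-associativity condition \eqref{G_ass} assumed for $(V,\mu,\alpha)$, hence vanishes. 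Therefore $(V^\star,\mu^\star,\alpha^\star)$ satisfies \eqref{admi}, proving that it is a $G$-Hom-coalgebra.

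The main obstacle I expect is bookkeeping: one must keep straight whether the transpose of $\Phi_\sigma$ is $\Phi_\sigma$ or $\Phi_{\sigma^{-1}}$ under the chosen identification, and verify that signatures are preserved under this inversion. Once these combinatorial identifications are pinned down, the remainder is purely formal dualization of the commutative square characterizing $G$-Hom-associativity into the commutative square characterizing $G$-Hom-coassociativity, exactly mirroring the diagrammatic argument given just before the proposition.
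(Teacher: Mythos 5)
Your proof is correct, but it takes a different route from the paper's. The paper fixes a basis $\{e_i\}$ of $V$, writes $\mu(e_i\otimes e_j)=\sum_k C_{ij}^k e_k$ and $\alpha(e_i)=\sum_k\alpha_i^k e_k$, expresses the $G$-Hom-coassociativity condition \eqref{admi} for $\Delta=\mu^\star$ on the dual basis as an explicit system of equations in the structure constants, and then observes that this system is exactly the condition \eqref{G_ass} for $\mu$ after the index relabeling $p=\sigma(p')$, $q=\sigma(q')$, $s=\sigma(s')$. Your argument is the coordinate-free version of the same duality: the identity $\textbf{c}_{\alpha^\star}(\mu^\star)=(a_{\alpha,\mu})^\star$ under the finite-dimensional identifications $(V\otimes V)^\star\cong V^\star\otimes V^\star$ and $(V^{\otimes 3})^\star\cong (V^\star)^{\otimes 3}$ is immediate from $(\mu\otimes\alpha)^\star=\mu^\star\otimes\alpha^\star$, and your computation that the permutation operator on the dual transposes to $\Phi_{\sigma^{-1}}$ is right (one checks $\Phi_\sigma^\star(f_1\otimes f_2\otimes f_3)=f_{\sigma(1)}\otimes f_{\sigma(2)}\otimes f_{\sigma(3)}=\Phi_{\sigma^{-1}}(f_1\otimes f_2\otimes f_3)$); this is precisely the abstract counterpart of the paper's index relabeling. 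Your closing step, using $G=G^{-1}$ and $\epsilon(\sigma)=\epsilon(\sigma^{-1})$ to convert the transposed sum back into \eqref{G_ass}, is the point the paper handles only implicitly through that relabeling, and making it explicit is a genuine clarification. What your approach buys is transparency about exactly which identifications are used and why the sum over $G$ is stable under the dualization; what the paper's approach buys is a completely explicit description of the dual structure constants, which is useful if one actually wants to compute $\Delta$ on examples.
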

\begin{proof}
Let $\A=\left( V,\mu,\alpha  \right) $ be a
$n$-dimensional Hom-associative algebra ($n$
finite). Let $\{e_1,\cdots,e_n\}$ be a basis of
$V$ and $\{e^*_1,\cdots,e^*_n\}$ be the dual
basis. Then $\{e^*_i \otimes e^*_j\}_{i,j}$ is a
basis of $\A^\star \otimes \A^\star$. The
comultiplication $\Delta=\mu^\star$ on  $\A^\ast$
is defined for $f\in \A^\star$ by
$$\Delta (f)=\sum_{i,j=1}^{n}{f(\mu(e_i \otimes e_j))\ e^*_i \otimes e^*_j }
$$
Setting $\mu(e_i \otimes e_j)=\sum_{k=1}^n
{C_{ij}^k e_k}$ and $\alpha(e_i)=\sum_{k=1}^n
{\alpha_{i}^k e_k}$, then $\Delta
(e^*_k)=\sum_{i,j=1}^n{C_{ij}^k\ e^*_i \otimes
e^*_j } $ and $\beta(e_i)=\alpha^\star(e_i)
=\sum_{k=1}^n {\alpha_{k}^i e_k}$.

The condition \eqref{admi} of
$G$-Hom-coassociativity of  $\Delta$, applied to
any element $e^*_k$ of the basis, is equivalent
to
$$\sum_{p,q,s=1}^{n}{\sum_{\sigma\in G}{(-1)^{\epsilon (\sigma)} (\sum_{i,j=1}^{n}{\alpha_s^j C_{ij}^k C_{pq}^i -
\alpha _p^i C_{ij}^k C_{qs}^j})}e^*_{\sigma
^{-1}(p)} \otimes e^*_{\sigma ^{-1}(q)}\otimes
e^*_{\sigma {-1}(s)}}=0
$$
Therefore $\Delta$ is $G$-Hom-coassociative if
for  any $p,q,s,k\in\{1,\cdots, n\}$ one has
$$\sum_{\sigma\in G}{(-1)^{\epsilon (\sigma)} (\sum_{i,j=1}^{n}{\alpha_s^j C_{ij}^k C_{pq}^i -
\alpha _p^i C_{ij}^k C_{qs}^j})}=0$$ The previous
system is exactly the condition \eqref{G_ass} of
$G$-Hom-associativity of $\mu$, written on
$e_{p'}\otimes e_{q'}\otimes e_{s'}$ and setting
$p=\sigma (p'),\ q=\sigma (q'),\ s=\sigma (s')$.
\end{proof}
\begin{corollary}
The dual vector space of a Hom-coassociative
coalgebra $\left( V,\Delta ,\beta,\varepsilon
\right) $ is a  Hom-associative algebra $\left(
V^\star,\Delta^\star
,\beta^\star,\varepsilon^\star \right) $, where
$V^\star$ is the dual vector space and the star
for the linear maps denotes the transpose map.
The dual vector space of finite-dimensional
Hom-associative algebra is a Hom-coassociative
coalgebra.
\end{corollary}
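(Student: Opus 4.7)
The plan is to observe that this corollary is essentially the specialization of the two preceding propositions to the subgroup $G_1=\{Id\}$, combined with a straightforward dualization of the (co)unit axioms. So I would not reprove the (co)associativity part from scratch — the two propositions already do all the work once one notes that a Hom-coassociative coalgebra is exactly a $G_1$-Hom-coalgebra (since $\textbf{c}_\beta(\Delta)=0$ is \eqref{admi} for $G=\{Id\}$) and a Hom-associative algebra is exactly a $G_1$-Hom-associative algebra.

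More concretely, first I would state that by the preceding proposition applied with $G=G_1$, the transpose $\Delta^\star$ on $V^\star$ equips it with a Hom-associative multiplication whose twisting map is $\beta^\star$; and in the finite-dimensional case, the second proposition applied with $G=G_1$ gives that the transpose $\mu^\star$ on $V^\star$ is a Hom-coassociative comultiplication with twisting map $\alpha^\star$. This handles the bare (co)algebra part of the corollary.

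Next I would handle the counit/unit correspondence. The counit axiom $(id\otimes\varepsilon)\circ\Delta=id=(\varepsilon\otimes id)\circ\Delta$ becomes, upon taking transposes and using the canonical isomorphisms $V^\star\otimes\K\cong V^\star\cong\K\otimes V^\star$, the identities $\Delta^\star\circ(id\otimes\varepsilon^\star)=id=\Delta^\star\circ(\varepsilon^\star\otimes id)$. Setting $\eta:=\varepsilon^\star:\K\to V^\star$ (using $\K^\star\cong\K$), this is exactly the unit axiom making $(V^\star,\Delta^\star,\beta^\star,\varepsilon^\star)$ unital, with $e_1=\varepsilon^\star(1)$ the unit element. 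The same dualization run in the opposite direction, in the finite-dimensional case where $(V^\star)^\star\cong V$ canonically, turns a unit $\eta:\K\to V$ into a counit $\eta^\star:V^\star\to\K$ for $\mu^\star$.

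The only mild subtlety — which I do not expect to be a real obstacle — is bookkeeping with the canonical isomorphisms $\K\otimes V^\star\cong V^\star\cong V^\star\otimes\K$ and with the identification $(V\otimes V)^\star\cong V^\star\otimes V^\star$ that requires finite-dimensionality for the second statement of the corollary (for the first statement it is not needed since one only needs the map $V^\star\otimes V^\star\to(V\otimes V)^\star$ to compose $\Delta^\star$ with pure tensors, which is well-defined in arbitrary dimension). I would briefly flag this finite-dimensional hypothesis where used and then conclude by citing the two propositions.
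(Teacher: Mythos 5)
Your proposal takes exactly the paper's route: the paper's entire proof of this corollary is the one-line remark that it is the particular case $G=G_1$ of the two preceding propositions, which is precisely your main step. You in fact go further than the paper by explicitly dualizing the unit/counit axioms (identifying $\eta$ with $\varepsilon^\star$ via $\K^\star\cong\K$) and by flagging where finite-dimensionality is needed for $(V\otimes V)^\star\cong V^\star\otimes V^\star$ — both points the paper leaves implicit — so your write-up is, if anything, more complete than the original.
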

\begin{proof}It is a particular case of the previous Propositions ($G=G_1$).
\end{proof}

\begin{remark} Let $V$ be a finite-dimensional $\K$-vector space.
If $H=(V,\mu ,\alpha,\eta ,\Delta ,
\beta,\varepsilon ,S)$ is a Hom-Hopf algebra,
then
$$H^*=(V^*,\Delta^* ,\beta^*,\varepsilon^*, \mu^*
, \alpha^*,\eta^* , S^*)$$ is also a Hom-Hopf
algebra.
\end{remark}

\begin{remark}
An earlier version of this article has appeared as a Preprint in Mathematical Sciences in Lund University, Center for Mathematical Sciences in 2008. The results in this paper were also presented in the talks given by the authors at the European Science Foundation Mathematics Conference "Algebraic Aspects in Geometry",
Mathematical Research and Conference Center,
Bedlewo, Poland, October 2007; AGMF Baltic-Nordic network conferences, G{\"o}teborg, Sweden, October 2007 and
Tartu, Estonia, October 2008;
International Conference on Noncommutative Rings and Geometry, Almeria, Spain, September 2007 and Seminar Sophus Lie XXXV conference, Budapest, Hungary, March, 2008.
\end{remark}

\newpage

\end{document}